\def\today{\number\day\space\ifcase\month\or   January\or February\or
   March\or April\or May\or June\or   July\or August\or September\or
   October\or November\or December\fi\   \number\year}
\theoremstyle{definition}
\newtheorem{thm}{Theorem}[section]
\newtheorem{lemma}[thm]{Lemma}
\newtheorem{prop}[thm]{Proposition}
\newtheorem{df}[thm]{Definition}
\newtheorem{cor}[thm]{Corollary}
\newtheorem{cnj}[thm]{Conjecture}
\newtheorem{rem}[thm]{Remark}
\newtheorem{eg}[thm]{Example}
\newtheorem{pbm}[thm]{Problem}
\newtheorem{qst}[thm]{Question}
\newcommand{\beq}{\begin{equation}}
\newcommand{\eeq}{\end{equation}}
\newcommand{\beqa}{\begin{eqnarray*}}
\newcommand{\eeqa}{\end{eqnarray*}}
\newcommand{\bal}{\begin{align*}}
\newcommand{\eal}{\end{align*}}
\newcommand{\bi}{\begin{itemize}}
\newcommand{\ei}{\end{itemize}}
\newcommand{\be}{\begin{enumerate}}
\newcommand{\ee}{\end{enumerate}}
\newcommand{\bigast}{\scalebox{1.5}{$\ast$}}
\newcommand{\ep}{\varepsilon}
\newcommand{\zt}{\zeta}
\newcommand{\Z}{{\mathbb{Z}}}
\newcommand{\C}{{\mathbb{C}}}
\newcommand{\N}{{\mathbb{N}}}
\newcommand{\K}{{\mathcal{K}}}
\newcommand{\B}{{\mathcal{B}}}
\newcommand{\U}{{\mathcal{U}}}
\newcommand{\T}{{\mathbb{T}}}
\newcommand{\Ot}{{\mathcal{O}_2}}
\newcommand{\OI}{{\mathcal{O}_{\I}}}
\newcommand{\id}{{\mathrm{id}}}
\newcommand{\diag}{{\mathrm{diag}}}
\newcommand{\Aut}{{\mathrm{Aut}}}
\newcommand{\Ad}{{\mathrm{Ad}}}
\newcommand{\dimRok}{{\mathrm{dim}_\mathrm{Rok}}}
\newcommand{\cdimRok}{{\mathrm{dim}_\mathrm{Rok}^\mathrm{c}}}
\newcommand{\tfae}{the following are equivalent}
\newcommand{\ifo}{if and only if }
\newcommand{\ca}{$C^*$-algebra}
\newcommand{\uca}{unital $C^*$-algebra}
\newcommand{\hm}{homomorphism}
\newcommand{\fd}{finite dimensional}
\newcommand{\Rp}{Rokhlin property}
\newcommand{\Rdim}{Rokhlin dimension}
\newcommand{\I}{\infty}
\title[]{Rokhlin dimension for compact group actions}
\author{Eusebio Gardella}
\date{\today}
\thanks{This material is based upon work supported by the
  US National Science Foundation through Grant
DMS-1101742; the Johnson Fellowship at the University of Oregon;
and by the the Deutsche Forschungsgemeinschaft
(SFB 878). These sources of financial support are
gratefully acknowledged.}
\address{Department of Mathematics, University  of Oregon,
      Eugene OR 97403-1222, USA.}
\email[]{gardella@uoregon.edu}
\subjclass[2000]{Primary 46L55, 37B05; Secondary 46L35}
\keywords{Group actions, compact Lie group, Rokhlin dimension, crossed product, equivariant $K$-theory}
\address{Fields Institute for Research in Mathematical Sciences,
222 College Street, Toronto ON M5T 3J1, Canada.}
\email[]{egardell@fields.utoronto.ca}
\begin{document}

\begin{abstract} We introduce and systematically study the notion of Rokhlin dimension (with and without commuting
towers)
for compact group actions on $C^*$-algebras. This notion generalizes the one introduced by Hirshberg, Winter
and Zacharias for finite groups, and contains the Rokhlin property as the zero dimensional case.
We show, by means of an example, that commuting towers cannot always be arranged, even in the absence of $K$-theoretic
obstructions. For a compact Lie group
action on a compact Hausdorff space, freeness is equivalent to finite Rokhlin dimension of the induced action. 
We compare the notion of finite Rokhlin dimension to other existing definitions of noncommutative freeness
for compact group actions. We obtain further $K$-theoretic obstructions to having an action of
a non-finite compact Lie group with finite Rokhlin dimension with commuting towers, and use them to confirm
a conjecture of Phillips. 
\end{abstract}

\maketitle

\tableofcontents

\section{Introduction}

The study of group actions on \ca s, as well as their associated crossed products, has been the object of
very intensive research since the early beginnings of the operator algebra theory. Both in the von Neumann algebra
and in the \ca\ case, crossed products have provided a great deal of highly nontrivial examples via a construction
that combines the dynamical properties of the action together with the structural properties of the underlying
algebra. \\
\indent A crucial result in the context of measurable dynamics is the Rokhlin Lemma, which asserts that an
aperiodic measure preserving action of $\Z$ can be ``approximated'', in a suitable sense, by finite cyclic shifts.
Its reformulation in terms of outer automorphisms of (commutative) von Neumann algebras using partitions of unity
consisting of orthogonal projections has led to a number of versions of the Rokhlin property, both in the von Neumann
algebra and in the \ca\ case. See \cite{kishimoto autom UHF} and \cite{izumi automorphisms} for integer actions, and see
\cite{herman jones}, \cite{izumi finite group actions I}, \cite{izumi finite group actions II} and \cite{osaka phillips}
for the finite group case.\\
\indent On the side of topological dynamics, the notion of freeness of a group action is central. Recall that an
action of a group $G$ on a space $X$ is said to be \emph{free} if no non-trivial group element
of $G$ acts with fixed points. Viewing \ca s as noncommutative topological spaces, it is natural to look for
generalizations of the concept of freeness to the case of group actions on \ca s. It turns out that there is not
a single version of noncommutative freeness. The book \cite{phillips equivariant k-theory book} provides a detailed
presentation and comparison of a number of them, mainly for compact Lie groups, including (locally) discrete
$K$-theory, (total) $K$-freeness, (hereditary) saturation, and others. The more recent survey article
\cite{phillips survey}, which considers mostly finite groups, incorporates other notions that have been intensively
studied in the past two decades: the Rokhlin property and the tracial Rokhlin property. We refer the reader to the
introduction of \cite{phillips survey} for a motivation of the study of free actions on \ca s.\\
\indent More recently, Hirshberg, Winter and Zacharias introduced in \cite{HWZ} the notion of finite Rokhlin dimension
for finite group actions (as well as automorphisms), as a generalization of the Rokhlin property. This more general
notion has the Rokhlin property as its zero dimensional case, and moreover has the advantage of not requiring the
existence of projections in the underlying algebra. Finite Rokhlin dimension is in particular much more common than
the Rokhlin property. \\
\indent The paper \cite{HP} consists of a further study of finite Rokhlin dimension, where the authors extend the
notion to the non-unital setting, and also derive some $K$-theoretical obstructions in the commuting tower version.
These obstructions are used to show that no non-trivial finite group acts with finite Rokhlin dimension with commuting
towers on either the Jiang-Su algebra
$\mathcal{Z}$, or the Cuntz algebra $\mathcal{O}_\I$. There, Hirshberg and Phillips introduce the notion of $X$-Rokhlin
property for an action of a compact Lie group $G$ and a compact free $G$-space $X$. They show that when $G$ is finite and $X$
is finite dimensional, the $X$-Rokhlin property is equivalent to having finite Rokhlin dimension with commuting
towers in the sense of Hirshberg-Winter-Zacharias. \\
\indent Our work develops the concept of Rokhlin dimension for compact group actions on \uca s, generalizing the
case of finite group actions of \cite{HWZ}, the Rokhlin property in the compact group case as in \cite{hirshberg winter},
and including the $X$-Rokhlin property from \cite{HP}, which is shown to be
equivalent to finite Rokhlin dimension with commuting towers in our sense, at least for Lie groups. The starting point of this project was the
simple observation that if $\alpha\colon\T\to\Aut(A)$ is an action of the circle with the
Rokhlin property on a unital \ca\ $A$, and if $n$ is a positive integer, then the restriction of $\alpha$ to the finite subgroup $\Z_n\subseteq\T$
has Rokhlin dimension at most one.
Theorem \ref{restrictions} can be regarded as a significant generalization of this fact. \\
\ \\
\indent This paper is organized as follows. In Section 2, we establish the notation that will be used throughout the
paper, and briefly recall the necessary preliminary definitions and results. In Section 3, we introduce and systematically
study the notion of Rokhlin dimension for compact group actions on \uca s. In particular, we show that finite Rokhlin dimension is
preserved under a number of constructions, namely tensor products, direct limits, passage to quotients by invariant
ideals, and restriction to closed subgroups of finite codimension. In Section 4, we compare the notion of having finite Rokhlin dimension
(mostly in the commuting tower case) with other existing forms of freeness of group actions on \ca s. We show
that in the commutative case, finite Rokhlin dimension is equivalent to freeness of the action on the maximal ideal space; see Theorem
\ref{free action on spaces}. Moreover, for a compact Lie group action, the formulation with commuting towers is equivalent
to the $X$-Rokhlin property introduced in \cite{HP}; see Theorem \ref{strongly free and finite Rdim}. We apply this to
deduce that actions with finite Rokhlin dimension with commuting towers have discrete $K$-theory and are totally $K$-free.
Theorem \ref{K-thy restrictions for K-freeness} establishes $K$-theoretic obstructions that are complementary to the ones
established in \cite{HP}. We use this result to confirm a Conjecture of Phillips from \cite{phillips equivariant k-theory book};
see Remark \ref{rem: conjecture}. Our results in fact show that Phillips' conjecture holds for a class of \ca s which
is much larger than the class of AF-algebras, without assuming that the action is specified by the way it is constructed.\\
\indent We show in Example \ref{eg: commuting matters} that commuting towers cannot always be arranged, even at the cost
of considering additional towers, and even in the absence of $K$-theoretic obstructions. Indeed, this example (originally constructed by Izumi in \cite{izumi finite group actions I})
has Rokhlin dimension 1 with noncommuting towers, and infinite Rokhlin dimension with commuting towers. Theorem \ref{thm: summary}
collects and summarizes the known implications between the notions considered in Section 4, and it also references
counterexamples that show that no other implications hold in full generality. Finally,
in Section 5, we give some indication of possible directions for future work, and raise some natural questions related
to our findings. \\
\indent Some applications of the results in this paper will appear in \cite{gardellaRegProp} and \cite{GHS}.\\
\ \\
\indent All groups will be second countable. (Many statements are true in greater generality, but second-countable
groups suffice for our purposes.) By a theorem of Birkoff-Kakutani (Theorem 1.22 in \cite{montgomery zippin}), a topological
group is metrizable if and only if it is first countable. In particular, all our groups will be metrizable. It is easy to
check that a compact metrizable group admits a translation-invariant metric. We will implicitly choose such a metric on all
our groups, which will usually be denoted by $d$.\\
\indent We point out that most of our results are stated and proved for arbitrary second-countable compact groups, while
some results in Section 4 are only obtained for compact Lie groups.\\
\ \\
\indent \textbf{Acknowledgements:} Most of this work was done while the author was visiting the Westf\"alische
Wilhelms-Universit\"at M\"unster in the summer of 2013, and while the author was participating in the Thematic Program
on Abstract Harmonic Analysis, Banach and Operator Algebras, at the Fields Institute for Research in Mathematical Sciences
at the University of Toronto, in January-June 2014. He wishes to thank both Mathematics departments for their
hospitality and for providing a stimulating research environment. \\
\indent The author would like to express his gratitude Hannes Thiel for several helpful discussions concerning the existence of local
cross sections for free actions.
He is especially indebted to Ilan Hirshberg and Chris Phillips for giving him access to a draft of their paper \cite{HP} long before
it was ready for publication, and for helpful discussions and correspondence. Finally, he would also like to thank Wilhelm Winter
for suggesting this line of research.

\section{Notation and preliminaries}

\indent We adopt the convention that $\{0\}$ is not a \uca, this is, we require that $1\neq 0$ in a \uca.
Homomorphisms of \ca s are always assumed to be $\ast$-homomorphisms. For a \ca\ $A$, we denote by $\Aut(A)$
the automorphism group of $A$. If $A$ is moreover unital, then $\U(A)$ denotes the unitary group of $A$.\\
\indent For a locally compact group $G$, an action of $G$ on $A$ is always assumed to be a \emph{continuous}
group homomorphism from $G$ into $\Aut(A)$, unless otherwise stated. If $\alpha\colon G\to\Aut(A)$ is an
action of $G$ on $A$, then we will denote by $A^\alpha$ the fixed-point subalgebra of $A$ under $\alpha$. \\
\indent We take $\N=\{1,2,\ldots\}$. The $p$-adic integers will not appear in this paper, so we write $\Z_n$ for the cyclic group
$\Z/ n\Z$. If $A$ is a \ca, we denote by $[\cdot,\cdot]\colon A\times A\to A$ the additive commutator,
this is, $[a,b]=ab-ba$ for all $a,b\in A$.\\

\subsection{Equivariant $K$-theory} We recall the definition of equivariant $K$-theory for compact
group actions. A thorough development of the theory can be found in \cite{phillips equivariant k-theory book},
whose notation we will follow. We denote the suspension of a \ca\ $A$ by $SA$.

\begin{df}\label{df: eqKthy} Let $G$ be a compact group, let $A$ be a unital \ca\ and let $\alpha\colon G\to\Aut(A)$
be an action. Denote by $\mathcal{P}_G(A)$ the set of all $G$-invariant projections
in all of the algebras of the form $\B(V)\otimes A$, for all \fd\ representations $\lambda\colon G\to \U(V)$
(we take the diagonal action of $G$ on $\B(V)\otimes A$). Two $G$-invariant
projections $p$ and $q$ in $\mathcal{P}_G(A)$ are said to be
\emph{equivariantly Murray-von Neumann equivalent} if there exists a $G$-invariant partial isometry $s$ in $\B(V,W)\otimes A$
such that $s^*s=p$ and $ss^*=q$. Given a
\fd\ representation $\lambda\colon G\to \U(V)$ of $G$ and a $G$-invariant projection $p\in \B(V)\otimes A$,
and to emphasize role played by the representation $\lambda$,
we denote the element in $\mathcal{P}_G(A)$ it determines by $(p,V,\lambda)$. We let $S_G(A)$ denote the set
of equivalence classes in $\mathcal{P}_G(A)$ with addition given
by direct sum. \\
\indent We define the \emph{equivariant $K_0$-group} of $A$, denoted $K_0^G(A)$, to be the Grothendieck group of
$S_G(A)$, and define the \emph{equivariant $K_1$-group} of $A$, denoted $K_1^G(A)$, to be $K_0^G(SA)$,
where the action of $G$ on $SA$ is trivial in the suspension direction.\end{df}

\begin{rem} The equivariant $K$-theory of $A$ is a module over the representation ring $R(G)$ of
$G$, which can be identified with $K_0^G(\C)$, with the
operation given by tensor product. The induced operation $R(G)\times K_\ast^G(A)\to K_\ast^G(A)$ makes $K_\ast^G(A)$ into an $R(G)$-module. \end{rem}

The following result is Julg's Theorem.

\begin{thm}\label{Julg} (See Theorem 2.6.1 in \cite{phillips equivariant k-theory book})
Let $G$ be a compact group, let $A$ be a unital \ca\ and let $\alpha\colon G\to\Aut(A)$
be an action. Then there is a
natural isomorphism
$$K_\ast^G(A)\cong K_\ast(A\rtimes_\alpha G).$$ \end{thm}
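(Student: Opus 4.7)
The plan is to establish the isomorphism via an equivalence between finitely generated projective $G$-equivariant right $A$-modules and finitely generated projective right $(A\rtimes_\alpha G)$-modules; the $K_1$ statement will follow formally from the $K_0$ statement.

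First I would construct the forward map $\Phi_A\colon K_0^G(A)\to K_0(A\rtimes_\alpha G)$. Given a representative $(p,V,\lambda)\in\mathcal{P}_G(A)$, view $V\otimes A$ as a right Hilbert $A$-module and equip it with the diagonal $G$-action $g\cdot(v\otimes a)=\lambda(g)v\otimes\alpha_g(a)$. The covariance relation $g\cdot(\xi\cdot a)=(g\cdot\xi)\cdot\alpha_g(a)$ then makes $V\otimes A$ into a right $(A\rtimes_\alpha G)$-module in the usual way (integrating the $G$-action against elements of $C(G,A)$). Since $p\in\B(V)\otimes A$ is $G$-invariant and $A$-linear, it defines an $(A\rtimes_\alpha G)$-linear projection, and $p(V\otimes A)$ is a finitely generated projective $(A\rtimes_\alpha G)$-module, yielding the class $[p(V\otimes A)]\in K_0(A\rtimes_\alpha G)$.

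Next I would verify that this assignment descends to $K_0^G(A)$. A $G$-invariant partial isometry $s\in\B(V,W)\otimes A$ with $s^\ast s=p$ and $ss^\ast=q$ is automatically $(A\rtimes_\alpha G)$-linear, so it implements an isomorphism of the associated modules; direct sums are preserved by construction. This gives $\Phi_A$ on $S_G(A)$, hence on its Grothendieck group, and naturality in $A$ is immediate from the functoriality of the module construction.

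For the inverse I would exploit the Peter--Weyl theorem. Any finitely generated projective $(A\rtimes_\alpha G)$-module $M$ is a direct summand of $(A\rtimes_\alpha G)^n$ for some $n$. The regular representation realizes $A\rtimes_\alpha G$ acting on the Hilbert $A$-module $L^2(G)\otimes A$, and Peter--Weyl decomposes $L^2(G)=\bigoplus_\sigma V_\sigma\otimes V_\sigma^\ast$ into isotypical pieces. A finite-generation argument shows that the projection onto $M$ is approximable, and in fact conjugate, to one supported on only finitely many isotypical components; bundling these into a single \fd\ $G$-representation $V$ produces a $G$-invariant projection $p\in\B(V)\otimes A$ with $p(V\otimes A)\cong M$ as $(A\rtimes_\alpha G)$-modules. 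This furnishes a two-sided inverse to $\Phi_A$. Finally, since $G$ acts trivially in the suspension direction, $SA\rtimes_{\id\otimes\alpha}G\cong S(A\rtimes_\alpha G)$, and therefore $K_1^G(A):=K_0^G(SA)\cong K_0(S(A\rtimes_\alpha G))=K_1(A\rtimes_\alpha G)$ follows from the $K_0$-case. The main obstacle is the inverse construction: unlike the finite group case, passage from a finitely generated projective $(A\rtimes_\alpha G)$-module to a \fd\ $G$-representation requires genuine harmonic analysis on $G$ to guarantee that only finitely many isotypes are needed.
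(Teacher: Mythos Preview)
The paper does not give a proof of this theorem at all: it is quoted as a known result with a citation to Theorem~2.6.1 of \cite{phillips equivariant k-theory book}, and no argument is supplied. So there is no ``paper's own proof'' to compare against.

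Your outline is the standard module-theoretic approach to Julg's theorem and is essentially correct as a sketch. A couple of points are worth tightening. First, the passage from the $G$-equivariant right $A$-module $V\otimes A$ to a right $(A\rtimes_\alpha G)$-module is not quite ``integrating the $G$-action against elements of $C(G,A)$''; rather, one checks that the covariance relation makes $V\otimes A$ into a module over the convolution algebra $C(G,A)$ on the right, and then completes. Second, and more substantively, in the inverse direction the assertion that a projection in $M_n(A\rtimes_\alpha G)$ can be conjugated to one supported on finitely many isotypes is the heart of the matter; your phrase ``a finite-generation argument shows'' hides the actual work. One standard way is to observe that $A\rtimes_\alpha G$ is isomorphic to the fixed-point algebra $(\K(L^2(G))\otimes A)^G$ for the action $\mathrm{Ad}(\rho)\otimes\alpha$, and then use Peter--Weyl to see that this fixed-point algebra is a direct limit of the algebras $(\B(V)\otimes A)^G$ over finite-dimensional subrepresentations $V\subseteq L^2(G)$; any projection can then be perturbed into one of the finite stages. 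As written your sketch identifies the right obstacle but does not dispatch it.
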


When the group $G$ is abelian, we denote its gual group by $\widehat{G}$. In this case, the representation ring $R(G)$ can be naturally identified with $\Z[\widehat{G}]$.
With this identification in mind, the $R(G)$-module
structure on $K_\ast(A\rtimes_\alpha G)$ can be described as follows (see Proposition 2.7.10 in
\cite{phillips equivariant k-theory book}). Given $\tau$ in $\widehat{G}$ and $\eta$ in
$K_\ast(A\rtimes_\alpha G)$, we have
$$\tau \cdot \eta = K_\ast(\widehat{\alpha}_\tau)(\eta).$$

Recall the following definition from \cite{phillips equivariant k-theory book}.

\begin{df}\label{df: augmentation ideal} Let $G$ be a compact group. We let $I_G$ denote the augmentation
ideal of $R(G)$; that is, the kernel of the map
$R(G)\to \Z$ which sends the class $[V]$ of a finite dimensional unitary representation $V$ of $G$, to
its dimension $\dim(V)$. \end{df}

Given $n$ in $\N$, we denote by $I^n_G$ the product of $I_G$ with itself $n$ times.

\subsection{Central sequence algebras} Let $A$ be a \uca. Let $\ell^\I(\N,A)$ denote the set of all bounded
sequences $(a_n)_{n\in\N}$ in $A$, endowed with the supremum norm
$$\|(a_n)_{n\in\N}\|=\sup_{n\in\N}\|a_n\|$$
and pointwise operations. Then $\ell^\I(\N,A)$ is a \uca, the unit being the constant sequence $1_A$. Let
$$c_0(\N,A)=\left\{(a_n)_{n\in\N}\in\ell^\I(\N,A)\colon \lim_{n\to\I}\|a_n\|=0\right\}.$$
Then $c_0(\N,A)$ is an ideal in $\ell^\I(\N,A)$, and we denote the quotient
$$\ell^\I(\N,A)/c_0(\N,A)$$
by $A_\I$. Write $\kappa_A\colon \ell^\I(\N,A)\to A_\I$ for the quotient map. We identify $A$ with the
unital subalgebra of $\ell^\I(\N,A)$ consisting of the constant sequences, and with a unital subalgebra of
$A_\I$ by taking its image under $\kappa_A$. We write $A_\I\cap A'$ for the relative commutant of $A$
inside of $A_\I$. \\
\indent If $\alpha\colon G\to\Aut(A)$ is an action of $G$ on $A$, then there are actions of $G$ on $A_\I$
and on $A_\I\cap A'$, both denoted by $\alpha_\I$. Note that unless the group $G$ is discrete, these actions
will in general not be continuous. We set
$$\ell^\I_\alpha(\N,A)=\{a\in \ell^\I(\N,A)\colon g\mapsto (\alpha_\I)_g(a) \ \mbox{ is continuous}\},$$
and $A_{\I,\alpha}=\kappa_A(\ell^\I_\alpha(\N,A))$. By construction, the restriction of
$\alpha_\I$ to $A_{\I,\alpha}$ is continuous.\\
\ \\
\indent We note that if $\varphi\colon A\to B$ is a unital \hm\ of \uca s, then $\varphi$ induces unital
\hm s $\ell^\I(\N,\varphi)\colon \ell^\I(\N,A)\to \ell^\I(\N,B)$ and $\varphi_\I\colon A_\I\to B_\I$. The
assignments $A\mapsto \ell^\I(\N,A)$ and $A\mapsto A_\I$ are functorial for unital \ca s and unital \hm s.


\indent Functoriality of the assignment $A\mapsto A_\I\cap A'$ is more subtle, since not every
map between \ca s induces a
map between the corresponding central sequences. In Lemma \ref{surjective maps and central sequence algebra}
and Lemma \ref{tensor product and central sequence algebra}, we show two instances in which this is indeed
the case. These two cases will be needed in the following section.

\begin{lemma}\label{surjective maps and central sequence algebra} Let $\varphi\colon A\to B$ be a surjective
unital homomorphism of unital \ca s. Then the restriction of $\varphi_\I$ to $A_\I\cap A'$ induces a unital
homomorphism $\varphi_\I\colon A_\I\cap A'\to B_\I\cap B'$.\\
\indent Moreover, if $G$ is a locally compact group and $\alpha\colon G\to\Aut(A)$ and $\beta\colon G\to \Aut(B)$
are continuous actions of $G$ on $A$ and $B$ respectively, then $\varphi$ also induces a unital homomorphism
$\varphi_\I\colon A_{\I,\alpha}\cap A'\to B_{\I,\beta}\cap B'$, which is equivariant if $\varphi\colon A\to B$ is.
\end{lemma}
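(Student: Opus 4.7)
The plan is as follows. For the first assertion, the point is that surjectivity of $\varphi$ allows us to lift any test element of $B$ to an element of $A$, which is precisely what is needed to transfer the (asymptotic) commutation property from $A$ to $B$. Concretely, let $a \in A_\I \cap A'$ and fix a bounded representative $(a_n)_{n\in\N} \in \ell^\I(\N,A)$ with $\kappa_A((a_n)_{n\in\N}) = a$. Since $a$ commutes with every element of $A$, for each $x \in A$ one has $\lim_{n\to\I}\|[a_n,x]\| = 0$. Set $b = \varphi_\I(a) = \kappa_B((\varphi(a_n))_{n\in\N}) \in B_\I$. To see $b \in B_\I \cap B'$, pick an arbitrary $y \in B$ and, using surjectivity of $\varphi$, choose $x \in A$ with $\varphi(x) = y$. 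Since $\varphi$ is a \hm, we have
\[
[\varphi(a_n),y] = [\varphi(a_n),\varphi(x)] = \varphi([a_n,x]),
\]
so $\|[\varphi(a_n),y]\| \leq \|[a_n,x]\| \to 0$ as $n\to\I$. Thus $b$ commutes with $y$ in $B_\I$, and as $y \in B$ was arbitrary, $b \in B_\I \cap B'$. This yields the desired unital \hm\ $\varphi_\I\colon A_\I \cap A' \to B_\I \cap B'$; it is well-defined because $\varphi_\I$ sends $c_0$-sequences to $c_0$-sequences, and it is unital because $\varphi$ is.

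For the equivariant ``moreover'' part, assume in addition that $\varphi$ intertwines $\alpha$ and $\beta$, so $\varphi \circ \alpha_g = \beta_g \circ \varphi$ for every $g \in G$. Applied sequence-wise and passed to the quotient, this gives
\[
\varphi_\I \circ (\alpha_\I)_g = (\beta_\I)_g \circ \varphi_\I
\]
as maps $A_\I \to B_\I$. Now if $a \in A_{\I,\alpha}$, then by definition the orbit map $g \mapsto (\alpha_\I)_g(a)$ is norm-continuous from $G$ to $A_\I$. Composing with the bounded (hence continuous) \hm\ $\varphi_\I$, the orbit map $g \mapsto (\beta_\I)_g(\varphi_\I(a)) = \varphi_\I((\alpha_\I)_g(a))$ is continuous, so $\varphi_\I(a) \in B_{\I,\beta}$. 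Combined with the first part, the restriction of $\varphi_\I$ sends $A_{\I,\alpha} \cap A'$ into $B_{\I,\beta}\cap B'$, and the intertwining identity above shows that this restriction is automatically equivariant.

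There is essentially no hard step here; the only nontrivial ingredient is the use of surjectivity of $\varphi$ to represent arbitrary elements of $B$ as images from $A$, which is what allows the commutation property to pass through $\varphi_\I$. The subtlety the lemma is meant to flag is that for a non-surjective \hm\ this argument fails, so functoriality of $A \mapsto A_\I \cap A'$ is not automatic; this is exactly why the lemma, together with the tensor-product analogue stated next, is isolated here for later use.
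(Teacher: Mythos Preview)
Your proof is correct and follows essentially the same approach as the paper: both use surjectivity of $\varphi$ to lift a test element $b\in B$ to some $c\in A$, and then transfer the commutation relation through the homomorphism $\varphi_\I$. The paper phrases the computation directly at the level of the quotient $A_\I$, while you work with representing sequences, but this is a cosmetic difference; your treatment of the ``moreover'' part is in fact more detailed than the paper's, which simply declares it analogous.
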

\begin{proof} We only need to check that $\varphi_\I(A_\I\cap A')\subseteq B_\I\cap B'$. Let
$a=(a_n)_{n\in\N}$ in $A_\I\cap A'$ and let $b\in B$. We have to show that $\varphi_\I(a)$ commutes with
$\kappa_B(b)$. Choose $c\in A$ such that $\varphi(c)=b$. Since $\kappa_B\circ\varphi=\varphi_\I\circ\kappa_A$,
we have
$$[\varphi_\I(a),b]=[\varphi_\I(a),\kappa_B(\varphi(c))]=[a,\kappa_A(c)]=0,$$
and the result follows.\\
\indent The proof of the second claim is analogous. \end{proof}

An important case of when a unital \hm\ between \uca s induces a unital \hm\ between the central sequence
algebras is that of the unital inclusion $A\hookrightarrow A\otimes B$ of a unital \ca\ as the first tensor
factor. This \hm\ is not covered by the previous lemma, so we shall prove it separately.

\begin{lemma}\label{tensor product and central sequence algebra} Let $A$ and $B$ be \uca s, let $A\otimes B$ be
any \ca\ completion of the algebraic tensor product of $A$ and $B$, and let
$\iota\colon A\to A\otimes B$ be given by $\iota(a)=a\otimes 1$ for all $a\in A$. Then $\iota_\I$ restricts
to a unital homomorphism
$$\iota_\I\colon A_\I\cap A'\to (A\otimes B)_\I\cap (A\otimes B)'.$$
\indent Moreover, if $G$ is a locally compact group and $\alpha\colon G\to\Aut(A)$ and $\beta\colon G\to \Aut(B)$
are continuous actions of $G$ on $A$ and $B$ respectively, and if the tensor product action
$$g\mapsto(\alpha\otimes\beta)_g=\alpha_g\otimes\beta_g$$
extends to $A\otimes B$, then $\iota$ induces a unital equivariant homomorphism
$$\iota_\I\colon A_{\I,\alpha}\cap A'\to (A\otimes B)_{\I,\alpha\otimes\beta}\cap (A\otimes B)'.$$
\end{lemma}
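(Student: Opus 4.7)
The approach is to exploit functoriality of $A \mapsto A_\I$ under unital $\ast$-homomorphisms, which gives a unital $\ast$-homomorphism $\iota_\I \colon A_\I \to (A \otimes B)_\I$, and then to verify by a direct computation that this map sends $A_\I \cap A'$ into $(A \otimes B)_\I \cap (A \otimes B)'$. The argument parallels that of Lemma~\ref{surjective maps and central sequence algebra}, with the role of surjectivity replaced by density of the algebraic tensor product in any $C^\ast$-completion.

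Given $(a_n)_{n \in \N} \in \ell^\I(\N, A)$ whose class lies in $A_\I \cap A'$, I need to check that $\iota_\I(\kappa_A((a_n)_n)) = \kappa_{A \otimes B}((a_n \otimes 1)_n)$ commutes with every $x \in A \otimes B$, viewed in $(A \otimes B)_\I$ via the canonical embedding. Since the algebraic tensor product is norm-dense in any $C^\ast$-completion, and the map $x \mapsto [\iota_\I([(a_n)_n]), x]$ is continuous and linear, it suffices to verify vanishing of this commutator on elementary tensors $a' \otimes b'$ with $a' \in A$ and $b' \in B$. Here the key identity is
\[
(a_n \otimes 1)(a' \otimes b') - (a' \otimes b')(a_n \otimes 1) = [a_n, a'] \otimes b',
\]
whose norm is bounded by $\|[a_n, a']\| \cdot \|b'\|$. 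Since $(a_n)_n$ commutes asymptotically with $a'$, this tends to zero, so the commutator vanishes in $(A \otimes B)_\I$, as required.

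For the equivariant refinement, I would first verify that $\iota$ induces $\iota_\I \colon A_{\I,\alpha} \to (A \otimes B)_{\I, \alpha\otimes\beta}$: indeed, $(\alpha_g \otimes \beta_g)(a_n \otimes 1) = \alpha_g(a_n) \otimes 1$, so continuity of $g \mapsto (\alpha_g(a_n))_n$ in $\ell^\I(\N, A)$ transfers directly to continuity of $g \mapsto (\alpha_g(a_n) \otimes 1)_n$ in $\ell^\I(\N, A \otimes B)$. Combined with the first part, this gives $\iota_\I(A_{\I, \alpha} \cap A') \subseteq (A \otimes B)_{\I, \alpha \otimes \beta} \cap (A \otimes B)'$. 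The same identity $(\alpha_g \otimes \beta_g) \circ \iota = \iota \circ \alpha_g$ also establishes equivariance of $\iota_\I$, first at the level of $\ell^\I$-sequences and hence after passage to the quotient.

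The main (minor) obstacle is that functoriality of $A \mapsto A_\I \cap A'$ fails for general unital \hm s, so one must exploit the specific structure of $\iota$: elements in its image automatically commute with $1 \otimes B$, reducing the commutation check to commutation with the factor $a' \otimes 1$, which is controlled by the hypothesis on $(a_n)_n$. Once this is recognized, the verification becomes a routine density-plus-continuity argument, and the equivariant statement follows essentially for free from the same computation.
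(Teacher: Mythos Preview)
Your proposal is correct and follows essentially the same approach as the paper: reduce to simple tensors by density and continuity, then use the identity $[a_n\otimes 1, a'\otimes b']=[a_n,a']\otimes b'$ together with $\|[a_n,a']\|\to 0$. Your write-up is in fact more explicit than the paper's, which omits the norm estimate and declares the equivariant part ``straightforward'' without further comment.
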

\begin{proof} Let $a=(a_n)_{n\in\N}$ in $A_\I\cap A'$ and let $x\in A\otimes B$. We may assume that $x$ is
a simple tensor, say $x=c\otimes b$ for some $c\in A$ and some $b\in B$. Then
$$[\iota_\I(a),x]=[(a_n\otimes 1)_{n\in\N},\kappa_{A\otimes B}(c\otimes b)]=0,$$
since $\lim\limits_{n\to\I}\|[a_n,c]\|= 0$.\\
\indent The proof of the second claim is straightforward. \end{proof}

\subsection{Completely positive order zero maps} We briefly recall some of the basics of completely positive
order zero maps. See \cite{winter zacharias} for more details and further results.\\
\ \\
\indent Let $A$ be a \ca, and let $a,b$ be elements in $A$. We say that $a$ and $b$ are \emph{orthogonal},
and write $a\perp b$, if $ab=ba=a^*b=ab^*=0$. If $a,b\in A$ are selfadjoint, then they are orthogonal \ifo
$ab=0$.

\begin{df} Let $A$ and $B$ be \ca s, and let $\varphi\colon A\to B$ be a completely positive map. We say
that $\varphi$ has \emph{order zero} if for every $a$ and $b$ in $A$, we have $\varphi(a)\perp \varphi(b)$
whenever $a\perp b$.\end{df}

\begin{rem} 
It is straightforward to check that $C^*$-algebra \hm s have order zero, and that the composition of two
order zero maps is again order zero.\end{rem}

The following is the main result in \cite{winter zacharias}.

\begin{thm}\label{cpc order zero and homs} (Theorem 2.3 and Corollary 3.1 in \cite{winter zacharias}) Let
$A$ and $B$ be \ca s. There is a bijection between completely positive contractive order zero maps $A\to B$
and $C^*$-algebra \hm s $C_0((0,1])\otimes A\to B$. A completely positive contractive order zero map
$\varphi\colon A\to B$ induces the \hm\ $\rho_\varphi\colon C_0((0,1])\otimes A\to B$ determined by
$\rho_\varphi(\id_{(0,1]}\otimes a)=\varphi(a)$ for all $a\in A$. Conversely, if $\rho\colon C_0((0,1])\otimes
A\to B$ is a \hm, then the induced completely positive contractive order zero map $\varphi_\rho\colon A\to B$
is the one given by $\varphi_\rho(a)=\rho(\id_{(0,1]}\otimes a)$ for all $a\in A$.\end{thm}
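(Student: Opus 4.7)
The bijection has two directions; the reverse is largely formal, while the forward direction rests on a structure theorem for completely positive contractive order zero maps. For the reverse, given $\rho \colon C_0((0,1]) \otimes A \to B$, one sets $\varphi_\rho(a) = \rho(\id_{(0,1]} \otimes a)$: complete positivity and contractivity are immediate from the factorization through the isometric inclusion $a \mapsto \id_{(0,1]} \otimes a$, and the order zero property follows because $a \perp b$ in $A$ implies $(\id_{(0,1]} \otimes a)(\id_{(0,1]} \otimes b) = \id_{(0,1]}^2 \otimes ab = 0$ (and similarly with adjoints), so $\rho$ sends these two elementary tensors to orthogonal elements of $B$.

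For the forward direction, the main step is the following structure theorem, which I would prove first: for every completely positive contractive order zero map $\varphi \colon A \to B$, there exist a positive contraction $h \in M(C^*(\varphi(A)))$ and a $\ast$-homomorphism $\pi_\varphi \colon A \to M(C^*(\varphi(A)))$ whose range commutes with $h$, such that $\varphi(a) = h\,\pi_\varphi(a)$ for all $a \in A$, with $h = \varphi(1_A)$ when $A$ is unital. To establish it, I would first reduce to the case $A$ unital by passing to the minimal unitization and extending $\varphi$ by sending the adjoined unit to the strict limit $\lim_\lambda \varphi(e_\lambda)$ along any approximate unit $(e_\lambda)$ of $A$. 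In the unital case, set $h = \varphi(1_A)$ and apply Stinespring to write $\varphi(a) = V^* \pi(a) V$ for some representation $\pi \colon A \to B(H_0)$ and contraction $V$. The key technical point, which I expect to be the main obstacle, is extracting commutativity: one uses that $\varphi$ sends orthogonal positive elements of the form $f(a)$ and $g(a)$ (for disjointly supported continuous $f, g$ on the spectrum of a positive $a \in A$) to orthogonal elements of $B$, and combines this with the equality case of the Schwarz inequality for completely positive maps to conclude that $VV^*$ commutes with $\pi(A)$. From this one extracts both the commutativity of $h$ with $\pi_\varphi(A)$ and the identity $\varphi(a) = h\,\pi_\varphi(a)$.

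With the structure theorem in hand, constructing $\rho_\varphi$ is a formal consequence of the universal property of $C_0((0,1])$ as the universal $C^*$-algebra generated by a positive contraction (the generator being $\id_{(0,1]}$). Consequently, for any $C^*$-algebra $C$, $\ast$-homomorphisms $C_0((0,1]) \otimes A \to C$ correspond precisely to pairs consisting of a positive contraction and a $\ast$-homomorphism $A \to C$ whose range commutes with it. Applying this with $C = M(C^*(\varphi(A)))$ and the pair $(h, \pi_\varphi)$ produces $\rho_\varphi$; its image lies in $C^*(\varphi(A)) \subseteq B$ because $h\,\pi_\varphi(a) = \varphi(a) \in C^*(\varphi(A))$ and functional calculus in $h$ applied to such products remains in $C^*(\varphi(A))$. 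The identity $\rho_\varphi(\id_{(0,1]} \otimes a) = h\,\pi_\varphi(a) = \varphi(a)$ then holds by construction, and the two assignments $\varphi \mapsto \rho_\varphi$ and $\rho \mapsto \varphi_\rho$ are mutually inverse because any $\ast$-homomorphism out of $C_0((0,1]) \otimes A$ is determined by its values on the generating elementary tensors $\id_{(0,1]} \otimes a$.
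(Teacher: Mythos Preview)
The paper does not give its own proof of this statement: it is recorded as Theorem~2.3 and Corollary~3.1 of \cite{winter zacharias} and simply cited, with no argument supplied. So there is nothing in the paper to compare your proposal against.

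That said, your sketch is correct and is essentially the argument of Winter--Zacharias. The reverse direction is indeed formal, and for the forward direction the structure theorem $\varphi(a) = h\,\pi_\varphi(a)$ with $h$ a positive contraction commuting with the range of a $\ast$-homomorphism $\pi_\varphi$ is precisely Theorem~2.3 of \cite{winter zacharias}; the passage from this to a $\ast$-homomorphism out of $C_0((0,1])\otimes A$ via the universal property of the cone is their Corollary~3.1. One small point worth tightening: in the unital reduction step, the extension of $\varphi$ to the unitization need not itself be order zero (orthogonality involving the adjoined unit can fail), so one should either argue directly in the non-unital case or, as Winter--Zacharias do, work with the supporting $\ast$-homomorphism obtained from Stinespring without relying on order zero of the extended map. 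Apart from that, your outline is sound.
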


The following easy corollary will be used throughout without reference.

\begin{cor} \label{cor: cpcoz equivariant}
Let $A$ and $B$ be unital \ca s, let $G$ be a locally compact group, let $\alpha\colon G\to\Aut(A)$
 and $\beta\colon G\to\Aut(B)$ be continuous actions of $G$ on $A$ and $B$ respectively, and let $\varphi\colon
A\to B$ be a completely positive order zero map. Denote by $\rho_\varphi\colon C_0((0,1])\otimes A\to B$ the
induced homomorphism given by Theorem \ref{cpc order zero and homs}. Give $C_0((0,1])$ the
trivial action of $G$, and give $C_0((0,1])\otimes A$ the corresponding diagonal action. Then $\varphi$ is
equivariant if and only if $\rho_\varphi$ is equivariant.
\end{cor}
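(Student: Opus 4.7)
The plan is to use the bijection from Theorem \ref{cpc order zero and homs} and then reduce the equivariance of the homomorphism $\rho_\varphi$ to its action on a generating set, where it can be checked by a direct computation.

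First I would observe that, by Theorem \ref{cpc order zero and homs}, for every $a\in A$ and every $g\in G$ we have the identity
\[
\rho_\varphi(\mathrm{id}_{(0,1]}\otimes a) = \varphi(a),
\]
and that the action of $g\in G$ on $C_0((0,1])\otimes A$ (with the trivial action on the first factor) sends $\mathrm{id}_{(0,1]}\otimes a$ to $\mathrm{id}_{(0,1]}\otimes\alpha_g(a)$. Thus on this distinguished set of elements, equivariance of $\rho_\varphi$ becomes the statement
\[
\varphi(\alpha_g(a)) = \beta_g(\varphi(a)),
\]
that is, the equivariance of $\varphi$. This already handles the reverse implication: if $\rho_\varphi$ is equivariant, then evaluating at $\mathrm{id}_{(0,1]}\otimes a$ gives exactly the equation above for all $a\in A$ and $g\in G$.

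For the forward implication, assume $\varphi$ is equivariant. The computation above shows that $\rho_\varphi$ and $\beta_g\circ\rho_\varphi\circ(\mathrm{id}\otimes\alpha_g^{-1})$ agree on every element of the form $\mathrm{id}_{(0,1]}\otimes a$. Since these are both $*$-homomorphisms from $C_0((0,1])\otimes A$ to $B$, it suffices to observe that the set $\{\mathrm{id}_{(0,1]}\otimes a : a\in A\}$ generates $C_0((0,1])\otimes A$ as a $C^*$-algebra. This is standard: polynomials in $\mathrm{id}_{(0,1]}$ without constant term are dense in $C_0((0,1])$ by the Stone--Weierstrass theorem, and multiplying such generators one obtains a dense subset of the algebraic tensor product $C_0((0,1])\odot A$, which is dense in any $C^*$-tensor product completion.

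I do not expect any serious obstacle; the only mild point is verifying that the generating set is in fact generating, which is a routine Stone--Weierstrass argument. No continuity of the $G$-action is used beyond what is already packaged into the statement that $\alpha$ and $\beta$ are actions by automorphisms, and the trivial action on $C_0((0,1])$ makes the diagonal action on $C_0((0,1])\otimes A$ automatically well defined and continuous.
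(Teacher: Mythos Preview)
Your proposal is correct and follows essentially the same approach as the paper: both directions are handled by evaluating $\rho_\varphi$ on elements of the form $\id_{(0,1]}\otimes a$, reducing equivariance of $\rho_\varphi$ to equivariance of $\varphi$, together with the observation that these elements generate $C_0((0,1])\otimes A$ as a \ca. Your write-up is in fact slightly more careful than the paper's, which leaves the Stone--Weierstrass step implicit and has some notational slips between $\varphi$, $\rho$, and $\rho_\varphi$.
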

\begin{proof} We denote by $\widetilde{\alpha}\colon G\to\Aut(C_0((0,1])\otimes A)$ the diagonal action
described in the statement. Assume that $\rho$ is equivariant. Given $g$ in $G$ and $a$ in $A$, we have
\begin{align*}\varphi_\rho(\widetilde{\alpha}_g(\id_{(0,1]}\otimes a)))&=\varphi_\rho(\id_{(0,1]}\otimes \alpha_g(a)))\\
&= \rho(\alpha_g(a))\\
&= \beta_g(\rho(a))\\
&= \beta_g( \varphi_\rho(\id_{(0,1]}\otimes a))).
\end{align*}
Since $\id_{(0,1]}$ generates $C_0((0,1])$, we conclude that $\varphi_\rho$ is equivariant. \\
\indent Conversely, if $\varphi_\rho$ is equivariant, it is clear that its restriction to the invariant tensor
factor $A$ is also equivariant. This finishes the proof.
\end{proof}

\section{Rokhlin dimension for compact group actions}

We begin by recalling the definition of finite Rokhlin dimension for finite groups.

\begin{df}(See Definition 1.1 in \cite{HWZ}.)\label{df: Rdimfingp} Let $G$ be a finite group, let $A$ be a \uca, and let $\alpha\colon
 G\to\Aut(A)$ be an action of $G$ on $A$. Given a non-negative integer $d$, we say that $\alpha$ has \emph{Rokhlin
dimension $d$}, and denote this by $\dimRok(\alpha)=d$, if $d$ is the least integer with the following property:
for every $\ep>0$ and for every finite subset $F$ of $A$, there exist positive contractions $f_g^{(\ell)}$ for $g\in G$
and $\ell=0,\ldots,d$, satisfying the following conditions for every $\ell=0,\ldots,d$, for every $g,h\in G$, and for
every $a\in F$:
\be \item $\left\|\alpha_h\left(f_g^{(\ell)}\right)-f_{hg}^{(\ell)}\right\|<\ep$;
\item $\left\|f_g^{(\ell)}f_h^{(\ell)}\right\|<\ep$ whenever $g\neq h$;
\item $\left\|\sum\limits_{g\in G}\sum\limits_{\ell=0,\ldots,d} f_g^{(\ell)}-1\right\|<\ep$;
\item $\left\|\left[f_g^{(\ell)},a\right]\right\|<\ep$.\ee
If one can always choose the positive contractions $f_g^{(\ell)}$ above to moreover satisfy
$$\left\|\left[f_g^{(\ell)},f_h^{(k)}\right]\right\|<\ep$$
for every $g,h\in G$ and every $k,\ell=0,\ldots,d$, then we say that $\alpha$ has \emph{Rohlin dimension $d$ with
commuting towers}, and denote this by $\cdimRok(\alpha)=d$.
\end{df}

Given a compact group $G$, we denote by $\verb'Lt'\colon G\to\mbox{Homeo}(G)$ the action of left translation.
With a slight abuse of notation, we will also denote by $\verb'Lt'$ the induced action of $G$ on $C(G)$.\\
\indent Definition \ref{df: Rdimfingp} can be generalized to the case of second countable compact groups as follows.

\begin{df}\label{def finite Rdim}
Let $G$ be a second countable, Hausdorff compact group, let $A$ be a \uca,
and let $\alpha\colon G\to\Aut(A)$ be a continuous action. We say that $\alpha$ has \emph{\Rdim\ $d$}, if
$d$ is the least integer such that there exist equivariant completely positive contractive order zero
maps
$$\varphi_0,\ldots,\varphi_d\colon (C(G),\texttt{Lt})\to (A_{\I,\alpha}\cap A',\alpha_\I)$$
such that $\varphi_0(1)+\ldots+\varphi_d(1)=1$.\\
\indent We denote the \Rdim\ of $\alpha$ by $\dimRok(\alpha)$. If no integer $d$ as above exists, we say that
$\alpha$ has \emph{infinite Rokhlin dimension}, and denote it by $\dimRok(\alpha)=\I$.
If one can always choose the maps $\varphi_0,\ldots,\varphi_d$ to have commuting ranges, then we say that
$\alpha$ has \emph{Rokhlin dimension $d$ with commuting towers}, and write $\cdimRok(\alpha)=d$.\end{df}

\begin{rem}
It is an easy exercise to check that if $G$ is a finite group, then Definition \ref{def finite Rdim}
 agrees with Definition 1.1 in \cite{HWZ}. \end{rem}

It is clear that if $A$ is commutative, then the notions of \Rdim\ with and without commuting towers agree.
Nevertheless, Example \ref{eg: commuting matters} below shows that commuting towers cannot always be arranged,
even for $\Z_2$-actions on $\Ot$ with \Rdim\ 1. In fact, it seems that there really is a big difference between
these two notions, although we do not know how much they differ in general.

\begin{rem}\label{rem: Rp and Rdim}
It follows from Theorem 2.3 in \cite{winter zacharias} that a unital completely positive
contractive order zero map is necessarily a homomorphism. In particular, \Rdim\ zero is equivalent to the \Rp\
as in Definition 2.3 of \cite{hirshberg winter}. (We point out that the requirement that $\varphi$ be injective
in Definition 2.3 of \cite{hirshberg winter} is unnecessary: its kernel is a translation-invariant ideal of $C(G)$,
so it must be either $\{0\}$ or $C(G)$. Since $\varphi$ is assumed to be unital, it must be $\ker(\varphi)=\{0\}$.) \end{rem}

The following result is probably well-known to the experts. Since we have not been able to find a reference, we
present its proof here.

\begin{lemma} Let $A$ and $B$ be \ca s, and let $\varphi\colon A\to B$ be a completely positive contractive
order zero map. Then
$$\ker(\varphi)=\{a\in A\colon \varphi(a)=0\}$$
is a closed two-sided ideal in $A$.\end{lemma}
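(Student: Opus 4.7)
My plan is to exploit the bijection from Theorem~\ref{cpc order zero and homs} between cpc order zero maps $A\to B$ and $*$-homomorphisms $C_0((0,1])\otimes A\to B$. Let $\rho=\rho_\varphi\colon C_0((0,1])\otimes A\to B$ be the $*$-homomorphism corresponding to $\varphi$, so that $\rho(\id_{(0,1]}\otimes a)=\varphi(a)$ for every $a\in A$. Being the kernel of a $*$-homomorphism, $\ker(\rho)$ is automatically a closed two-sided ideal of $C_0((0,1])\otimes A$. I will deduce the analogous property of $\ker(\varphi)$ by pulling back along the continuous linear map $j\colon A\to C_0((0,1])\otimes A$ defined by $j(a)=\id_{(0,1]}\otimes a$.

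The first observation is that $a\in\ker(\varphi)$ if and only if $j(a)\in\ker(\rho)$, so $\ker(\varphi)=j^{-1}(\ker(\rho))$. Both the closedness and the linearity of $\ker(\varphi)$ then follow at once from the corresponding properties of $\ker(\rho)$ together with the continuity and linearity of $j$. The only point that is not automatic is the two-sided ideal property, since $j$ is not an algebra homomorphism: indeed there is no element of the form $1\otimes b$ in $C_0((0,1])\otimes A$ because $C_0((0,1])$ is non-unital.

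To handle this, I would fix $a\in\ker(\varphi)$ and $b\in A$, and choose an approximate unit $(e_\lambda)_\lambda$ for $C_0((0,1])$. Because $\ker(\rho)$ is a two-sided ideal and $\id_{(0,1]}\otimes a\in\ker(\rho)$, the elements
\[
(\id_{(0,1]}\otimes a)(e_\lambda\otimes b)=(\id_{(0,1]}e_\lambda)\otimes ab
\]
all lie in $\ker(\rho)$. As $\lambda$ grows, $\id_{(0,1]}e_\lambda\to\id_{(0,1]}$ in norm in $C_0((0,1])$, hence the displayed expression converges in norm to $\id_{(0,1]}\otimes ab$, which therefore belongs to the closed ideal $\ker(\rho)$. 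This shows $\varphi(ab)=\rho(\id_{(0,1]}\otimes ab)=0$, so $ab\in\ker(\varphi)$; the symmetric computation with $(e_\lambda\otimes b)(\id_{(0,1]}\otimes a)$ yields $ba\in\ker(\varphi)$. I do not foresee any real obstacle: the structure theorem reduces the statement to a formal manipulation of ideals in $C^*$-algebras, and the only subtlety—the absence of $1\otimes b$—is handled cleanly by approximate units.
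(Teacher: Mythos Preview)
Your proposal is correct and follows essentially the same strategy as the paper: both invoke the structure theorem to pass to the associated $*$-homomorphism $\rho\colon C_0((0,1])\otimes A\to B$ and then pull the ideal property of $\ker(\rho)$ back to $\ker(\varphi)$. The only difference is the device used to show $\id_{(0,1]}\otimes ab\in\ker(\rho)$: the paper first reduces to positive $a$ and uses the factorization $\id_{(0,1]}\otimes ax=(t^{1/2}\otimes a^{1/2})(t^{1/2}\otimes a^{1/2}x)$ together with closure of $C^*$-ideals under functional calculus, whereas your approximate-unit argument handles arbitrary $a$ directly and is arguably a bit cleaner.
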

\begin{proof} That $\ker(\varphi)$ is closed follows easily by continuity of $\varphi$. Let us now show that it is a two-sided
ideal.\\
\indent Let $\pi\colon C_0((0,1])\otimes A\to B$ be the homomorphism determined by $\pi(\id_{(0,1]}\otimes a)
=\varphi(a)$ for all $a\in A$ (see Theorem \ref{cpc order zero and homs} above). Then $\ker(\pi)$ is an ideal
of $C_0((0,1])\otimes A$. Let $a\in \ker(\varphi)$ and let $x\in A$, and assume that $a$ is positive. Then
$ax$ belongs to $\ker(\varphi)$ if and only if $\id_{(0,1]}\otimes ax$ belongs to $\ker(\pi)$. Denote by
$t^{1/2}$ the map $(0,1]\to (0,1]$ given by $x\mapsto \sqrt{x}$. By functional calculus, $t^{1/2}\otimes a^{1/2}$
belongs to $\ker(\pi)$. It follows that
$$\id_{(0,1]}\otimes ax=\left(t^{1/2}\otimes a^{1/2}\right)\left(t^{1/2}\otimes a^{1/2}x\right)\in \ker(\pi)$$
since $\ker(\pi)$ is an ideal in $C_0((0,1])\otimes A$, and hence $\varphi(ax)=\pi(\id_{(0,1]}\otimes ax)=0$.
A similar argument shows that $\varphi(xa)=0$ as well, proving that $\ker(\varphi)$ is a two-sided ideal in $A$.\end{proof}

\begin{cor} Adopt the notation of Definition \ref{def finite Rdim} above. Then the order zero maps
$\varphi_0,\ldots,\varphi_d$ are either zero or injective.\end{cor}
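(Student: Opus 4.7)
The plan is to combine the preceding lemma, which identifies $\ker(\varphi_i)$ as a closed two-sided ideal, with the equivariance of each $\varphi_i$ to argue that this ideal must be translation-invariant, and then to observe that the only translation-invariant closed two-sided ideals of $C(G)$ are $\{0\}$ and $C(G)$ itself.

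More concretely, fix $i\in\{0,\ldots,d\}$ and set $J=\ker(\varphi_i)\subseteq C(G)$. By the previous lemma, $J$ is a closed two-sided ideal of $C(G)$. Since $\varphi_i$ is equivariant with respect to $\texttt{Lt}$ on the source and $\alpha_\I$ on the target, and since automorphisms send $0$ to $0$, for every $f\in J$ and every $g\in G$ we have $\varphi_i(\texttt{Lt}_g(f))=(\alpha_\I)_g(\varphi_i(f))=0$, so $\texttt{Lt}_g(J)\subseteq J$. Thus $J$ is invariant under left translation.

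Now closed two-sided ideals of $C(G)$ are in bijection with open subsets of $G$ via $J\leftrightarrow U=\{g\in G:f(g)\neq 0\text{ for some }f\in J\}$, and $J$ is translation-invariant if and only if $U$ is. Because left translation acts transitively on $G$, the only invariant open subsets of $G$ are $\varnothing$ and $G$, so $J=\{0\}$ or $J=C(G)$. In the first case $\varphi_i$ is injective, and in the second case $\varphi_i=0$, which is precisely the dichotomy claimed.

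I do not anticipate any real obstacle here; the main point is simply to recognize that equivariance of an order zero map forces its kernel to be invariant, after which transitivity of the left regular action on $G$ finishes the argument. The one small subtlety to be careful about is that the target algebra $A_{\I,\alpha}\cap A'$ need not carry a \emph{continuous} action of $G$ in general, but continuity is irrelevant for the statement $\varphi_i\circ\texttt{Lt}_g=(\alpha_\I)_g\circ\varphi_i$ and for the fact that $(\alpha_\I)_g$ is a $\ast$-automorphism that sends $0$ to $0$, so the argument goes through verbatim.
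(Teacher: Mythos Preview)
Your proposal is correct and follows essentially the same approach as the paper's proof: both argue that the kernel of each $\varphi_j$ is a translation-invariant ideal of $C(G)$ (using equivariance together with the preceding lemma), and then invoke the fact that the only such ideals are $\{0\}$ and $C(G)$. You simply spell out in more detail why translation-invariant ideals of $C(G)$ are trivial (via the correspondence with invariant open subsets and transitivity of the action), and you add a remark about continuity of $\alpha_\I$ being irrelevant, but the underlying argument is the same.
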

\begin{proof} For $j=0,\ldots,d$, the kernel $I_j$ of $\varphi_j$ is a translation invariant ideal in $C(G)$,
since $\varphi_j$ is equivariant. The result now follows from the fact that the only translation invariant
ideals of $C(G)$ are $\{0\}$ and $C(G)$.\end{proof}

In particular, if $\dimRok(\alpha)=d<\I$, then the maps $\varphi_0,\ldots,\varphi_d$ from Definition
\ref{def finite Rdim} are injective.\\
\ \\
\indent We start by presenting some permanence properties for actions of compact groups with finite Rokhlin
dimension. Not surprisingly, finite Rokhlin dimension is far more flexible than the \Rp, and it is
preserved by several constructions. Most notably, finite Rokhlin dimension for finite dimensional compact groups
(in particular, for Lie groups) is inherited by the restriction to any closed subgroup, except that the actual
dimension may increase.\\
\indent We begin with a technical lemma which characterizes finite Rokhlin dimension in terms of elements in the
\ca\ itself, rather than its central sequence algebra.

\begin{lemma}\label{lem: characterization of finite Rdim}
Let $G$ be a compact group, let $A$ be a separable \uca, and let $\alpha\colon G\to\Aut(A)$ be an
action of $G$ on $A$. Let $d$ be a non-zero integer.
\be\item We have $\dimRok (\alpha)\leq d$ if and only if for every $\ep>0$, for every finite subset $S$ of $C(G)$,
and for every compact subset $F$ of $A$,
there exist completely positive contractive maps $\psi_0,\ldots,\psi_d\colon C(G)\to A$
satisfying the following conditions:
\be\item $\|\psi_j(f)a-a\psi_j(f)\|<\ep$ for all $j=0,\ldots,d$, for all $f$ in $S$, and all $a$ in $F$.
\item $\|\psi_j(\texttt{Lt}_g(f))-\alpha_g(\psi_j(f))\|<\ep$ for all $j=0,\ldots,d$, for all $g$ in $G$, and
for all $f$ in $S$.
\item $\left\|\psi_j(f_1)\psi_j(f_2)\right\|<\ep$ whenever $f_1$ and $f_2$ in $S$ are orthogonal.
\item $\left\|\sum\limits_{j=0}^d\psi_j(1_{C(G)})-1_A\right\|<\ep$.\ee
\item We have $\cdimRok (\alpha)\leq d$ if and only if for every $\ep>0$, for every finite subset $S$ of $C(G)$,
and for every compact subset $F$ of $A$, there exist completely positive contractive maps
$\psi_0,\ldots,\psi_d\colon C(G)\to A$ satisfying the conditions listed above in addition to
$$\|\psi_j(f_1)\psi_k(f_2)-\psi_k(f_2)\psi_j(f_1)\|<\ep$$
for all $j,k=0,\ldots,d$ and all $f_1$ and $f_2$ in $S$.
\ee
\end{lemma}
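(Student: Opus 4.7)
The statement asserts an equivalence between finite Rokhlin dimension phrased in terms of the central sequence algebra $A_{\I, \alpha} \cap A'$, and an approximate version phrased inside $A$ itself. The proof will be a standard reindexing argument, using Choi--Effros lifting (applicable because $C(G)$ is nuclear) in one direction and separability of $A$ in the other. Since the commuting-towers case differs only by an additional approximate commutation condition that translates faithfully under either half of the argument, I will focus on part (1); the modifications for part (2) are routine.

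For the forward direction, assume $\dimRok(\alpha) \leq d$ and fix witnessing equivariant order zero maps $\varphi_0, \dots, \varphi_d \colon C(G) \to A_{\I, \alpha} \cap A'$ with $\sum_j \varphi_j(1) = 1$. Since the quotient map $\ell^\I_\alpha(\N, A) \to A_{\I, \alpha}$ is surjective and $C(G)$ is nuclear, Choi--Effros provides each $\varphi_j$ with a cpc lift $\widetilde{\psi}_j \colon C(G) \to \ell^\I_\alpha(\N, A)$ satisfying $\kappa_A \circ \widetilde{\psi}_j = \varphi_j$. Write $\widetilde{\psi}_j(f) = (\psi_j^{(n)}(f))_n$. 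Given $\ep > 0$, a finite $S \subseteq C(G)$, and a compact $F \subseteq A$, I would show that $(\psi_j^{(n)})_j$ satisfies (i)--(iv) for all sufficiently large $n$: clause (i) from $\varphi_j(f) \in A'$; clause (iv) from $\sum_j \varphi_j(1) = 1$; clause (iii) from the order zero property of $\varphi_j$ applied to the finitely many orthogonal pairs in $S$. For (ii), pointwise-in-$g$ convergence follows from equivariance of $\varphi_j$, and uniformity over $g \in G$ is obtained from compactness of $G$ together with equicontinuity of $g \mapsto \alpha_g(\psi_j^{(n)}(f))$ in $n$, which is automatic from the fact that the lift takes values in $\ell^\I_\alpha(\N, A)$.

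For the reverse direction, using separability of $A$, fix an increasing sequence of compact sets $F_n \subseteq A$ with dense union, and an increasing sequence of finite sets $S_n \subseteq C(G)$ whose union is dense in $C(G)$ and, crucially, contains a countable family of orthogonal pairs whose closure in $C(G) \times C(G)$ is all of $\{(f_1, f_2) : f_1 f_2 = 0\}$. Applying the hypothesis with $\ep = 1/n$, $S = S_n$, and $F = F_n$ produces cpc maps $\psi_j^{(n)} \colon C(G) \to A$, and I would define $\varphi_j \colon C(G) \to A_\I$ by $\varphi_j(f) = \kappa_A\bigl((\psi_j^{(n)}(f))_n\bigr)$. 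Each $\varphi_j$ is cpc; condition (i) places $\varphi_j(C(G))$ in $A_\I \cap A'$; condition (ii), extended by density, gives equivariance; condition (iv) gives $\sum_j \varphi_j(1) = 1$; and condition (iii) applied to the distinguished family of orthogonal pairs, then extended by norm-continuity of $\varphi_j$, yields the order zero property. Finally, equivariance of $\varphi_j$ together with continuity of $g \mapsto \texttt{Lt}_g(f)$ forces $g \mapsto (\alpha_\I)_g(\varphi_j(f)) = \varphi_j(\texttt{Lt}_g(f))$ to be continuous, placing $\varphi_j(C(G))$ inside $A_{\I, \alpha}$ as required.

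The principal technical obstacle is ensuring that $\varphi_j$ in the reverse direction is genuinely order zero rather than merely approximately so on the dense set $\bigcup_n S_n$. This is handled by the observation that $\{(f_1, f_2) \in C(G) \times C(G) : f_1 f_2 = 0\}$ is a separable closed subset of $C(G) \times C(G)$, hence admits a countable dense subset that can be incorporated into the $S_n$; once $\varphi_j(f_1) \varphi_j(f_2) = 0$ holds on this dense family, norm-continuity of $\varphi_j$ propagates the identity to all orthogonal pairs. For the commuting-towers case, the extra clause $\|\psi_j(f_1) \psi_k(f_2) - \psi_k(f_2) \psi_j(f_1)\| < \ep$ translates in the reverse direction, via the same density argument, to exact commutativity of the ranges $\varphi_j(C(G))$ and $\varphi_k(C(G))$ in $A_\I$, and in the forward direction, commutation of the ranges of the $\varphi_j$ gives the corresponding asymptotic commutation of the lifts.
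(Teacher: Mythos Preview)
Your proposal is correct and follows essentially the same route as the paper: Choi--Effros lifting of the $\varphi_j$ through $\ell^\I_\alpha(\N,A)\to A_{\I,\alpha}$ for the forward direction, and assembling a sequence of approximants along increasing finite sets for the reverse direction. If anything, you are more careful than the paper on two points it glosses over as ``easily seen'' or ``straightforward'': the uniformity of condition (b) in $g\in G$ (which you correctly extract from the fact that the lift lands in $\ell^\I_\alpha(\N,A)$), and the passage from approximate orthogonality on $\bigcup_n S_n$ to genuine order zero for $\varphi_j$ (which you handle by arranging the $S_n$ to contain a dense family of orthogonal pairs).
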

\begin{proof} We prove (1) first. Assume that for every $\ep>0$, for every finite subset $S$ of $C(G)$,
and every finite subset $F$ of $A$,
there exist completely positive contractive maps $\varphi_0,\ldots,\varphi_d\colon C(G)\to A$
satisfying the conditions of the statement. Choose increasing sequences $(F_n)_{n\in\N}$ and $(S_n)_{n\in\N}$
of finite subsets of $A$ and $C(G)$, whose union is dense in $A$ and in $C(G)$, respectively. Let
$\psi_0^{(n)},\ldots,\psi_d^{(n)}\colon C(G)\to A$ be as in the statement for the choices $F_n$ and
$\frac{1}{n}$. For $j=0,\ldots,d$, denote by $\varphi_j\colon C(G)\to A_{\I}$ the linear map given by
$$\varphi_j\left(\kappa_A((a_n)_{n\in\N})\right)=\kappa_A\left(\left(\psi_j^{(n)}(a_n)\right)_{n\in\N}\right)$$
for all $(a_n)_{n\in\N}$ in $\ell^\I(\N,A)$. Then $\varphi_j$ is easily seen to be completely positive
contractive and order zero. It is also straightforward to check that its image is contained in
$A_{\I,\alpha}\cap A'$, and that it is equivariant. Finally, it is immediate that $\sum\limits_{j=0}^d
\varphi_j(1)=1$.\\
\indent Conversely, suppose that $\alpha$ has Rokhlin dimension at most $d$. Choose completely
positive contractive order zero maps $\varphi_0,\ldots,\varphi_d\colon C(G)\to A_{\I,\alpha}\cap A'$ as
in the definition if finite Rokhlin dimension. Fix $j$ in $\{0,\ldots,d\}$. By Choi-Effros, there exist
completely positive contractive maps $\psi_j=(\psi_j^{(n)})_{n\in\N}\colon C(G)\to \ell^\I(\N,A)$ for
$j=0,\ldots,d$ such that for all $f,f_1,f_2$ in $C(G)$ with $f_1$ orthogonal to $f_2$, for all $a$ in
$A$, and for all $g$ in $G$, we have
\begin{align*}
\left\|\psi^{(n)}_j(f)a-a\psi^{(n)}_j(f)\right\|&\to 0\\
\left\|\psi^{(n)}_j(\mbox{\texttt{Lt}}_g(f))-\alpha_g(\psi^{(n)}_j(f))\right\|&\to  0\\
\left\|\psi^{(n)}_j(f_1)\psi^{(n)}_j(f_2)\right\|&\to 0 \\
\left\|\sum\limits_{j=0}^d\psi^{(n)}_j(1_{C(G)})-1_A\right\| & \to 0
\end{align*}
Given $\ep>0$, given a finite subset $S$ of $C(G)$ and given a finite subset $F$ of $A$, choose a
positive integer $n$ such that the quantities above are all less than $\ep$ on the elements of $S$
and $F$, respectively, and set $\psi_j=\psi^{(n)}_j$ for $j=0,\ldots,d$. This finishes the proof of (1).\\
\indent The proof of (2) is analogous. In particular, in the ``only if'' implication, one has to use
that the completely positive contractive maps $\psi_j=(\psi_j^{(n)})_{n\in\N}\colon C(G)\to \ell^\I(\N,A)$
for $j=0,\ldots,d$ obtained from Choi-Effros, moreover satisfy
$$\lim_{n\to\I} \left\|\psi^{(n)}_j(f_1)\psi^{(n)}_k(f_2)-\psi^{(n)}_k(f_2)\psi^{(n)}_j(f_1)\right\|=0$$
for all $f_1$ and $f_2$ in $C(G)$, and all $j,k=0,\ldots,d$. We omit the details.
\end{proof}

Regarding finite Rokhlin dimension as a noncommutative analog of freeness of group actions on topological
spaces, we give the following interpretation of Theorem \ref{thm: permanence properties} below. Part (1) is the analog of the fact that a diagonal action on a product space is free if only of the factors is free; part (2) is the analog of the fact that the restriction of a free action to an invariant closed subset is also free; and part (3) is the analog
of the fact that an inverse limit of free actions is again free.

\begin{thm}\label{thm: permanence properties}
Let $A$ be a unital \ca\, let $G$ be a compact group, and let $\alpha\colon G\to\Aut(A)$ be a continuous
action of $G$ on $A$.
\be
\item Let $B$ be a unital \ca, and let $\beta\colon G\to\Aut(B)$ be a continuous action of $G$ on $B$.
Let $A\otimes B$ be any \ca\ completion of the algebraic tensor product of $A$ and $B$ for which the tensor
product action $g\mapsto (\alpha\otimes\beta)_g=\alpha_g\otimes\beta_g$ is defined. Then
$$\dimRok(\alpha\otimes\beta)\leq \min \left\{\dimRok(\alpha),\dimRok(\beta)\right\}$$
and
$$\cdimRok(\alpha\otimes\beta)\leq \min \left\{\cdimRok(\alpha),\cdimRok(\beta)\right\}.$$
\item Let $I$ be an $\alpha$-invariant ideal in $A$, and denote by $\overline{\alpha}
\colon G\to \Aut(A/I)$ the induced action on the quotient. Then
$$\dimRok(\overline{\alpha})\leq \dimRok(\alpha)$$
and
$$\cdimRok(\overline{\alpha})\leq \cdimRok(\alpha).$$\ee
Furthermore,
\be
\setcounter{enumi}{2}
\item Let $(A_n,\iota_n)_{n\in\N}$ be a direct system of unital \ca s with unital
connecting maps, and for each $n\in\N$, let $\alpha^{(n)}\colon G\to\Aut(A_n)$ be a
continuous action such that $\iota_n\circ\alpha^{(n)}_g=\alpha^{(n+1)}_g\circ\iota_n$ for all $n\in\N$ and all
$g\in G$. Suppose that $A=\varinjlim A_n$ and $\alpha=\varinjlim \alpha^{(n)}$. Then
$$\dimRok(\alpha)\leq \liminf\limits_{n\to\I} \dimRok(\alpha^{(n)})$$
and
\[\cdimRok(\alpha)\leq \liminf\limits_{n\to\I} \cdimRok(\alpha^{(n)}).\]\ee \end{thm}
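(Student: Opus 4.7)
The plan is to handle the three parts separately, using the two functoriality lemmas for central sequence algebras (Lemma \ref{tensor product and central sequence algebra} and Lemma \ref{surjective maps and central sequence algebra}) for parts (1) and (2), and the elementwise characterization of Lemma \ref{lem: characterization of finite Rdim} for part (3).

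For part (1), assume $\dimRok(\alpha)=d<\infty$, and fix equivariant completely positive contractive order zero maps $\varphi_0,\ldots,\varphi_d\colon (C(G),\texttt{Lt})\to (A_{\I,\alpha}\cap A',\alpha_\I)$ with $\sum_{j=0}^d\varphi_j(1)=1$. By Lemma \ref{tensor product and central sequence algebra}, the first-factor inclusion $\iota\colon A\to A\otimes B$ induces an equivariant unital homomorphism $\iota_\I\colon A_{\I,\alpha}\cap A'\to (A\otimes B)_{\I,\alpha\otimes\beta}\cap (A\otimes B)'$. I will take $\iota_\I\circ\varphi_j$ as the required tower maps for $\alpha\otimes\beta$; these are completely positive contractive and order zero (order zero is preserved by composition with a homomorphism), equivariant, and $\sum_{j=0}^d\iota_\I(\varphi_j(1))=\iota_\I(1)=1$. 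The analogous argument with $b\mapsto 1\otimes b$ gives the bound involving $\dimRok(\beta)$. The commuting tower version is immediate since post-composition with a homomorphism preserves commutation of ranges.

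For part (2), the quotient map $\pi\colon A\to A/I$ is surjective and equivariant, so Lemma \ref{surjective maps and central sequence algebra} produces an equivariant unital homomorphism $\pi_\I\colon A_{\I,\alpha}\cap A'\to (A/I)_{\I,\overline{\alpha}}\cap (A/I)'$, and I compose the Rokhlin tower maps for $\alpha$ with $\pi_\I$ exactly as in part (1). Commuting towers pass through for the same reason as above.

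Part (3) is the most delicate and I expect it to be the main obstacle, because the assignment $A\mapsto A_\I\cap A'$ does not commute with inductive limits, so I cannot simply lift Rokhlin towers through $(A_n)_\I\cap A_n'\to A_\I\cap A'$. Instead, I will use the elementwise characterization of Lemma \ref{lem: characterization of finite Rdim}. Set $d=\liminf_{n\to\I}\dimRok(\alpha^{(n)})$; if $d=\I$ the inequality is trivial, so assume $d<\I$. Given $\ep>0$, a finite subset $S\subseteq C(G)$, and a finite subset $F\subseteq A$, I will use density of $\bigcup_n\iota_{n,\I}(A_n)$ in $A$ to find $n_0$ and a finite subset $F_{n_0}\subseteq A_{n_0}$ with $\iota_{n_0,\I}(F_{n_0})$ within $\ep$ of $F$. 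Using that $\liminf\dimRok(\alpha^{(n)})=d$, I can choose $n\geq n_0$ with $\dimRok(\alpha^{(n)})\leq d$. Denote by $F_n\subseteq A_n$ the image of $F_{n_0}$ under the connecting maps; since these are equivariant unital $\ast$-homomorphisms, $\iota_{n,\I}(F_n)$ is still within $\ep$ of $F$. Apply Lemma \ref{lem: characterization of finite Rdim} to $\alpha^{(n)}$ with the data $(\ep,S,F_n)$ to obtain completely positive contractive maps $\psi_0^{(n)},\ldots,\psi_d^{(n)}\colon C(G)\to A_n$ satisfying the four approximate conditions. Then I set $\psi_j=\iota_{n,\I}\circ\psi_j^{(n)}\colon C(G)\to A$; these are completely positive contractive, and a triangle inequality estimate (using that $\iota_{n,\I}$ is an equivariant unital $\ast$-homomorphism) shows that the approximate commutation, equivariance, orthogonality, and unitality conditions hold for the data $(S,F)$ up to $3\ep$. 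The commuting tower case is handled identically by including the extra commutation condition of Lemma \ref{lem: characterization of finite Rdim}(2) at the level of $A_n$ and observing that it descends to $A$ under $\iota_{n,\I}$.
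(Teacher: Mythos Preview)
Your proposal is correct and follows essentially the same approach as the paper: parts (1) and (2) are handled by composing the Rokhlin tower maps with the equivariant unital homomorphisms on central sequence algebras provided by Lemmas~\ref{tensor product and central sequence algebra} and~\ref{surjective maps and central sequence algebra}, and part (3) is handled via the elementwise characterization of Lemma~\ref{lem: characterization of finite Rdim} by approximating the finite test set $F$ inside some $A_n$ with $\dimRok(\alpha^{(n)})\leq d$ and pushing the resulting approximate towers into $A$ via $\iota_{n,\infty}$. The only cosmetic difference is that the paper passes to a subsequence along which $\dimRok(\alpha^{(n)})\leq d$ holds for all $n$, whereas you choose $n\geq n_0$ directly and push $F_{n_0}$ forward; these are equivalent.
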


\begin{proof} We only prove the results for the noncommuting tower version; the proofs for the commuting tower
version are analogous and are left to the reader.\\
\indent Part (1). The statement is immediate if both $\alpha$ and $\beta$ have infinite \Rdim, so assume that
$\dimRok(\alpha)=d<\I$. Then there are equivariant completely positive contractive order zero maps
$$\varphi_0,\ldots,\varphi_d\colon C(G)\to A_{\I,\alpha}\cap A',$$
such that $\varphi_0(1)+\ldots+\varphi_d(1)=1$. Denote by $\iota\colon A\to A\otimes B$ the canonical embedding
as the first tensor factor. By Lemma \ref{tensor product and central sequence algebra}, this inclusion induces
a unital \hm\ $\iota_\I\colon A_\I\cap A'\to (A\otimes B)_{\I,\alpha\otimes\beta}\cap (A\otimes B)'$, and $\iota_\I$ is moreover
equivariant with respect to $\alpha_\I$ and $(\alpha\otimes\beta)_\I$. For $j=0,\ldots d$, set
$$\psi_j=\iota_\I\circ \varphi_j\colon C(G)\to (A\otimes B)_{\I,\alpha\otimes\beta}\cap (A\otimes B)'.$$
Then $\psi_0,\ldots,\psi_d$ are equivariant completely positive contractive order zero maps, and
$\psi_0(1)+\ldots+\psi_d(1)=\varphi_0(1)+\ldots+\varphi_d(1)=1$. Hence $\dimRok(\alpha\otimes\beta)\leq d$ and
the result follows.\\
\ \\
\indent Part (2). The statement is immediate if $\alpha$ has infinite \Rdim, so suppose that there exist a
positive integer $d$ in $\N$ and equivariant completely positive contractive order zero maps
$$\varphi_0,\ldots,\varphi_d\colon C(G)\to A_{\I,\alpha}\cap A'$$
such that $\varphi_0(1)+\ldots+\varphi_d(1)=1$. Denote by $\pi\colon A\to A/I$ the quotient map. Lemma
\ref{surjective maps and central sequence algebra} implies that $\pi$ induces a unital \hm
$$\pi_\I\colon A_\I\cap A'\to (A/I)_{\I,\overline{\alpha}}\cap (A/I)'.$$
Moreover, this homomorphism is easily seen to be equivariant. For
$j=0,\ldots,d$, set $\psi_j=\pi_\I\circ\varphi_j\colon C(G) \to (A/I)_{\I,\overline{\alpha}}\cap (A/I)'$. Then $\psi_j$ is an
equivariant completely positive contractive order zero map for all $j=0,\ldots,d$, and $\psi_0(1)+\ldots+\psi_d(1)
=\varphi_0(1)+\ldots+\varphi_d(1)=1$. It follows that $\dimRok(\overline{\alpha})\leq d$, as desired.\\
\ \\
\indent Part (3). The statement is immediate if $\liminf\limits_{n\to\I} \dimRok(\alpha^{(n)})=\I$. We shall therefore
assume that there exists $d\in\N$ such that for all $m\in\N$, there is $n\geq m$ in $\N$ with
$\dimRok(\alpha^{(n)})\leq d$. By passing to a subsequence, we may also assume that $\dimRok(\alpha^{(n)})\leq d$
for all $n\in\N$. \\
\indent We use Lemma \ref{lem: characterization of finite Rdim}. Let $\ep>0$, let $S$ be a finite subset of $C(G)$,
and let $F$ be a finite subset of $A$. With $L=\mbox{card}(F)$, write $F=\{a_1,\ldots,a_L\}$, and find a positive
integer $n$ in $\N$ and elements $b_1,\ldots,b_L$ in $A_n$ such that $\|a_j-b_j\|<\frac{\ep}{2}$ for all
$j=1,\ldots,L$. Choose completely positive contractive maps $\psi_0,\ldots,\psi_d\colon C(G)\to A_n$ satisfying
conditions (a) through (d) in part (1) of Lemma \ref{lem: characterization of finite Rdim} for $\frac{\ep}{2}$ and
the finite set $F'=\{b_1,\ldots,b_L\}\subseteq A_n$. If $\iota_{n,\I}\colon A_n\to A$ denotes the canonical map,
then it is easy to check that the completely positive contractive maps
$$\iota_{n,\I}\circ \psi_0,\ldots,\iota_{n,\I}\circ \psi_d\colon C(G)\to A$$
satisfy conditions (a) through (d) in part (1) of Lemma \ref{lem: characterization of finite Rdim} for $\ep$ and
the finite set $F$. This shows the result in the case of non commuting towers. \\
\indent The proof in the commuting towers case is analogous, using also the extra condition in part (2) of Lemma
\ref{lem: characterization of finite Rdim}. We omit the details.
\end{proof}

We point out that finite Rokhlin dimension does not pass to invariant subalgebras, even if the original action
has the \Rp, the \ca\ is $\Ot$, the invariant subalgebra is isomorphic to $\Ot$ and the action restricted to
the invariant subalgebra is pointwise outer, as the next example shows.

\begin{eg}\label{eg: gauge action Ot}
Denote by $s_1$ and $s_2$ the canonical generators of the Cuntz algebra $\mathcal{O}_2$.
Let $\gamma \colon \T\to\Aut(\Ot)$ be the gauge action, this is, the action determined by $\gamma_\zt(s_j)=\zt s_j$
for all $\zt\in\T$ and for $j=1,2$. Choose an isomorphism $\varphi \colon \Ot\otimes\Ot\to\Ot$ and let
$\alpha \colon\T\to\Aut(\Ot)$ be given by
\begin{align*}  \alpha_\zt= \varphi\circ(\gamma_\zt\otimes\id_\Ot)\circ\varphi^{-1}\end{align*}
for all $\zt\in\T$. It is shown in Example 3.7 in \cite{gardella classification on Otwo} that $\alpha$ has
the Rokhlin property. On the other hand, it is immediate that the restriction of $\alpha$ to the invariant
subalgebra $\varphi(1\otimes \Ot)$ does not have finite Rokhlin dimension, since it is conjugate to the trivial action on $\Ot$. We claim that the
restriction of $\alpha$ to the invariant subalgebra $\varphi(\Ot\otimes 1)$,
does not have finite Rokhlin dimension with commuting towers. Said restriction is conjugate to the gauge action
$\gamma$ on $\mathcal{O}_2$. To show that $\cdimRok (\gamma)=\I$, it is enough to show that no power of the augmentation
ideal $I_\T$ annihilates $K_\ast^\T(\mathcal{O}_2)$. Recall that the crossed product of $\mathcal{O}_2$ by the gauge action is isomorphic
to $M_{2^\I}\otimes\K$. For $n$ in $\N$, and under the canonical identifications given by Julg's Theorem
(here reproduced as Theorem \ref{Julg}), we have
$$I^n_\T\cdot K_\ast^\T(\mathcal{O}_2)\cong \text{Im}\left(\left(\id_{K_0(M_{2^\I}\otimes\K)}-K_\ast(\widehat{\gamma})\right)^n\right).$$
It is a well known fact that $\widehat{\gamma}$ is the unilateral shift on $M_{2^\I}\otimes\K$, whose induced action
on $K_0$ is multiplication by 2. Thus $\id_{K_0(M_{2^\I}\otimes\K)}-K_0(\widehat{\gamma})$ is multiplication by -1, which
is an isomorphism of $\Z\left[\frac{1}{2}\right]\cong K_\ast^\T(\mathcal{O}_2)$. In particular, any of its powers
is also an isomorphism, and thus
$$I^n_\T\cdot K_0^\T(\mathcal{O}_2)=K_0^\T(\mathcal{O}_2)\neq \{0\}$$
for all $n$ in $\N$. This proves the claim.
\end{eg}

\subsection{Restrictions to closed subgroups}
We now turn to restrictions of actions in relation to Rokhlin dimension. The following result is the analog of the
fact that the restriction of a free action to a (closed) subgroup is again free.

\begin{thm}\label{restrictions} Let $A$ be a \uca, let $G$ be a finite dimensional compact group, let $H$
be a closed subgroup of $G$, and let $\alpha\colon G\to\Aut(A)$ be a continuous action. Then
$$\dimRok(\alpha|_H)\leq (\dim(G)-\dim(H)+1)(\dimRok(\alpha)+1)-1$$
and
$$\cdimRok(\alpha|_H)\leq (\dim(G)-\dim(H)+1)(\cdimRok(\alpha)+1)-1$$
\end{thm}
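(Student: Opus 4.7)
The plan is to take the equivariant CPC order zero maps $\varphi_0, \ldots, \varphi_d \colon C(G) \to A_{\I,\alpha} \cap A'$ witnessing $\dimRok(\alpha) \le d$ and construct, for each $j$, a family of $m+1 = \dim(G) - \dim(H) + 1$ equivariant CPC order zero maps $\psi_{j,0}, \ldots, \psi_{j,m} \colon C(H) \to A_{\I,\alpha|_H} \cap A'$ satisfying $\sum_{i=0}^{m} \psi_{j,i}(1) = \varphi_j(1)$. Since continuity of the $G$-action on $A_\I$ restricts to continuity of the $H$-action, the inclusion $A_{\I,\alpha} \cap A' \subseteq A_{\I,\alpha|_H} \cap A'$ is automatic, so the resulting $(d+1)(m+1)$ maps $\psi_{j,i}$ will witness the claimed bound.

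The geometric input I will need is a finite open cover $V_0, \ldots, V_m$ of $H \backslash G$ in which each $V_i$ admits a continuous local section $s_i \colon V_i \to G$ of the quotient map $\pi \colon G \to H \backslash G$, together with a subordinate partition of unity $\tilde\rho_0, \ldots, \tilde\rho_m$ on $H\backslash G$. To produce this, I start from an open cover of $H\backslash G$ by sets over which $\pi$ admits local sections, pass to a finite refinement of multiplicity at most $\dim(H\backslash G) + 1 \le m+1$, and then group its members into $m+1$ pairwise disjoint unions via a standard coloring of the nerve. Setting $\rho_i = \tilde\rho_i \circ \pi \in C(G)$, each $\rho_i$ is left $H$-invariant, and the formula $\phi_i(g) = g \cdot s_i(\pi(g))^{-1}$ defines a continuous map $\phi_i \colon \pi^{-1}(V_i) \to H$ satisfying the crucial intertwining $\phi_i(hg) = h \phi_i(g)$ for $h \in H$.

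Given this data, I define
\[
\psi_{j,i}(f) = \varphi_j\!\left(\rho_i \cdot (f \circ \phi_i)\right), \qquad f \in C(H),
\]
where $\rho_i \cdot (f \circ \phi_i)$ is extended by zero off $\pi^{-1}(V_i)$. The map $f \mapsto \rho_i \cdot (f \circ \phi_i)$ is a positive scaling of the $\ast$-homomorphism $f \mapsto f \circ \phi_i$, hence is CPC order zero, and post-composing with $\varphi_j$ preserves this property. The $H$-invariance of $\rho_i$, the equivariance of $\phi_i$, and the $G$-equivariance of $\varphi_j$ combine to give $\psi_{j,i}(\texttt{Lt}_h^H f) = (\alpha|_H)_\I(\psi_{j,i}(f))$ for all $h \in H$, while $\sum_{i=0}^m \psi_{j,i}(1) = \varphi_j(\sum_i \rho_i) = \varphi_j(1)$, so that summing over $j$ yields $1$.

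For the commuting tower version, the range of each $\psi_{j,i}$ sits inside the range of the corresponding $\varphi_j$, and Theorem \ref{cpc order zero and homs} shows that a CPC order zero map out of a commutative $C^*$-algebra (such as $C(G)$) sends commuting pairs to commuting pairs; thus commutativity of the ranges of the $\varphi_j$ transfers to commutativity of the ranges of the $\psi_{j,i}$. The principal obstacle I expect is purely geometric: for compact Lie groups, both the existence of continuous local sections for $\pi \colon G \to H\backslash G$ and the dimension identity $\dim(H\backslash G) = \dim(G) - \dim(H)$ are standard consequences of the principal bundle structure, but in the general finite dimensional compact case they require genuine care --- presumably this is why Hannes Thiel's assistance with local cross sections for free actions is acknowledged in the introduction.
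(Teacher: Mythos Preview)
Your argument is correct and follows essentially the same route as the paper: both proofs construct $(\dim(G)-\dim(H)+1)$ $H$-equivariant completely positive contractive order zero maps $C(H)\to C(G)$ summing to $1$, using local cross sections for the principal $H$-bundle $G\to H\backslash G$ together with a $(\dim(G)-\dim(H))$-decomposable open cover of the base and a subordinate partition of unity, and then compose these with the Rokhlin maps for $\alpha$. Your explicit formula $\phi_i(g)=g\,s_i(\pi(g))^{-1}$ is exactly the $H$-coordinate in the local trivialization the paper uses, and the paper handles the geometric input you flag by invoking Karube's theorem (that $\pi\colon G\to G/H$ is a principal $H$-bundle for finite dimensional compact $G$) and Proposition~1.5 of Kirchberg--Winter for the decomposable refinement; note incidentally that your choice of $H\backslash G$ is the right one for the left $H$-action.
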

\begin{proof} Without loss of generality, we may assume that $\dimRok(\alpha)$ is finite.\\
\indent Being a closed subspace of a finite dimensional subspace, $H$ is finite dimensional.
Let $d=\dim(G/H)=\dim(G)-\dim(H)$. We will produce $d+1$ completely positive contractive $H$-equivariant
order zero maps $\varphi_0,\ldots,\varphi_d\colon C(H)\to C(G)$ with $\varphi_0(1)+\cdots+\varphi_d(1)=1$.
Once we have done this, and since these maps will obviously have commuting ranges, both claims will follow by
composing each of the maps $\varphi_0,\ldots,\varphi_d$ with the $\dimRok(\alpha)+1$ maps
as in the definition of finite Rokhlin dimension for $\alpha$. The result will then be $(d+1)(\dimRok(\alpha)+1)$
maps which will satisfy the definition of finite Rokhlin dimension for $\alpha|_{H}$.\\
\indent Denote by $\pi\colon G\to G/H$ the canonical surjection. By part (1) of Theorem 2 in \cite{karube},
the map $\pi\colon G\to G/H$ is a principal $H$-bundle. In particular, there exist local cross-sections
from the orbit space $G/H$ to $G$. For every $x\in G$, let $V_{\pi(x)}$ be a neighborhood
of $\pi(x)$ in $G/H$ where $\pi$ is trivial. Using compactness of $G/H$, let $\mathcal{U}$ be a finite subcover of $G/H$. Use
Proposition 1.5 in \cite{kirchberg winter} to refine $\mathcal{U}$ to a $d$-decomposable covering $\mathcal{V}$. In other words,
$\mathcal{V}$ can be written as the disjoint union of $d+1$ families $\mathcal{V}_0\cup \cdots\cup \mathcal{V}_d$ of open sets, in such a
way that for every $k=0,\ldots,d$, the elements of $\mathcal{V}_k$ are pairwise disjoint.\\
\indent For $k=0,\ldots,d$, let
$V_k$ denote the union of all the open sets in $\mathcal{V}_k$, and note that there is a cross-section defined on $V_k$.
Let $\{f_0,\ldots,f_d\}$ be a partition of unity of $G/H$ subordinate to the cover $\{V_0,\ldots,V_d\}$.
Upon replacing $V_k$ with the open set $f_k^{-1}((0,1])\subseteq V_k$, we may assume that $V_k=f_k^{-1}((0,1])$ for all $k=0,\ldots,d$.
For each $k=0,\ldots,d$, set $U_k=\pi^{-1}(V_k)\subseteq G$, and observe that there is an equivariant homeomorphism
$U_k\cong V_k\times H$, where the $H$-action on $V_k\times H$ is diagonal with the trivial action on $V_k$ and translation
on $H$.
Define a continuous function $\phi_k\colon V_k\times H\cong U_k\to (0,1]\times H$ by
$$\phi_k(x,h)=(f_k(x),h)$$
for all $(x,h)$ in $V_k\times H\cong U_k$.
Then $\phi_k$ is continuous because the cross-section is continuous. Moreover, $\phi_k$ is clearly
equivariant. \\
\indent Identify $C_0((0,1])\otimes C(H)$ with $C_0((0,1]\times H)$, and for $k=0,\ldots,d$ define
$$\psi_k\colon C_0((0,1])\otimes C(H) \to C(G)$$
by
$$\psi_k(f)(x)=\left\{
                 \begin{array}{ll}
                   (f\circ \phi_k)(x), & \hbox{if $x\in U_k$;} \\
                   0, & \hbox{else.}
                 \end{array}
               \right.$$
Then $\psi_k$ is a homomorphism, and it is equivariant since $\phi_k$ is. The map $\varphi_k\colon C(H)\to
C(G)$ given by $\varphi_k(f)=\psi_k(\id_{(0,1]}\otimes f)$ for $f\in C(H)$ is an equivariant completely positive
contractive order zero map. Finally, using that $(f_k)_{k=0}^d$ is a partition of unity of $G/H$ at the last step, we have
$$\sum\limits_{k=0}^d\varphi_k(1)=\sum\limits_{k=0}^d\psi_k(\id_{(0,1]}\otimes 1)=\sum\limits_{k=0}^df_k=1.$$
It follows that the maps $\varphi_0,\ldots,\varphi_d$ have the desired properties, and the proof is finished.\end{proof}

In some cases, restricting to a subgroup does not increase the \Rdim. In the following proposition, the group
is not assumed to be finite dimensional.

\begin{prop} Let $A$ be a \uca, let $G$ be a compact group, and let $\alpha\colon G\to\Aut(A)$ be a continuous
action. Let $H$ be a closed subgroup of $G$, and assume that at least one of the following holds:
\be\item the coset space $G/H$ is zero dimensional (this is the case whenever $H$ has finite index in $G$).
\item $G=\prod\limits_{i\in I} G_i$ or $G=\bigoplus\limits_{i\in I} G_i$, and $H=G_j$ for some $j\in I$.
\item $H$ is the connected component of $G$ containing its unit. \ee
Then
$$\dimRok(\alpha|_H)\leq \dimRok(\alpha) \ \ \mbox{ and } \ \ \cdimRok(\alpha|_H)\leq \cdimRok(\alpha).$$\end{prop}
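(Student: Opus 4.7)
The plan is to reduce all three cases to a single common construction: in each case, I will exhibit an $H$-equivariant unital $\ast$-homomorphism $\varphi \colon C(H) \to C(G)$, where both algebras carry the left-translation action of $H$. Once this is done, given maps $\psi_0, \ldots, \psi_d \colon C(G) \to A_{\I,\alpha} \cap A'$ witnessing $\dimRok(\alpha) \leq d$ (with commuting ranges, if one is working in the commuting-tower setting), the composites $\psi_j \circ \varphi \colon C(H) \to A_{\I,\alpha|_H} \cap A'$ are $H$-equivariant completely positive contractive order zero maps with
\[
\sum_{j=0}^{d} (\psi_j \circ \varphi)(1) \;=\; \sum_{j=0}^{d} \psi_j(1) \;=\; 1,
\]
and they inherit commuting ranges from the $\psi_j$. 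This will yield $\dimRok(\alpha|_H) \leq \dimRok(\alpha)$ and $\cdimRok(\alpha|_H) \leq \cdimRok(\alpha)$. The verification is routine: composition of an order zero map with a $\ast$-homomorphism is order zero; $G$-equivariance of $\psi_j$ restricts to $H$-equivariance; and continuity of $\alpha$ implies continuity of $\alpha|_H$, so $A_{\I,\alpha} \subseteq A_{\I,\alpha|_H}$.

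By Gelfand duality, producing $\varphi$ is equivalent to constructing an $H$-equivariant continuous map $r \colon G \to H$ (with $H$ acting on itself by left translation), which in turn amounts to choosing a continuous section $s \colon H \backslash G \to G$ of the orbit map $\pi \colon G \to H \backslash G$. Indeed, given such a section, the formula $r(g) = g \cdot s(\pi(g))^{-1}$ takes values in $H$ and satisfies $r(hg) = h \cdot r(g)$.

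For case (2), writing $G = H \times K$ (or the analogous direct-sum decomposition) with $H = G_j$, the coordinate projection $G \to H$ is patently an $H$-equivariant continuous map, and may be taken as $r$ directly. For case (1), Karube's theorem (Theorem 2 in \cite{karube}) gives that $\pi$ is a locally trivial principal $H$-bundle; since $H \backslash G \cong G/H$ is a zero-dimensional compact Hausdorff space, every open cover admits a refinement to a finite partition by clopen sets, so a trivializing open cover can be refined to a finite partition of $H \backslash G$ into pairwise disjoint clopen pieces on each of which a local section exists. Patching these local sections along the disjoint clopen pieces produces a global continuous section, yielding $r$. Case (3) is immediate from case (1): for compact $G$, the component group $G/G^\circ$ is totally disconnected and compact, hence zero-dimensional.

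The only mildly delicate point is the bundle-trivialization argument in case (1), namely passing from the local triviality guaranteed by Karube's theorem to a global continuous section. The key enabling fact is the well-known property that any open cover of a zero-dimensional compact Hausdorff space admits a refinement by a partition into clopen sets, which permits the patching of local sections across disjoint clopen pieces with no continuity obstruction. Everything else is straightforward bookkeeping, and the commuting-tower case requires no additional argument beyond observing that post-composition with the fixed $\ast$-homomorphism $\varphi$ preserves commutation of ranges.
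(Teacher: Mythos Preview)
Your proof is correct and follows essentially the same strategy as the paper: reduce to constructing an $H$-equivariant continuous map $G\to H$ via a global section of the quotient map, then handle case (2) by projection and case (3) by reducing to case (1). The only minor difference is in case (1), where the paper invokes Mostert's theorem (Theorem 8 in \cite{mostert}) to obtain a global section over a zero-dimensional base directly, whereas you obtain local sections via Karube and then patch over a clopen refinement; these amount to the same argument.
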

\begin{proof} In all these cases, we will produce a unital $H$-equivariant homomorphism $C(H)\to C(G)$, where
the $H$ action on both $C(H)$ and $C(G)$ is given by left translation. This is easily seen to be equivalent
to the existence of a continuous map $\phi\colon G\to H$ such that $\phi(hg)=h\phi(g)$ for all $h\in H$ and
all $g\in G$. \\
\indent Assuming the existence of such a \hm\ $C(H)\to C(G)$, the result will follow by composing it with
the completely positive contractive order zero maps associated with $\alpha$, similarly to what was done in
parts (1) and (2) of Theorem \ref{thm: permanence properties}.\\
\indent (1). Assume that $G/H$ is zero-dimensional. By Theorem 8 in \cite{mostert}, there exists a continuous section
$\lambda\colon G/H\to G$. Denote by $\pi\colon G\to G/H$ the quotient map, and define $\phi\colon G\to H$ by
$$\phi(g)=g(\lambda(\pi(g))^{-1}$$
for all $g\in G$. We check that the range of $\phi$, which a priori is contained in $G$, really lands in $H$:
$$\pi(\phi(g))=\pi(g)\pi\left(\lambda(\pi(g))^{-1}\right)=\pi(g)\pi(g)^{-1}=1$$
for all $g\in G$, so $\phi(G)\subseteq H$. Continuity of $\phi$ follows from continuity of $\lambda$ and from
continuity of the group operations on $G$. Finally, if $h\in H$ and $g\in G$, then
$$\phi(hg)=hg\lambda(\pi(hg))^{-1}=hg\lambda(\pi(g))^{-1}=h\phi(g),$$
as desired. \\
\indent (2). Both cases follow from the fact that there is a group homomorphism $G\to G_j$ determined by
$(g_i)_{i\in I}\mapsto g_j$.\\
\indent (3). This follows from (1) and the fact that $G/G_0$ is totally disconnected.\end{proof}

Rokhlin dimension can indeed increase when passing to a subgroup, even if the original action has the \Rp.

\begin{eg} \label{eg: Rdim not Rp}
Let $\alpha\colon\T\to\Aut(C(\T))$ be given by $\alpha_\zt(f)(\omega)=f(\zt^{-1}\omega)$ for
$\zt,\omega\in\T$ and $f\in C(\T)$. Then $\alpha$ has \Rdim\ zero. Given $n\in\N$ with $n>1$, identify $\Z_n$ with the
subgroup of $\T$ consisting of the $n$-th roots of unity. Then
$$\cdimRok(\alpha|_{\Z_n})=\dimRok(\alpha|_{\Z_n})=1.$$
Indeed, $\dimRok(\alpha|_{\Z_n})\leq 1$ by Theorem \ref{restrictions}. If $\dimRok(\alpha|_{\Z_n})=0$, then
$\alpha|_{\Z_n}$ would have the \Rp, which in particular would imply the existence of a non-trivial projection
in $C(S^1)$, which is a contradiction.\end{eg}

Even for circle actions with the \Rp, there are less obvious $K$-theoretic obstructions for the restriction
of a circle action with the Rokhlin property to have the Rokhlin property, besides merely the lack of projections,
as the next example shows.

\begin{eg}\label{eg: restrictions} (See Example 4.19 in \cite{gardella classification on Otwo}) There is an example of a purely infinite
simple separable nuclear \uca, and an action of the circle on it with the \Rp, such that no restriction to a
finite subgroup of $\T$ has the \Rp. \\
\indent To construct it, let $\{p_n\}_{n\in\N}$ be an enumeration of the prime numbers, and for every $n\in\N$ set
$q_n=p_1\cdots p_n$. Fix a countable dense subset $X=\{x_1,x_2,x_3,\ldots\}$ of $\T$ with $x_1=1$.
For $n$ in $\N$, define a unital injective map $\iota_n\colon M_{q_n}(C(\T))\to M_{q_{n+1}}(C(\T))$ by
$$\iota_n(f)=\left(
               \begin{array}{cccc}
                 f & 0 & \cdots & 0 \\
                 0 & \text{\texttt{Lt}}_{x_2}(f) & \cdots & 0 \\
                 \vdots & \vdots& \ddots & \vdots \\
                 0 & 0 & \cdots & \text{\texttt{Lt}}_{x_{p_n}}(f) \\
               \end{array}
             \right)$$
for $f$ in $M_{q_n}(C(\T))$. The direct limit $A=\varinjlim (M_{q_n}(C(\T)),\iota_n)$ is a simple unital A$\T$-algebra.
For $n\in\N$, let $\alpha^{(n)}\colon \T\to\Aut(M_{q_n}(C(\T)))$ be the tensor product of the trivial action on
$M_{q_n}$ with the action of left translation on $C(\T)$. Then the sequence $\left(\alpha^{(n)}\right)_{n\in\N}$
induces a direct limit action $\alpha=\varinjlim\alpha^{(n)}$ of $\T$ on $A$.\\
\indent Now set $B=A\otimes\OI$ and define $\beta\colon\T\to\Aut(B)$ by $\beta=\alpha\otimes\id_{\OI}$. Then $B$
is a purely infinite, simple, separable, nuclear \uca\ satisfying the UCT. It is shown in Example 4.19 in
\cite{gardella classification on Otwo} that $\beta$ has the \Rp, and that for every $m\in\N$ with $m>1$, the restriction
$\beta|_m\colon \Z_m\to\Aut(B)$ does not have the \Rp.\\
\end{eg}

We recall a result from \cite{gardella classification on Otwo} that gives a sufficient (and many times also necessary)
condition for the restriction of an action with the \Rp\ to have the \Rp.

\begin{thm} (Theorem 7.18 of \cite{gardella classification on Otwo}) Let $A$ be a separable \uca, let $n\in\N$ and let
$\alpha\colon\T\to\Aut(A)$ be an action with the \Rp. Suppose that $A$ absorbs $M_{n^\I}$. Then $\alpha|_{\Z_n}\colon\Z_n\to
\Aut(A)$ has
the \Rp.\end{thm}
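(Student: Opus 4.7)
The goal is to exhibit a unital equivariant $*$-homomorphism $C(\Z_n)\to A_\I\cap A'$, where $\Z_n$ acts by translation on the domain and via $\alpha_\I$ on the codomain; equivalently, to produce pairwise orthogonal projections $e_0,\ldots,e_{n-1}\in A_\I\cap A'$ summing to $1$ with $\alpha_{\tau_n}(e_j)=e_{j+1}$ (indices $\bmod n$), where $\tau_n=e^{2\pi i/n}$. The strategy is to realize inside $A_\I\cap A'$ the following concrete construction from $C(\T)\otimes M_n$ (with $\Z_n$ acting by translation on $C(\T)$ and trivially on $M_n$): the elements
\[ e_j(z)\;=\;\tfrac{1}{n}\sum_{k,l=0}^{n-1}\tau_n^{j(k-l)}\,z^{k-l}\,E_{kl} \]
are readily checked to be pairwise orthogonal rank-one projections summing to $1$ and cyclically permuted by $\Z_n$.

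First, the Rokhlin property of $\alpha$ (Remark~\ref{rem: Rp and Rdim}) supplies a unital equivariant $*$-homomorphism $\varphi\colon (C(\T),\texttt{Lt})\to(A_{\I,\alpha}\cap A',\alpha_\I)$; set $u=\varphi(z)\in A_\I\cap A'$, so $u$ is a unitary with $\alpha_\zeta(u)=\zeta u$ for every $\zeta\in\T$, and in particular $\alpha_{\tau_n}(u^m)=\tau_n^m u^m$ for every $m\in\Z$. Second, I will produce matrix units $\{e_{kl}\}_{k,l=0}^{n-1}$ for $M_n$ inside $A_\I\cap A'$ satisfying (i) $[e_{kl},u]=0$ and (ii) $\alpha_{\tau_n}(e_{kl})=e_{kl}$. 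Granting both, define
\[ e_j \;=\; \tfrac{1}{n}\sum_{k,l=0}^{n-1}\tau_n^{j(k-l)}\,u^{k-l}\,e_{kl}\qquad (j=0,\ldots,n-1). \]
The matrix unit relations together with character orthogonality $\tfrac{1}{n}\sum_j \tau_n^{j(k-l)}=\delta_{kl}$ give $e_j=e_j^*=e_j^2$, $e_je_{j'}=0$ for $j\neq j'$, and $\sum_j e_j=\sum_k e_{kk}=1$; covariance of $u$ together with (ii) yields $\alpha_{\tau_n}(e_j)=e_{j+1}$; and each $e_j$ lies in $A'$ because both $u$ and the $e_{kl}$ do. Thus the $e_j$ manifest the Rokhlin property of $\alpha|_{\Z_n}$.

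The substance of the argument is producing matrix units satisfying (i) and (ii) simultaneously. For (i) alone, since $\varphi(C(\T))$ is a separable subalgebra of $A_\I\cap A'$ and $M_{n^\I}$ embeds unitally into $A_\I\cap A'$ by $M_{n^\I}$-absorption of $A$, a standard Kirchberg $\ep$-test / reindexing argument produces a unital $*$-homomorphism $M_n\to A_\I\cap A'\cap\varphi(C(\T))'$. To also secure (ii) one invokes an equivariant absorption principle for compact Rokhlin actions: when $\alpha$ has the Rokhlin property and $A$ absorbs a separable strongly self-absorbing $\mathcal{D}$, the algebra $\mathcal{D}$ embeds unitally into the $\alpha_\I$-fixed point subalgebra of $A_\I\cap A'$. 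Heuristically, the Rokhlin homomorphism $\varphi$ furnishes a continuous $\T$-equivariant partition of unity one can use to symmetrize any non-equivariant $\mathcal{D}$-embedding into an equivariant one; applied to $\mathcal{D}=M_{n^\I}$ and combined with the commutant-moving trick above, this delivers the $\alpha_{\tau_n}$-fixed matrix units in $\varphi(C(\T))'$ and completes the proof.
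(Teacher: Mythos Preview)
The present paper does not contain a proof of this theorem; it is merely recalled, with attribution to Theorem~7.18 of \cite{gardella classification on Otwo}, and no argument is given here. There is therefore nothing in this paper to compare your proposal against.

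On its own merits, your outline is sound. The explicit formula for the $e_j$ in terms of a Rokhlin unitary $u$ and a commuting, $\alpha_{\tau_n}$-invariant system of matrix units is correct, and the verification that the $e_j$ are orthogonal projections cyclically permuted by $\alpha_{\tau_n}$ is routine. The only point requiring care is the simultaneous achievement of (i) and (ii). Your appeal to the equivariant absorption result of Hirshberg--Winter \cite{hirshberg winter} (Rokhlin property plus $\mathcal{D}$-absorption implies $(A,\alpha)\cong(A\otimes\mathcal{D},\alpha\otimes\id_{\mathcal{D}})$) is the right move: it yields, for $\mathcal{D}=M_{n^\infty}$, genuinely $\alpha$-invariant and asymptotically central matrix units in $A$ itself, coming from ever deeper tensor factors. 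Because these live in $A$, a diagonal reindexing against any lift of the separable set $\varphi(C(\T))\subseteq A_\infty$ can be carried out termwise without disturbing the $\alpha$-invariance, which secures (i) and (ii) together. One caveat: the ``symmetrization via the Rokhlin partition of unity'' you describe heuristically is not literally how one produces an equivariant embedding (averaging a homomorphism does not give a homomorphism); what actually does the work is the equivariant isomorphism just cited, and you should invoke it as a black box rather than sketch a mechanism that does not quite function as stated.
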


We finish the discussion about restrictions of actions with finite Rokhlin dimension with an example
that shows that there is in general no way to determine the Rokhlin dimension of an action only by
knowing the Rokhlin dimension of all of its restrictions, at least in the case of commuting towers.
Our example is somewhat surprising: we exhibit an example of a circle action on a \ca, such that all of its
restrictions to finite subgroups have the Rokhlin property, but the action itself has infinite Rokhlin
dimension with commuting towers. (We do not know whether this action has finite Rokhlin dimension with
non-commuting towers.) 

\begin{eg}\label{eg: restr Rp action inf Rdim} (See Example 5.12 in \cite{gardella classification on Otwo}.)
Let $A$ be the universal UHF-algebra, this is, $A=\varinjlim(M_{n!},\iota_n)$ where $\iota_n\colon M_{n!}\to M_{(n+1)!}$ is
given by $\iota_n(a)=\diag(a,\ldots,a)$ for all $a$ in $M_{n!}$. For every $n\in\N$, let $\alpha^{(n)}\colon \T\to\Aut(M_{n!})$ be given by
$$\alpha^{(n)}_\zeta=\Ad(\diag(1,\zt,\ldots,\zt^{n!-1}))$$
for all $\zeta\in\T$. Then $\iota_n\circ\alpha^{(n)}_\zeta=\alpha^{(n+1)}_\zeta\circ\iota_n$ for all $n\in\N$ and all $\zeta\in\T$,
and hence there is a direct limit action $\alpha=\varinjlim\alpha^{(n)}$ of $\T$ on $A$. It is shown in
Example 5.12 in \cite{gardella classification on Otwo} that for every positive integer $m>1$, the restriction
$\alpha|_{\Z_m}\colon \Z_m\to \Aut(A)$ has the Rokhlin property, and that the action $\alpha$ itself does
not have the Rokhlin property. It moreover follows from Corollary \ref{cor: no actions with finite Rdim on AF}
that $\alpha$ does not even have finite Rokhlin dimension with commuting towers.\end{eg}

Example \ref{eg: restr Rp action inf Rdim} should be contrasted with the following fact.

\begin{prop} Let a compact Lie group $G$ act on a locally compact Hausdorff space $X$. Then the action is free
if and only if its restriction to every finite cyclic subgroup of $G$ of \emph{prime order} is free.\end{prop}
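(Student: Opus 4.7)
The forward direction is immediate: any subgroup of a group acting freely still acts freely. The content is in the converse.

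For the converse, I would argue by contrapositive. Suppose the $G$-action is not free, so there exist $x \in X$ and $g \in G \setminus \{e\}$ with $g \cdot x = x$. The stabilizer $G_x = \{g \in G : g\cdot x = x\}$ is a closed subgroup of $G$, and by assumption it is nontrivial. Since $G_x$ is closed in a compact Lie group, it is itself a compact Lie group. The goal is then to exhibit a finite cyclic subgroup of prime order sitting inside $G_x$; any such subgroup will have the property that all its nonidentity elements fix $x$, contradicting the hypothesis that the restriction to cyclic subgroups of prime order is free.

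The main (and really only) step is thus: \emph{every nontrivial compact Lie group contains a finite cyclic subgroup of prime order}. I would split this into two cases according to whether the identity component $(G_x)_0$ is trivial. If $(G_x)_0 = \{e\}$, then $G_x$ is zero-dimensional and, being a compact Lie group, is a finite group; since $G_x \neq \{e\}$, Cauchy's theorem produces an element of prime order, yielding the desired cyclic subgroup. If $(G_x)_0 \neq \{e\}$, then it is a connected compact Lie group of positive dimension, and hence contains a maximal torus $T$ of positive dimension. Since $T \cong (S^1)^k$ for some $k \geq 1$, it contains elements of every finite order, in particular an element of any prescribed prime order $p$, which generates a cyclic subgroup of order $p$.

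The main obstacle is conceptual rather than technical, namely knowing that one does not need the stabilizer to literally contain a ``small'' element: the Lie structure guarantees that a nontrivial closed subgroup is either finite (and then Cauchy applies) or has a positive-dimensional identity component (and then a maximal torus provides prime order elements). Both facts are standard structural consequences of being a compact Lie group, so no delicate argument is needed beyond invoking them. Once the cyclic subgroup $H \leq G_x$ of prime order is produced, the contradiction with the hypothesis that $H$ acts freely is immediate, completing the proof.
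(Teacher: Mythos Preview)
Your proof is correct and follows essentially the same route as the paper's: both argue by contrapositive, pass to the nontrivial stabilizer (a closed subgroup, hence a compact Lie group by Cartan's theorem), and then split into the finite case versus the positive-dimensional case to produce a cyclic subgroup of prime order. Your treatment of the finite case via Cauchy's theorem is slightly more explicit than the paper's, but the structure and ideas are the same.
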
 
\begin{proof} The ``only if" implication is immediate. For the ``if" implication, let $g\in G\setminus \{1\}$ and
assume that there exists $x$ in $X$ with $gx=x$. The stabilizer subgroup 
$$S_x=\{h\in G\colon hx=x\}$$
of $x$ is therefore non-trivial. Being a closed subgroup of $G$, it is a Lie group by Cartan's theorem. It follows that $S_x$ has a finite cyclic group of prime order: this is immediate if $S_x$ is finite, while if $S_x$ is infinite, it must
contain a (maximal) torus. Now, the restriction of the action to any such subgroup is trivial, contradicting the 
assumption. It follows that the action of $G$ on $X$ is free.\end{proof} 

\section{Comparison with other notions of non-commutative freeness}

In this section, we compare the notion of finite Rokhlin dimension (with and without commuting towers) with
some of the other forms of freeness of group actions on \ca s that have been studied. The properties we discuss
here include freeness of actions on compact Hausdorff spaces in the commutative case, the Rokhlin property,
discrete $K$-theory, local discrete $K$-theory, total $K$-freenesss, and pointwise outerness. \\
\indent We begin by comparing finite Rokhlin dimension on unital commutative \ca s with freeness of the
induced action on the maximal ideal space. Notice that in the case of commutative \ca s, the distinction
between commuting and non-commuting towers is irrelevant.

\begin{lemma} \label{lem: Rokdim on CX}
Let $G$ be a compact group acting on a compact Hausdorff space $X$. Denote by
$\alpha\colon G\to\Aut(C(X))$ the induced action of $G$ on $X$ and let $n$ be a non-negative integer.
Then $\alpha$ has Rokhlin dimension at most $n$ if and only if there are an open cover $\{U_0,\ldots,U_n\}$ of
$(\beta\N \setminus \N)\times X$ consisting of $G$-invariant open sets, and continuous, proper, equivariant functions
$\phi_j\colon U_j\to G\times (0,1]$, where the action of $G$ on $G\times (0,1]$ is translation on $G$ and
trivial on $(0,1]$.\end{lemma}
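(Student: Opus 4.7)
The plan is to translate the abstract order-zero data in the definition of Rokhlin dimension into concrete geometric data on the space $Y := (\beta\N \setminus \N) \times X$, via two identifications: first the central sequence algebra with $C(Y)$, and then equivariant cpc order-zero maps with proper equivariant maps out of open subsets of $Y$ using Gelfand duality and Theorem \ref{cpc order zero and homs}.

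The first step is the identification
\[
C(X)_{\I,\alpha} \cap C(X)' \;\cong\; C((\beta\N \setminus \N)\times X).
\]
Since $C(X)$ is commutative, the relative commutant is all of $C(X)_{\I,\alpha}$. Using compactness of $X$ (and nuclearity) one has $\ell^{\I}(\N,C(X)) \cong C(\beta\N)\otimes C(X) \cong C(\beta\N \times X)$, and under this identification $c_0(\N,C(X))$ corresponds to the ideal of continuous functions vanishing on the compact subspace $Y = (\beta\N \setminus \N)\times X$. Hence $C(X)_\I \cong C(Y)$. The action of $G$ on $Y$, trivial on the first factor and $\alpha$ on the second, is jointly continuous, so by compactness of $Y$ the induced action on $C(Y)$ is continuous in the sup norm; therefore $C(X)_{\I,\alpha}\cap C(X)' \cong C(Y)$.

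The second step uses Theorem \ref{cpc order zero and homs} and Corollary \ref{cor: cpcoz equivariant} to convert equivariant cpc order-zero maps $\varphi_j\colon C(G)\to C(Y)$ into equivariant $\ast$-homomorphisms $\rho_j\colon C_0((0,1])\otimes C(G) = C_0((0,1]\times G) \to C(Y)$ determined by $\rho_j(\id_{(0,1]}\otimes f)=\varphi_j(f)$, where $G$ acts trivially on $(0,1]$ and by left translation on $G$. By the non-unital Gelfand correspondence, each such $\rho_j$ corresponds to an open subset
\[
U_j := \{y\in Y : \rho_j(\id_{(0,1]}\otimes 1)(y) > 0\} \;\subseteq\; Y,
\]
together with a proper continuous map $\phi_j\colon U_j \to G\times(0,1]$ satisfying $\rho_j(h)(y) = h(\phi_j(y))$ for $y\in U_j$ and $0$ otherwise. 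Equivariance of $\rho_j$ is equivalent to $U_j$ being $G$-invariant and $\phi_j$ being equivariant for the stated actions.

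The third step matches the normalization condition with the covering condition. The condition $\sum_j\varphi_j(1)=1$ immediately implies $\bigcup_j U_j = Y$: any point outside every $U_j$ would give sum zero. Conversely, starting from a $G$-invariant open cover $\{U_0,\ldots,U_n\}$ of $Y$ and proper equivariant maps $\phi_j\colon U_j \to G\times (0,1]$, let $t_j\in C_0(U_j)\subseteq C(Y)$ denote the $(0,1]$-component of $\phi_j$ (extended by $0$), and set $s := \sum_j t_j \in C(Y)$. Then $s$ is $G$-invariant, continuous, and strictly positive on the compact space $Y$, hence bounded below. Replacing the $(0,1]$-component of each $\phi_j$ by $t_j/s$ yields proper equivariant maps $\tilde\phi_j\colon U_j \to G\times (0,1]$ whose associated cpc order-zero maps $\tilde\varphi_j$ satisfy $\sum_j \tilde\varphi_j(1)=1$. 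Combining the three steps gives the asserted equivalence. The main technical point is the first step, identifying the relevant sequence algebra and verifying continuity of the inherited action; the subsequent translation via Gelfand duality and the rescaling argument are routine.
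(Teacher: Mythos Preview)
Your proof is correct and follows essentially the same route as the paper's: identify $C(X)_{\I,\alpha}\cap C(X)'$ with $C((\beta\N\setminus\N)\times X)$, then use the order-zero/homomorphism correspondence together with Gelfand duality to translate the data into invariant open sets and equivariant proper maps. The paper's proof merely sketches this and declares the rest ``straightforward''; you have supplied the details the paper omits, in particular the verification that the induced action on $C(Y)$ is norm-continuous (so that $C(X)_{\I,\alpha}$ is all of $C(Y)$), the properness in the Gelfand dictionary, and the rescaling argument $t_j\mapsto t_j/s$ needed in the converse direction to pass from a mere cover to the exact normalization $\sum_j\varphi_j(1)=1$.
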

\begin{proof} Note that
$$C(X)_{\I,\alpha}\cap C(X)'= C(X)_{\I,\alpha} = C((\beta\N\setminus\N)\times X),$$
and that the induced action on $(\beta\N\setminus\N)\times X$ is trivial on $\beta\N\setminus\N$ and the
$G$-action on $X$. The existence of a completely positive contractive order zero map $\varphi\colon C(G)
\to C((\beta\N\setminus\N)\times X)$ is easily seen to be equivalent to the existence of an open set $U$
in $(\beta\N\setminus\N)\times X$ and a continuous function $\phi\colon U\to G\times (0,1]$. With this in
mind, it is easy to see that $\varphi$ is equivariant if and only if $U$ is $G$-invariant and $\phi$ is
equivariant. The rest of the proof is straightforward, and is omitted. \end{proof}

\begin{thm}\label{free action on spaces} Let $G$ be a compact Lie group and let $X$ be a compact Hausdorff
space. Let $G$ act on $X$ and denote by $\alpha\colon G\to\Aut(C(X))$ the induced action of $G$ on $C(X)$.
\be \item If $\alpha$ has finite Rokhlin dimension, then the action of $G$ on $X$ is free.
\item If the action of $G$ on $X$ is free, then $\alpha$ has finite \Rdim. In fact, there are a non-negative
integer $d$ and equivariant completely positive contractive order zero maps
$$\varphi_0,\ldots,\varphi_d\colon C(G)\to C(X)$$
such that $\sum\limits_{j=0}^d\varphi_j(1)=1$. Moreover, if $\dim(X)<\I$, we have
$$\dimRok(\alpha)\leq \dim(X)-\dim(G).$$
\ee\end{thm}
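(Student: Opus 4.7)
The plan is to handle the two parts separately using the characterization of finite Rokhlin dimension for commutative \ca s given by Lemma \ref{lem: Rokdim on CX}.

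For part (1), if $\alpha$ has Rokhlin dimension at most $n$, Lemma \ref{lem: Rokdim on CX} yields an invariant open cover $\{U_0, \ldots, U_n\}$ of $(\beta\N \setminus \N) \times X$ together with equivariant continuous maps $\phi_j \colon U_j \to G \times (0,1]$. Suppose for contradiction that the action is not free, so $g \cdot x = x$ for some $g \in G \setminus \{1\}$ and some $x \in X$. Since $G$ acts trivially on $\beta\N \setminus \N$, any point $(\omega, x)$ with $\omega \in \beta\N \setminus \N$ is fixed by $g$. Choose $j$ with $(\omega, x) \in U_j$; then equivariance gives $\phi_j(\omega, x) = \phi_j(g \cdot (\omega, x)) = g \cdot \phi_j(\omega, x)$. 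But $G$ acts on $G \times (0,1]$ by left translation on the first coordinate, which is a free action, so $g = 1$, a contradiction.

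For part (2), the key structural input is that a free action of a compact Lie group $G$ on a compact Hausdorff space $X$ turns the orbit map $\pi \colon X \to X/G$ into a principal $G$-bundle (Gleason's theorem). Since $X/G$ is compact, I choose finitely many trivializing open sets $V_0', \ldots, V_m'$ covering $X/G$, with equivariant trivializations $\tau_j \colon \pi^{-1}(V_j') \to G \times V_j'$. Let $\{f_0, \ldots, f_m\}$ be a partition of unity on $X/G$ subordinate to $\{V_j'\}$, and for each $j$ define $\varphi_j \colon C(G) \to C(X)$ by
\[ \varphi_j(\xi)(y) = \begin{cases} f_j(\pi(y))\,\xi(\mathrm{pr}_G(\tau_j(y))), & \text{if } y \in \pi^{-1}(V_j'), \\ 0, & \text{otherwise}. \end{cases} \]
A direct check shows that each $\varphi_j$ is completely positive contractive and has order zero, that it is equivariant (using equivariance of $\tau_j$ together with invariance of $\pi$ under $\alpha$), and that $\sum_{j=0}^m \varphi_j(1) = \sum_j f_j \circ \pi = 1$. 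This establishes the existence of some $d$.

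To obtain the sharper bound when $\dim(X) < \infty$, I invoke the classical fact that $\dim(X/G) = \dim(X) - \dim(G)$ for free actions of compact Lie groups on finite-dimensional compact Hausdorff spaces. Then, exactly as in the proof of Theorem \ref{restrictions}, I apply Proposition 1.5 of \cite{kirchberg winter} to refine the trivializing cover of $X/G$ into a $d$-decomposable cover, with $d = \dim(X/G)$. The refinement decomposes into families $\mathcal{V}_0, \ldots, \mathcal{V}_d$ of pairwise disjoint open sets; repeating the single-trivialization construction above within each family and summing over the family (which is legitimate because disjoint supports keep the sum order zero and equivariant) produces maps $\varphi_0, \ldots, \varphi_d$ satisfying $\dimRok(\alpha) \leq d \leq \dim(X) - \dim(G)$. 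The main obstacle is the dimension-theoretic step: verifying the identity $\dim(X/G) = \dim(X) - \dim(G)$ (a standard but nontrivial result in the dimension theory of transformation groups), and carefully checking that the combination across each $\mathcal{V}_k$ preserves the order zero property; disjointness from $d$-decomposability is exactly what makes this work.
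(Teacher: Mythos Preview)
Your proposal is correct and follows essentially the same approach as the paper: part (1) is argued identically via Lemma~\ref{lem: Rokdim on CX} and the freeness of left translation on $G$, and part (2) uses the principal $G$-bundle structure of $X\to X/G$ (Gleason's theorem, which the paper cites as Theorem~1.1 in \cite{phillips survey}) together with a trivializing cover, a subordinate partition of unity, and---for the dimension bound---the identity $\dim(X/G)=\dim(X)-\dim(G)$ and the Kirchberg--Winter refinement, exactly as in the proof of Theorem~\ref{restrictions}. The only difference is cosmetic: you spell out the order-zero maps explicitly, whereas the paper simply points back to the argument of Theorem~\ref{restrictions}.
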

We point out that the conclusion in part (2) above really is stronger than $\alpha$ having finite Rokhlin dimension, since
one can choose the maps to land in $C(X)$ rather than in its (central) sequence algebra
$C(X)_{\I,\alpha}=C(X)_{\I,\alpha}\cap C(X)'$.
\begin{proof} Part (1). Assume that there exist $g\in G$ and $x\in X$ with $g\cdot x=x$.
Choose an open cover $U_0,\ldots,U_n$ of $(\beta\N \setminus \N)\times X$ consisting of $G$-invariant
open sets, and continuous equivariant functions $\phi_j\colon U_j\to G\times (0,1]$ as in Lemma
\ref{lem: Rokdim on CX}. Fix $\omega\in\beta\N\setminus\N$ and choose $j\in \{0,\ldots,n\}$ such that
$(\omega,x)\in U_j$. Write $\phi_j\colon U_j\to G\times (0,1]$ as $\phi=\left(\phi^{(1)},\phi^{(2)}\right)$.
Note that $\phi^{(1)}\colon U_j\to G$ is equivariant, where the action of $G$ on itself is given by left
translation (and in particular, it is free). We have
\begin{align*}
\left(\phi^{(1)}_j(\omega,x),\phi^{(2)}_j(\omega,x)\right)=\phi_j(\omega,x)=\phi_j(\omega,g\cdot x)=\left(g\phi^{(1)}_j(\omega,x),\phi^{(2)}_j(\omega,x)\right),
\end{align*}
which implies that $\phi^{(1)}_j(\omega,x)=g\phi^{(1)}_j(\omega,x)$ and hence $g=1$. The action of $G$ on $X$ is
therefore free.\\
\indent Part (2). The proof is almost identical to that of Theorem \ref{restrictions}, using Theorem 1.1 in \cite{phillips survey}
in place of part (1) of Theorem 2 in \cite{karube}. (Since $G$ is a Lie group, we do not need $X$ to be finite dimensional for the
quotient map $X\to X/G$ to be a principal $G$-bundle.) When $X$ is not necessarily finite-dimensional, we simply take $d$ to be
the carinality of some open cover $\mathcal{U}$ consisting of open subsets of $X/G$ over which the fiber bundle $X\to X/G$ is trivial.
When $\dim(X)<\I$, we have $\dim(X/G)=\dim(X)-\dim(G)$, so we can again use Proposition 1.5 in \cite{kirchberg winter} to refine
$\mathcal{U}$ to a $(\dim(X)-\dim(G))$-decomposable open cover of $X/G$, and proceed as in the proof of Theorem \ref{restrictions}.
We omit the details. \end{proof}

\begin{rem} It follows from the dimension estimate in part (2) of the above theorem that whenever a compact
Lie group $G$ acts freely on a compact Hausdorff space $X$ of the same dimension as $G$, then the induced action
of $G$ on $C(X)$ has the Rokhlin property. In fact, in this case it follows that $X$ is equivariantly homeomorphic
to $G\times (X/G)$, where the $G$-action on $G$ is left translation and the action on $X/G$ is trivial. Indeed,
$\dim(X/G)=\dim(X)-\dim(G)$, so $X/G$ is zero-dimensional. If $\pi\colon X\to X/G$ denotes the canonical quotient map,
then by Theorem 8 in \cite{mostert}, there exists a continuous map $\lambda\colon X/G \to X$ such that
$\pi\circ\lambda=\id_{X/G}$. One easily checks that the map $X\to G\times (X/G)$ given by
$x\mapsto (\lambda(\pi(x)),\pi(x))$ for $x\in X$, is a homeomorphism. It is also readily verified that it is
equivariant, thus proving the claim.
\end{rem}

Theorem \ref{strongly free and finite Rdim} below leads to a useful criterion to determine when a given action of a compact group has
finite Rokhlin dimension with commuting towers, although it is less useful if one is interested in the actual value of
the Rokhlin dimension. For most applications, however, having the exact value is not as important as knowing
that it is finite. In particular, it will follow from said theorem that for a compact Lie group,
the $X$-Rokhlin property for a finite dimensional compact Hausdorff space $X$ (as defined in Definition 1.5
in \cite{HP}), is equivalent to finite Rokhlin dimension with commuting towers in our sense.\\
\indent We first need a lemma about universal \ca s generated by the images of completely positive contractive
order zero maps. We present the non-commuting tower version, as well as its commutative counterpart, for use in a later
result. In the case of a finite group action with finite Rokhlin dimension with commuting towers,
the result below was first obtained by Ilan Hirshberg, and its proof is contained in the proof of Lemma 1.6 in \cite{HP}.

\begin{lemma} \label{lma: univ calg order zero maps}
Let $G$ be a compact group and let $d$ be a non-negative integer.
\be\item There exist a unital \ca\ $C$ and an action $\gamma\colon G\to\Aut(C)$ of $G$ on $C$ with the
following universal property. Let $B$ be a \uca, let $\beta\colon G\to\Aut(B)$ be an action of $G$ on $B$, and
let $\varphi_0,\ldots,\varphi_d\colon A \to B$ be equivariant completely positive contractive order zero maps
such that $\varphi_0(1)+\cdots+\varphi_d(1)=1$. Then there exists a unital equivariant
homomorphism $\varphi\colon C\to B$.
\item There exists a compact metrizable free $G$-space $X$ with the following universal property. Let $B$ be a \uca, let
$\beta\colon G\to\Aut(B)$ be an action of $G$ on $B$, and let $\varphi_0,\ldots,\varphi_d\colon A \to B$ be
equivariant completely positive contractive order zero maps with commuting ranges
such that $\varphi_0(1)+\cdots+\varphi_d(1)=1$. Then there exists a unital equivariant
homomorphism $\varphi\colon C(X) \to B$.\\
\indent Moreover, the space $X$ in part (2) satisfies
$$\dim(X)\leq (d+1)(\dim(G)+1)-1.$$\ee\end{lemma}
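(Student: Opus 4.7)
The plan is to construct $C$ and $C(X)$ directly as universal \cas\ built from the presentation implicit in the statement, using the correspondence between completely positive contractive order zero maps and \hms\ given by Theorem \ref{cpc order zero and homs}. By that theorem together with Corollary \ref{cor: cpcoz equivariant}, equivariant completely positive contractive order zero maps $\varphi_j\colon C(G)\to B$ are in bijective correspondence with equivariant \hms\ $\rho_j\colon C_0((0,1])\otimes C(G)\to B$, where $C_0((0,1])$ carries the trivial $G$-action. Under this correspondence, $\varphi_j(1_{C(G)})=\rho_j(h_j)$, where $h_j=\id_{(0,1]}\otimes 1$, so the normalization $\varphi_0(1)+\cdots+\varphi_d(1)=1$ becomes the single relation $\sum_{j=0}^d \rho_j(h_j)=1$.

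For part (1), I would take $C$ to be the universal \uca\ equipped with \hms\ $\rho_j^{(C)}\colon C_0((0,1])\otimes C(G)\to C$ for $j=0,\ldots,d$ satisfying $\sum_{j=0}^d \rho_j^{(C)}(h_j)=1$. Concretely, $C$ is the quotient of the unital free product of $d+1$ copies of $(C_0((0,1])\otimes C(G))^{\sim}$ (amalgamated over the scalars) by the closed two-sided ideal generated by this relation; this universal \ca\ exists since each $h_j$ is a contraction. The action $\gamma\colon G\to\Aut(C)$ is induced by $\id_{C_0((0,1])}\otimes\texttt{Lt}$ on each factor via functoriality of the free product and unitization, and descends to the quotient because every $h_j$ is $G$-invariant. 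The universal property of $C$ then follows from Theorem \ref{cpc order zero and homs}, Corollary \ref{cor: cpcoz equivariant}, and the universal properties of the unitization and of the free product.

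For part (2), the commuting-range hypothesis lets each $\rho_j$ extend to a unital \hm\ on $(C_0((0,1])\otimes C(G))^{\sim}$, and these commuting unital extensions combine into a single equivariant unital \hm
\[
\tilde{\rho}\colon \bigotimes_{j=0}^d (C_0((0,1])\otimes C(G))^{\sim} \longrightarrow B.
\]
Using the identification $(C_0((0,1])\otimes C(G))^{\sim} = C(CG)$, where $CG:=([0,1]\times G)/(\{0\}\times G)$ is the cone on $G$, the domain is $C(Z)$ with $Z = CG\times\cdots\times CG$ ($d+1$ factors). The generator $h_j$ corresponds to the continuous function $[t,g]\mapsto t$ on the $j$-th factor of $Z$, so the relation $\sum_j h_j=1$ cuts out the closed $G$-invariant subspace
\[
X = \bigl\{\bigl([t_0,g_0],\ldots,[t_d,g_d]\bigr)\in Z : t_0+\cdots+t_d = 1\bigr\}.
\]
This is precisely the $(d+1)$-fold topological join $G\ast G\ast\cdots\ast G$, equipped with the diagonal $G$-action. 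Since $\sum t_j=1$ forces $t_j>0$ for at least one index, and $G$ acts freely on itself by translation, the diagonal action on $X$ is free; metrizability of $X$ follows from metrizability of $G$.

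The last step is the dimension bound, obtained by inductively applying the standard join inequality $\dim(Y_0\ast Y_1)\leq \dim(Y_0)+\dim(Y_1)+1$:
\[
\dim\bigl(G^{\ast(d+1)}\bigr)\leq (d+1)\dim(G)+d = (d+1)(\dim(G)+1)-1.
\]
I expect the main technical step to be the identification in part (2) of the universal commutative \uca\ with $C(X)$ for the iterated join: one must verify carefully that the tensor product of the unitizations yields exactly $C(Z)$ (the product of cones rather than, say, the unitization of the tensor product, which would give the smaller one-point compactification), and that the ideal generated by $\sum_j h_j-1$ carves out precisely the join subspace. Once this identification is in place, freeness of the $G$-action and the dimension estimate are routine consequences of the explicit combinatorial description of $X$.
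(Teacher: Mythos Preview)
Your proposal is correct and follows essentially the same approach as the paper: build the universal $C^*$-algebra as a quotient of the free (respectively, tensor) product of $d+1$ copies of the cone $C_0((0,1])\otimes C(G)$ by the ideal forcing $\sum_j h_j=1$, with the $G$-action induced from left translation on each factor. Your treatment of part~(2) is in fact slightly more careful than the paper's: by passing to unitizations you work inside $C((CG)^{d+1})$ and identify $X$ explicitly as the iterated join $G^{\ast(d+1)}$, which makes compactness, freeness, and the sharp dimension bound $(d+1)(\dim G+1)-1$ immediate via the join inequality, whereas the paper's sketch takes the tensor product of the non-unital cones (where the elements $h_j$ do not literally live) and reads off the dimension bound from the ambient product.
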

\begin{proof} Part (1). Set
$$D=\bigast _{j=0}^d C_0((0,1]\times G),$$
and let $\delta\colon G\to \Aut(D)$ be the action obtained by letting $G$ act on each of the free factors diagonally,
with trivial action on $(0,1]$ and translation on $G$. Denote by $I$ the ideal in $D$ generated by
$$\left\{\left(\sum\limits_{j=0}^d \id_{(0,1]}\ast 1_{C(G)}\right)c-c\colon c\in D\right\}.$$
Then $I$ is $\delta$-invariant, and hence there is an induced action $\gamma$ of $G$ on the unital quotient $C=D/I$. It is
clear that the \ca\ $C$ and the action $\gamma$ are as desired.\\
\indent Part (2). Set
$$D=\bigotimes\limits_{j=0}^d C_0((0,1]\times G),$$
and let $\delta\colon G\to \Aut(D)$ be the action obtained by letting $G$ act on each of the tensor factors diagonally.
Then $D$ is a commutative \ca\ and the action on its maximal ideal space induced by $\delta$ is free.
Denote by $I$ the ideal in $D$ generated by
$$\left\{\left(\sum\limits_{j=0}^d \id_{(0,1]}\otimes1_{C(G)} \right)c-c\colon c\in D\right\}.$$
Then $I$ is $\delta$-invariant, and hence there is an induced action $\gamma$ of $G$ on the unital quotient $C=D/I$. Set
$X=\text{Max}(C)$, which is a compact metrizable space. The action on $X$ induced by $\gamma$
is free, being the restriction of a free action to an invariant closed subset. It is clear that $X$ is the desired free $G$-space. \\
\indent The dimension estimate for $X$ follows from the fact that it is a closed subset of
$\bigotimes\limits_{j=0}^d (0,1]\times G$. \end{proof}

\begin{thm}\label{strongly free and finite Rdim} Let $G$ be a compact Lie group, let $A$ be a unital \ca, and let
$\alpha\colon G\to\Aut(A)$ be a continuous action. Then $\alpha$ has finite \Rdim\ with commuting towers \ifo
there exist a finite dimensional compact free $G$-space $X$ and an equivariant unital embedding
$$\varphi\colon C(X)\to A_{\I,\alpha}\cap A'.$$
Moreover, we have the following relations between the dimension of $X$ and the Rokhlin dimension of $\alpha$:
\begin{align*} \dim(X)&\leq (\cdimRok(\alpha)+1)(\dim(G)+1)-1 \\
\cdimRok(\alpha)&\leq \dim(X)-\dim(G)
\end{align*}
\end{thm}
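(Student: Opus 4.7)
The plan is to derive both directions by combining Lemma \ref{lma: univ calg order zero maps}(2), which packages commuting-tower order zero maps into an equivariant homomorphism from a commutative \ca, with Theorem \ref{free action on spaces}(2), which goes in the opposite direction for free actions on finite-dimensional spaces.

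For the ``only if'' direction, I would assume $\cdimRok(\alpha)=d<\I$ and pick equivariant completely positive contractive order zero maps $\varphi_0,\ldots,\varphi_d\colon C(G)\to A_{\I,\alpha}\cap A'$ with commuting ranges summing to the unit. Lemma \ref{lma: univ calg order zero maps}(2) then supplies a compact metrizable free $G$-space $X_0$, of dimension at most $(d+1)(\dim(G)+1)-1$, and an equivariant unital \hm\ $\psi\colon C(X_0)\to A_{\I,\alpha}\cap A'$. To upgrade $\psi$ to an embedding, note that $\ker(\psi)$ is a $G$-invariant closed ideal of $C(X_0)$, hence of the form $C_0(U)$ for some open $G$-invariant $U\subseteq X_0$. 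Setting $X=X_0\SM U$, the action of $G$ restricts to a free action on the closed invariant subspace $X$, and $\psi$ factors through an equivariant unital embedding $C(X)\to A_{\I,\alpha}\cap A'$. Since $\dim(X)\leq \dim(X_0)$, the dimension estimate $\dim(X)\leq (\cdimRok(\alpha)+1)(\dim(G)+1)-1$ is preserved.

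For the ``if'' direction, given a finite-dimensional compact free $G$-space $X$ and an equivariant unital embedding $\varphi\colon C(X)\to A_{\I,\alpha}\cap A'$, apply Theorem \ref{free action on spaces}(2) to produce equivariant completely positive contractive order zero maps $\psi_0,\ldots,\psi_d\colon C(G)\to C(X)$ with $d\leq \dim(X)-\dim(G)$ and $\sum_{j=0}^d\psi_j(1)=1$. The compositions $\varphi\circ\psi_0,\ldots,\varphi\circ\psi_d\colon C(G)\to A_{\I,\alpha}\cap A'$ are then equivariant completely positive contractive order zero maps (composition of an order zero map with a \hm\ is order zero), whose ranges all lie inside the commutative subalgebra $\varphi(C(X))\subseteq A_{\I,\alpha}\cap A'$ and therefore commute, and whose unit evaluations sum to $\varphi(1)=1$. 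This witnesses $\cdimRok(\alpha)\leq d\leq \dim(X)-\dim(G)$.

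The argument is largely a packaging result: the hard work is done in Lemma \ref{lma: univ calg order zero maps}(2) (which requires the commuting-tower hypothesis to yield a commutative universal algebra whose spectrum is a free $G$-space) and in Theorem \ref{free action on spaces}(2) (whose proof relies on principal bundle triviality over good covers and the decomposability lemma of Kirchberg--Winter). The only technical subtlety I foresee is the passage from homomorphism to embedding in the forward direction, which I handle by restricting to the invariant closed subset where the homomorphism is faithful, invoking the fact that freeness is inherited by closed invariant subspaces and that dimension does not increase when passing to closed subspaces.
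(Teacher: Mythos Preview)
Your proposal is correct and follows essentially the same route as the paper's own proof: both directions use Lemma \ref{lma: univ calg order zero maps}(2) and Theorem \ref{free action on spaces}(2) in the same way, with the same passage from the universal homomorphism to an embedding by cutting down to the closed invariant subset on which it is faithful. Your explicit remark that the composed maps have commuting ranges because their images lie in the commutative subalgebra $\varphi(C(X))$ is a small elaboration on a point the paper leaves implicit.
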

\begin{proof} We begin by showing the ``only if'' implication.
Let $d=\cdimRok(\alpha)$, and denote by $Y$ the compact metrizable free $G$-space obtained as in the conclusion of
part (2) in Lemma \ref{lma: univ calg order zero maps}. By universality of $Y$, there is a unital equivariant
homomorphism
$$C(Y)\to A_{\I,\alpha}\cap A'.$$
The kernel of this homomorphism is a $G$-invariant ideal of $C(Y)$ which has the form $C_0(U)$ for some $G$-invariant
open subset $U$ of $Y$. Set $X=Y\setminus U$ and denote by $\varphi\colon C(X)\to A_{\I,\alpha}\cap A'$ the induced
homomorphism. Then the $G$-action on $X$ is free and $\varphi$ is unital, equivariant and injective. Finally, we have
$$\dim(X)\leq \dim(Y) \leq (d+1)(\dim(G)+1)-1,$$
and since $G$ is a compact Lie group, it follows that $X$ is finite dimensional.\\
\indent We now show the ``if'' implication. Set $d=\dim(X)-\dim(G)+1$ and choose completely positive contractive
order zero maps
$$\varphi_0,\ldots,\varphi_d\colon C(G)\to C(X)$$
as in the conclusion of part (2) of Theorem \ref{free action on spaces}. It is immediate to show that the
completely positive contractive order zero maps
$$\varphi\circ \varphi_0,\ldots,\varphi\circ \varphi_d\colon C(G)\to A_{\I,\alpha}\cap A'$$
satisfy the conditions in the definition of finite Rokhlin dimension with commuting towers for $\alpha$. This
finishes the proof.
\end{proof}

In particular, it follows from Theorem \ref{strongly free and finite Rdim} that for a compact Lie group,
the $X$-Rokhlin property for a compact Hausdorff space $X$ (as defined in Definition 1.5
of \cite{HP}), is equivalent to finite Rokhlin dimension with commuting towers.

\begin{cor}\label{cor: discrete K-thy}
Let $G$ be a compact Lie group, let $A$ be a \uca, and let $\alpha\colon G\to\Aut(A)$ be an action
with $\cdimRok(\alpha)<\I$. Then $\alpha$ has discrete $K$-theory, this is, there is $n$ in
$\N$ such that $I_G^n\cdot K_\ast^G(A)=0$. \end{cor}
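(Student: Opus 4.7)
The plan is to reduce the statement to the classical fact that a compact Lie group acting freely on a finite-dimensional compact space has equivariant $K$-theory annihilated by a power of the augmentation ideal, using Theorem \ref{strongly free and finite Rdim} as the bridge.

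First, by Theorem \ref{strongly free and finite Rdim}, the hypothesis $\cdimRok(\alpha)<\I$ provides a finite-dimensional compact free $G$-space $X$ and an equivariant unital embedding $\varphi\colon C(X)\to A_{\I,\alpha}\cap A'$. Since $G$ is a compact Lie group acting freely on the finite-dimensional compact space $X$, a classical theorem of Atiyah--Segal (see \cite{phillips equivariant k-theory book}) identifies $K_\ast^G(C(X))\cong K_\ast(C(X/G))$ with $R(G)$ acting through the augmentation $R(G)\to\Z$. In particular, there exists $n\in\N$, and in fact $n=1$ already suffices, such that $I_G^n\cdot K_\ast^G(C(X))=0$; consequently $I_G^n\cdot[1_{C(X)}]=0$ in $K_0^G(C(X))$.

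Next, I would construct an $R(G)$-bilinear pairing
\[ \mu\colon K_0^G(C(X))\otimes K_\ast^G(A)\longrightarrow K_\ast^G(A) \]
satisfying $\mu([1_{C(X)}]\otimes\eta)=\eta$ for every $\eta\in K_\ast^G(A)$. The structural input is that $\varphi(C(X))$ commutes with $A$ inside $A_{\I,\alpha}$, so there is an equivariant unital $\ast$-homomorphism $\psi\colon C(X)\otimes A\to A_{\I,\alpha}$ given by $f\otimes a\mapsto\varphi(f)a$. To obtain the pairing into $K_\ast^G(A)$ rather than merely into $K_\ast^G(A_{\I,\alpha})$, I would appeal to Lemma \ref{lem: characterization of finite Rdim}: for any class $\eta\in K_\ast^G(A)$ represented by a $G$-invariant projection $q\in\B(W)\otimes A$, I pick an approximately equivariant, approximately multiplicative map $C(X)\to A$ which commutes with $q$ to sufficient accuracy, and then use the stability of equivariant $K$-theory under small perturbations of projections to define $\mu([p]\otimes\eta)$ as a class in $K_\ast^G(A)$. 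Verifying well-definedness, $R(G)$-bilinearity, and the normalization $\mu([1_{C(X)}]\otimes\eta)=\eta$ is routine given the standard $\ep$-test machinery.

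With $\mu$ in hand, the conclusion is immediate: for every $\eta\in K_\ast^G(A)$ and every $r\in I_G^n$,
\[ r\cdot\eta = \mu([1_{C(X)}]\otimes(r\cdot\eta)) = \mu\bigl((r\cdot[1_{C(X)}])\otimes\eta\bigr) = \mu(0\otimes\eta) = 0, \]
so $I_G^n\cdot K_\ast^G(A)=0$. The main obstacle is the construction of $\mu$ directly into $K_\ast^G(A)$ rather than into $K_\ast^G(A_{\I,\alpha})$: the naive attempt using $\psi_\ast$ only yields vanishing of the image of $I_G^n\cdot K_\ast^G(A)$ under the map induced by the inclusion $A\hookrightarrow A_{\I,\alpha}$, which is insufficient because this map is not injective in general. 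Overcoming this requires working with approximate homomorphisms inside $A$ itself, tailored to each class $\eta$, rather than a single fixed homomorphism into the sequence algebra.
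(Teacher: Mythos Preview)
The paper's proof is a two-line citation: Theorem~\ref{strongly free and finite Rdim} gives the $X$-Rokhlin property, and Corollary~4.3 in \cite{HP} then yields discrete $K$-theory. You are essentially attempting to reprove that cited result from scratch, which is a legitimate enterprise, but two points deserve correction.

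First, your claim that $R(G)$ acts on $K_\ast^G(C(X))\cong K_\ast(C(X/G))$ through the augmentation, and hence that $n=1$ suffices, is incorrect. Under the Morita equivalence $C(X)\rtimes G\sim C(X/G)$, the class $[V]\in R(G)$ acts on $K_\ast(C(X/G))$ by multiplication with the class of the associated bundle $X\times_G V$ in $K^0(X/G)$, not by $\dim V$. For instance, the Hopf action of $\T$ on $S^3$ has $X/G=S^2$, and the standard representation maps to the tautological line bundle, which is not trivial in $K^0(S^2)$; here $I_\T$ does not annihilate $K_\ast^\T(S^3)$ but $I_\T^2$ does. The correct argument is that the image of $I_G$ in $K^0(X/G)$ consists of virtual bundles of rank zero, which are nilpotent because $X/G$ is finite-dimensional; combined with the fact that $I_G$ is finitely generated (as $R(G)$ is Noetherian for compact Lie $G$), this gives a uniform $n$ with $I_G^n\cdot K_\ast^G(C(X))=0$.

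Second, the construction of the $R(G)$-balanced pairing $\mu$ landing in $K_\ast^G(A)$ is the genuine content of the result you are citing, and calling the verification ``routine'' undersells it. You correctly identify the obstacle: the homomorphism $C(X)\otimes A\to A_{\I,\alpha}$ only gives vanishing after pushing forward to $K_\ast^G(A_{\I,\alpha})$, and the inclusion $A\hookrightarrow A_{\I,\alpha}$ need not induce an injection on equivariant $K$-theory. Your proposed fix via approximately equivariant, approximately multiplicative maps into $A$ is the right idea, but making $\mu$ well-defined (independent of the chosen approximation), additive in the first variable, and $R(G)$-balanced requires care---in particular one must track how tensoring with a $G$-representation interacts with the approximate multiplication on both tensor factors simultaneously. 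This is exactly the work carried out in \cite{HP}, and it is not a one-line verification.
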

\begin{proof} Corollary 4.3 in \cite{HP} asserts that compact Lie group actions with the $X$-Rokhlin
property have discrete $K$-theory. The result follows from the fact that finite Rokhlin dimension with
commuting towers implies the $X$-Rokhlin property by Theorem \ref{strongly free and finite Rdim}.
\end{proof}

It is proved in \cite{gardella rokhlin property} that a compact Lie group $G$ acts on a unital \ca\ $A$ with the \Rp,
then already $I_G$ annihilates $K_\ast^G(A)$. It may therefore be tempting to conjecture that in the context of
Corollary \ref{cor: discrete K-thy} above, one has
$$\cdimRok(\alpha)+1=\min\left\{n\colon I_G^n\cdot K_\ast^G(A)=0\right\},$$
or at least that the right-hand side determines $\cdimRok(\alpha)$. This is unfortunately not in general the
case, even for finite group actions on Kirchberg algebras that satisfy the UCT, as the following example shows.

\begin{eg}\label{eg: min does not determin dim}
Let $B$ and $\beta\colon\T\to\Aut(B)$ be the \ca\ and the circle action from Example \ref{eg: restrictions}.
As mentioned there, $\beta$ has the \Rp\ and for all $m$ in $\N$, its restriction $\beta|_m$ to $\Z_m\subseteq \T$ does not have
the \Rp. Fix $m$ in $\N$. It follows from Theorem \ref{restrictions} that $\cdimRok(\beta|_m)=1$. Moreover, by
Lemma 4.15 in \cite{gardella classification on Otwo} and part (1) of Proposition 4.22 in \cite{gardella classification on Otwo},
the dual action $\widehat{\beta|_n}\colon \Z_m\to \Aut(B\rtimes\Z_m)$ is approximately inner. In particular $1-K_\ast(\widehat{\beta|_n}_1)=0$
and thus $I_{\Z_m}\cdot K_\ast^{\Z_m}(B)=0$. If $\min\left\{n\colon I_{\Z_m}^n\cdot K_\ast^{\Z_m}(B)=0\right\}$ determined the Rokhlin
dimension of $\beta|_m$, we should have $\cdimRok(\beta|_m)=0$, and this would be a contradiction.
\end{eg}

The phenomenon exhibited above can also be encountered among free actions on spaces, as the action of $\Z_2$ on $S^1$ by
rotation shows. Finally, we mention that it is nevertheless conceivable that one has
$$\min\left\{n\colon I_G^n\cdot K_\ast^G(A)=0\right\}\leq \cdimRok(\alpha)+1,$$
but we have not explored this direction any further.\\
\ \\
\indent Having discrete $K$-theory is special to actions with finite Rokhlin dimension \emph{with commuting towers},
as the following example, which was constructed by Izumi in a different context, shows. \\
\ \\
\indent Let $p$ be a projection in $\OI$ whose class in $K_0(\OI)\cong \Z$ is 0. We recall that the
\emph{standard Cuntz algebra} $\mathcal{O}_\I^{st}$ is defined to be the corner $p\OI p$. It follows from
Kirchberg-Phillips classification of Kirchberg algebras in the UCT class (see \cite{kirchberg_classif} and
\cite{phillips_classif}), that $\mathcal{O}_\I^{st}$ is
the unique unital Kirchberg algebra satisfying the UCT with $K$-theory given by
$$(K_0(\mathcal{O}_\I^{st}),\left[1_{\mathcal{O}_\I^{st}}\right],K_1(\mathcal{O}_\I^{st}))\cong (\Z,0,\{0\}).$$
\indent Since the unit of $\mathcal{O}_\I^{st}$ is trivial in $K_0$, there is a unital homomorphism $\Ot\to \mathcal{O}_\I^{st}$. Hence there is an approximately central embedding of $\Ot$ into $\bigotimes\limits_{n\in\N}
\mathcal{O}_\I^{st}$, so it follows from Theorem 3.8 in \cite{KP} that $\bigotimes\limits_{n\in\N}\mathcal{O}_\I^{st}
\cong \Ot$.

\begin{eg}\label{eg: commuting matters}
(See the example on page 262 of \cite{izumi finite group actions I}.) Let $p$ be a projection in
$\mathcal{O}_\I$ whose class in $K_0(\OI)$ is 0, and set $u=2p-1$. Then $u$ is a unitary of $\OI$
which leaves the corner $p\OI p\cong \mathcal{O}_\I^{st}$ invariant. Since
$\bigotimes\limits_{n\in\N} \mathcal{O}_\I^{st}$ is isomorphic to $\Ot$, if we let $\alpha$ be the infinite tensor product
automorphism $\alpha=\bigotimes\limits_{n\in\N}\Ad(u)$ of $\Ot$, then $\alpha$ determines a $\Z_2$ action on $\Ot$.\\
\indent It is shown in Remark 2.5 in \cite{BEMSW} that $\alpha$ has Rokhlin dimension 1 with non-commuting
towers. We claim that $\text{dim}_{\text{Rok}}^c(\alpha)=\I$, this is, that $\alpha$ has infinite Rokhlin dimension
with commuting towers. We show that $\alpha$ does not have discrete $K$-theory, and the result will then
follow from Corollary \ref{cor: discrete K-thy}.\\
\indent It is shown in \cite{izumi finite group actions I} that $\Ot\rtimes_\alpha\Z_2$ is isomorphic to a
direct limit of $B_n=\mathcal{O}_\I^{st}\oplus \mathcal{O}_\I^{st}$ with connecting maps that on $K_0$ are
stationary and given by the matrix
$$\left(
               \begin{array}{cc}
                 1 & -1  \\
                 -1 & 1\\
               \end{array}
             \right).$$
Moreover, the dual action $\widehat{\alpha}\colon \widehat{\Z_2}\cong \Z_2\to\Aut(\Ot\rtimes_\alpha\Z_2)$ is
the direct limit of the actions $\gamma_n\colon \Z_2\to \Aut(B_n)$ given by $\gamma_n(a,b)=(b,a)$ for all
$(a,b)\in B_n=\mathcal{O}_\I^{st}\oplus \mathcal{O}_\I^{st}$. It follows that $\widehat{\alpha}$ is multiplication
by $-1$ on $K_0(\Ot\rtimes_\alpha\Z_2)$ (it is given by exchanging the columns in the above matrix). It is
shown in Lemma 4.7 in \cite{izumi finite group actions I} that
$K_0(\Ot\rtimes_\alpha\Z_2)\cong \Z\left[\frac{1}{2}\right]$. Given $n$ in $\N$, we have
$$I_{\Z_2}^n\cdot K_0^{\Z_2}(\Ot)\cong \text{Im}\left(\id_{K_0(\Ot\rtimes_\alpha\Z_2)}-K_0(\widehat{\alpha})\right)^n.$$
Now, $\id_{K_0(\Ot\rtimes_\alpha\Z_2)}-K_0(\widehat{\alpha})$ is multiplication by 2 on $\Z\left[\frac{1}{2}\right]$,
so
$$\left(\id_{K_0(\Ot\rtimes_\alpha\Z_2)}-K_0(\widehat{\alpha})\right)^n$$
is an isomorphism for all $n$ in $\N$, and in particular $\alpha$ does not have discrete $K$-theory.
This proves the claim.
\end{eg}

It follows from the example above that the notions of Rokhlin dimension with and without commuting towers do not in general agree.
Even more, having \emph{finite} Rokhlin dimension without commuting towers is really weaker than having \emph{finite}
Rokhlin dimension with commuting towers. Such phenomenon can happen even if the $K$-theoretic obstructions found in
\cite{HP} vanish. This answers a question that was implicitly left open in \cite{HWZ}, at least
in the finite (and compact) group case. We do not know whether there are similar examples for automorphisms.\\
\ \\
\indent Finite Rokhlin dimension with commuting towers is not in general equivalent to having discrete $K$-theory, since
the trivial action on $\Ot$ clearly does not have finite Rokhlin dimension but has discrete $K$-theory for trivial reasons.
The following example, which was originally constructed by Phillips with a different purpose, shows that absence of $K$-theory is not
the only thing that can go wrong.

\begin{eg}\label{eg: discr K-thy not Rdim} We recall the construction in Example 9.3.9 in \cite{phillips equivariant k-theory book}
of an AF-action of $\Z_4$ on the CAR algebra $M_{2^\I}$ whose restriction to $\Z_2$ is not $K$-free, and show that it has other
interesting properties.\\
\indent For $n$ in $\N$, let $A_n=M_{2^n}(\C\oplus \C)$ and set $u_n=\bigotimes\limits_{k=1}^n \diag(1,-1)$, which is a unitary in $M_{2^n}$ (not
in $A_n$). Define connecting maps $\iota_n\colon A_n\to A_{n+1}$ by
$$\iota_n(a,b)=\left( \left(
               \begin{array}{cc}
                 a & 0 \\
                 0 & b \\
               \end{array}
             \right),\left(
               \begin{array}{cc}
                 b & 0 \\
                 0 & u_nau_n^* \\
               \end{array}
             \right)
\right) $$
for $(a,b)\in A_n$. Define an automorphism $\alpha^{(n)}$ of $A_n$ by $\alpha^{(n)}(a,b)=(u_nbu_n^*,a)$
for $(a,b)\in A_n$. Since $u_n$ has order two, it is easy to see that $\alpha^{(n)}$ has order four, so it defines an action
of $\Z_4$ on $A_n$. It is also readily checked that there is a direct limit action $\alpha=\varinjlim \alpha^{(n)}$ of $\Z_4$ on
$A=\varinjlim A_n$. Finally, the direct limit algebra $A$ is easily seen to be isomorphic to the CAR algebra $M_{2^\I}$ by classification.\\
\indent As proved in \cite{phillips equivariant k-theory book}, with $p=(1,0)\in \C\oplus \C\subseteq A$, it is easy to show that
$\alpha^2$ is the action of conjugation by the unitary $2p-1$, so $\alpha|_{\Z_2}$ is in fact inner. In particular, $\alpha$ is not
pointwise outer so it does not have finite Rokhlin dimension, with or without commuting towers, by Theorem \ref{pointwise outer} below.\\
\indent The crossed product $A\rtimes_\alpha\Z_4$ is the direct limit of the inductive system
$$A_1\otimes C^*(\Z_4)\to A_2\otimes C^*(\Z_4)\to \cdots \to A\rtimes_\alpha \Z_4.$$
The computation of the connecting maps is routine, and yields an isomorphism $A\rtimes_\alpha\Z_4\cong M_{2^\I}$, which is best seen
using Bratteli diagrams. (Alternatively, one can
compute the equivariant $K$-theory of $A$, as is done in \cite{phillips equivariant k-theory book}.) To show that $\alpha$ has discrete
$K$-theory, it suffices to observe that the dual action acts via approximately inner automorphisms, since every automorphism of a
UHF-algebra is approximately inner. In particular,
$$I_{\Z_4}\cdot K_\ast^{\Z_4}(A)\cong \mbox{Im}\left(1-K_\ast(\widehat{\alpha}_1)\right)=0,$$
as desired.
\end{eg}

We turn to the comparison with locally discrete $K$-theory and total $K$-freeness.

\begin{df} (See Definitions 4.1.1, 4.2.1 and 4.2.4 of \cite{phillips equivariant k-theory book}.)
Let $G$ be a compact group, let $A$ be a unital \ca\ and let $\alpha\colon G\to\Aut(A)$ be a continuous action.
\be\item We say that $\alpha$ has \emph{locally discrete $K$-theory} if for every prime ideal $P$ of $R(G)$ not
containing the augmentation ideal $I_G$, the localization $K_\ast^G(A)_P$ is zero.
\item We say that $\alpha$ is \emph{$K$-free} if for every invariant ideal $I$ of $A$, the induced action
$\alpha|_I\colon G\to\Aut(I)$ has locally discrete $K$-theory.
\item We say that $\alpha$ is \emph{totally $K$-free} if for every closed subgroup $H$ of $G$, the restriction
$\alpha|_H$ is $K$-free.\ee
\end{df}

\begin{cor}
Let $G$ be a compact Lie\ group, let $A$ be a \uca\ and let $\alpha \colon G \to \Aut (A)$
be an action with finite Rokhlin dimension with commuting towers. Then $\alpha$ has locally discrete $K$-theory.
\end{cor}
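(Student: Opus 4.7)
The plan is to derive this statement as a routine algebraic consequence of the preceding Corollary \ref{cor: discrete K-thy}, which already gives us the stronger-looking (but actually more restrictive) conclusion of discrete $K$-theory: there exists $n\in\N$ with $I_G^n\cdot K_\ast^G(A)=0$. The goal is then to promote this global annihilation to the vanishing of every localization $K_\ast^G(A)_P$ at primes $P\subseteq R(G)$ with $I_G\not\subseteq P$.

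First I would invoke Corollary \ref{cor: discrete K-thy} to fix an integer $n\in\N$ such that $I_G^n\cdot K_\ast^G(A)=0$. Next, let $P$ be a prime ideal of $R(G)$ with $I_G\not\subseteq P$, and choose some $x\in I_G\setminus P$. Since $P$ is prime, $x^n\notin P$, so $x^n$ lies in the multiplicative set $R(G)\setminus P$ used to form the localization. In particular, $x^n$ becomes a unit in $R(G)_P$.

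The final step is the standard observation that if an element of $R(G)$ both annihilates the module $K_\ast^G(A)$ and becomes invertible in the localization at $P$, then the localization is zero. Concretely, for any fraction $\eta/s\in K_\ast^G(A)_P$ with $\eta\in K_\ast^G(A)$ and $s\in R(G)\setminus P$, we have $x^n\eta=0$ in $K_\ast^G(A)$ because $x^n\in I_G^n$, and therefore in the localization
\[
\eta/s \;=\; (x^n\eta)/(x^n s) \;=\; 0.
\]
Hence $K_\ast^G(A)_P=0$ for every such prime $P$, which is precisely the definition of locally discrete $K$-theory.

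There is no real obstacle here; the substance of the argument is entirely contained in Corollary \ref{cor: discrete K-thy} (which in turn relies on Theorem \ref{strongly free and finite Rdim} and Corollary 4.3 of \cite{HP}), and what remains is the elementary commutative-algebra fact that an ideal annihilating a module forces the module to vanish after localizing at any prime not containing that ideal.
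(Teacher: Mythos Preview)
Your proof is correct and follows essentially the same route as the paper: invoke Corollary~\ref{cor: discrete K-thy} to obtain discrete $K$-theory, and then deduce locally discrete $K$-theory. The only difference is that the paper cites Proposition~4.1.3 of \cite{phillips equivariant k-theory book} for the implication ``discrete $\Rightarrow$ locally discrete,'' whereas you spell out the elementary commutative-algebra argument (an element of $I_G\setminus P$ has a power that annihilates the module and becomes a unit in the localization); this is precisely the content of that reference, so the two proofs are the same in substance.
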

\begin{proof}
The action $\alpha$ has discrete $K$-theory by Corollary \ref{cor: discrete K-thy}. It then follows from
the equivalence between (1) and (2) in Proposition 4.1.3 of \cite{phillips equivariant k-theory book} that
$\alpha$ has locally discrete $K$-theory.\end{proof}

\begin{cor}\label{cor: totKfree}

Let $G$ be a compact Lie group, let $A$ be a \uca\ and let $\alpha \colon G \to \Aut (A)$
be an action with finite Rokhlin dimension with commuting towers. Then $\alpha$ is totally $K$-free.
\end{cor}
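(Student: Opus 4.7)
The plan is to combine the permanence results already established in the excerpt with Corollary~\ref{cor: discrete K-thy} and a diagram chase in the equivariant six-term exact sequence. Fix a closed subgroup $H \leq G$; by Cartan's theorem $H$ is again a compact Lie group, so the hypotheses of Corollary~\ref{cor: discrete K-thy} are available for $H$-actions. By Theorem~\ref{restrictions}, the restriction $\alpha|_H$ still has finite Rokhlin dimension with commuting towers. It then suffices, for each $\alpha|_H$-invariant ideal $I \subseteq A$, to show that the action $\alpha|_H$ on $I$ has locally discrete $K$-theory.

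The key observation is that although the definition of finite Rokhlin dimension in the paper is formulated only for unital algebras, so that we cannot directly apply the earlier corollary to $I$, we can instead work with the quotient $A/I$. By Theorem~\ref{thm: permanence properties}(2), the induced action $\overline{\alpha|_H}$ of $H$ on $A/I$ also has finite Rokhlin dimension with commuting towers. Applying Corollary~\ref{cor: discrete K-thy} both to $\alpha|_H$ and to $\overline{\alpha|_H}$, we obtain an integer $n \in \N$ such that
\[
I_H^n \cdot K_\ast^H(A) = 0 \andeqn I_H^n \cdot K_\ast^H(A/I) = 0.
\]

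Now I would use the $R(H)$-equivariant six-term exact sequence associated to $0 \to I \to A \to A/I \to 0$. Since the connecting homomorphisms are $R(H)$-linear, a standard diagram chase yields $I_H^{2n} \cdot K_\ast^H(I) = 0$: given $\xi \in K_\ast^H(I)$ and $r \in I_H^n$, the image of $r\xi$ in $K_\ast^H(A)$ vanishes, so by exactness $r\xi = \partial \eta$ for some $\eta \in K_{\ast+1}^H(A/I)$; for any $s \in I_H^n$ we have $s\eta = 0$, and hence $sr\xi = \partial(s\eta) = 0$. Thus $\alpha|_H$ acting on $I$ has discrete $K$-theory, which by the equivalence of (1) and (2) in Proposition~4.1.3 of \cite{phillips equivariant k-theory book} forces locally discrete $K$-theory. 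Since $H$ was arbitrary, $\alpha$ is totally $K$-free.

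The main obstacle is purely organizational: the notion of Rokhlin dimension is stated only for unital algebras, so one cannot talk about the Rokhlin dimension of the action on the (generally nonunital) ideal $I$ directly. The trick is to route everything through the quotient, where the unital definition and Corollary~\ref{cor: discrete K-thy} apply, and then transport the annihilation statement back to $I$ via the six-term sequence. The $R(H)$-linearity of the connecting maps — which is standard and underlies the module-theoretic arguments of \cite{phillips equivariant k-theory book} — is what makes this transport work.
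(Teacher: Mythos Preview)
Your argument is correct and follows essentially the same route as the paper's proof: restrict to $H$ via Theorem~\ref{restrictions}, pass to the quotient $A/I$ via Theorem~\ref{thm: permanence properties}(2), apply Corollary~\ref{cor: discrete K-thy} to both $A$ and $A/I$, and then transfer the conclusion to $I$ through the equivariant six-term sequence. The only cosmetic difference is that the paper packages the last step as a citation of Lemma~4.1.4 in \cite{phillips equivariant k-theory book} (the two-out-of-three lemma for locally discrete $K$-theory in a short exact sequence), whereas you spell out the diagram chase explicitly and obtain the slightly sharper statement that $I_H^{2n}$ annihilates $K_\ast^H(I)$ before invoking Proposition~4.1.3.
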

\begin{proof}
Let $H$ be a closed subgroup of $G$ and let $I$ be an $H$-invariant ideal of $A$. Since the restriction
of $\alpha$ to $H$ has finite Rokhlin dimension with commuting towers by Theorem \ref{restrictions}, we
may assume that $H=G$, so that $I$ is $G$-invariant. We have to show that the induced action of $G$ on
$I$ has locally discrete $K$-theory.\\
\indent Since the induced action of $G$ on $A/I$ has finite Rokhlin dimension with commuting towers by
part (2) of Theorem \ref{thm: permanence properties}, it follows from the corollary above that it has
locally discrete $K$-theory. In particular, the extension
$$0\to I\to A\to A/I\to 0$$
is $G$-equivariant, and the actions on $A$ and $A/I$ have locally discrete $K$-theory. The result now
follows from Lemma 4.1.4 of \cite{phillips equivariant k-theory book}.\end{proof}

Even total $K$-freeness is not equivalent to finite Rokhlin dimension.

\begin{eg}\label{eg: tot K-free not Rdim} Let $\alpha$ be the trivial action of $\Z_2$ on $\Ot$. Then $\alpha$ is
 readily seen to be totally $K$-free, but it clearly does not have finite Rokhlin dimension, with or without
commuting towers, by Theorem \ref{pointwise outer} below.
\end{eg}

Recall that an action of a locally compact group $G$ on a \ca\ $A$ is said to be \emph{pointwise outer} (and
sometimes just \emph{outer}), if for every $g\in G\setminus \{1\}$, the automorphism $\alpha_g$ of $A$ is not inner.

\begin{thm}\label{pointwise outer} Let $A$ be a \uca, let $G$ be a compact Lie group, and let $\alpha\colon G
\to\Aut(A)$ be a continuous action. If $\dimRok(\alpha)<\I$, then $\alpha$ is pointwise outer.\end{thm}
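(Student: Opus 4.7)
The plan is to argue by contrapositive. Suppose $\alpha$ is not pointwise outer, so there exist $g \in G \setminus \{1\}$ and a unitary $u \in A$ with $\alpha_g = \Ad(u)$; I will show that $\dimRok(\alpha) = \I$.

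The decisive observation is that the induced automorphism $(\alpha_\I)_g$ acts as the identity on $A_{\I,\alpha}\cap A'$. Indeed, viewing $u$ as a constant sequence in $A_\I$, the automorphism $(\alpha_\I)_g$ is just $\Ad(u)$, and every element of $A_\I \cap A'$ commutes with $u \in A \subseteq A_\I$, hence is fixed.

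Assume for contradiction that $\dimRok(\alpha) = d < \I$, witnessed by equivariant c.p.c.\ order zero maps $\varphi_0, \ldots, \varphi_d \colon C(G) \to A_{\I,\alpha} \cap A'$ with $\sum_{j=0}^d \varphi_j(1) = 1$. By the previous observation, equivariance at $g$ collapses to $\varphi_j(\texttt{Lt}_g f) = \varphi_j(f)$ for every $f \in C(G)$ and every $j$. Since $g \neq 1$ and $G$ is Hausdorff, I can choose a nonzero positive $f \in C(G)$ with $\supp(f) \cap g\cdot\supp(f) = \E$. Then $f$ and $\texttt{Lt}_g f$ are orthogonal positive elements of $C(G)$, so the order zero property yields $\varphi_j(f) \perp \varphi_j(\texttt{Lt}_g f) = \varphi_j(f)$. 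Thus $\varphi_j(f)^2 = 0$, and positivity of $\varphi_j(f)$ forces $\varphi_j(f) = 0$.

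By the corollary following Definition \ref{def finite Rdim}, each $\varphi_j$ is either zero or injective, so vanishing on the nonzero $f$ forces every $\varphi_j$ to be identically zero, contradicting $\sum_j \varphi_j(1) = 1$. The only nontrivial ingredient is the identification of how $(\alpha_\I)_g$ acts on the relative commutant; once this is in place the rest is essentially formal, using only order-zero orthogonality and the fact that left translation acts transitively on $G$ (which is what implicitly rules out a nontrivial equivariant ideal $\ker \varphi_j \subsetneq C(G)$ in the alternative argument).
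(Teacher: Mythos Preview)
Your proof is correct and somewhat cleaner than the paper's. Both arguments rest on the same core mechanism---an inner automorphism $\alpha_g=\Ad(u)$ acts trivially on $A_{\I,\alpha}\cap A'$---but they exploit it differently. The paper routes the argument through the universal \ca\ $C$ of Lemma~\ref{lma: univ calg order zero maps}(1), produces a positive element $a\in C$ with $a\perp\gamma_g(a)$ and $\|\varphi(a)-\varphi(\gamma_g(a))\|=1$, then lifts via Choi--Effros to $A$ and derives a numerical contradiction from the approximate commutation $\|[\psi(a),u]\|<\ep$. You instead stay in $A_{\I,\alpha}\cap A'$, use the \emph{exact} identity $(\alpha_\I)_g=\id$ there, and finish by invoking the injectivity corollary after Definition~\ref{def finite Rdim}. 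Your approach is shorter, avoids the universal-algebra and lifting machinery, and makes the role of order zero and translation-invariance of $\ker\varphi_j$ more transparent; the paper's approach is more self-contained in that it does not appeal to that earlier corollary, but at the cost of extra technical overhead.
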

We point out that we do not assume that $\alpha$ has finite Rokhlin dimension with commuting towers, unlike in most
other results in this section.
\begin{proof} Let $g\in G\setminus\{1\}$ and assume that $\alpha_{g_0}$ is inner, say $\alpha_{g_0}=\Ad(u)$ for some $u\in\U(A)$.
Let $C$, let $\gamma\colon G\to \Aut(C)$ and let
$$\varphi\colon C\to A_{\I,\alpha}\cap A'$$
be the unital \ca, the action of $G$ on $C$, and the unital equivariant homomorphism obtained as in the conclusion
of part (1) of Lemma \ref{lma: univ calg order zero maps}. \\
\ \\
\indent We claim that there exists a positive element $a$ in $C$ with the following properties:
\bi\item The elements $a$ and $\gamma_g(a)$ are orthogonal.
\item $\|\varphi(a)\|=\|\varphi(\gamma_g(a))\|=\|\varphi(a)-\varphi(\gamma_g(a))\|=1$.\ei
Set $d=\dimRok(\alpha)$. Recall from the proof of Lemma \ref{lma: univ calg order zero maps} that $C$ is the quotient of the \ca\
$$D=\bigast _{j=0}^d C_0((0,1]\times G)$$
by the ideal $I$ generated by
$$\left\{\left(\sum\limits_{j=0}^d\id_{(0,1]}\ast 1_{C(G)}\right)c-c\colon c\in D\right\}.$$
Denote by $\pi\colon D\to C$ the quotient map.
Choose a positive function $f$ in $C(G)$ such that the supports of $\texttt{Lt}_g(f)$ and $f$ are disjoint. Set
$b=\id_{(0,1]}\otimes f \in C_0((0,1])\otimes C(G)$, and regard it as an element in $D$ via the embedding of
$C_0((0,1])\otimes C(G)$ as the first free factor. We claim that $\pi(b)\neq 0$. Indeed, if $\pi(b)=0$, then
$\pi(\delta_h(b))=0$ for all $h$ in $G$. Since the action of translation of $G$ on itself is transitive, we conclude
that the first free factor of $D$ is contained in the kernel of $\pi$. Now, this contradicts the fact that $d=\dimRok(\alpha)$,
since it shows that the definition of finite Rokhlin dimension for $\alpha$ is satisfied with $d-1$ order zero maps. This shows
that $\pi(b)\neq 0$. \\
\indent Upon renormalizing $b$, we may assume that $a=\pi(b)$ is positive and has norm 1. It is clear that $a$ and
$\gamma_g(a)=\pi(\delta_g(b))$ are orthogonal, and that $\gamma_g(a)$ is positive and has norm 1. Finally, it follows
from orthogonality of $a$ and $\gamma_g(a)$ that $\|\varphi(a)-\varphi(\gamma_g(a))\|=1$. This proves the claim.\\
\ \\
\indent Let $\ep=\frac{1}{3}$. Using Choi-Effros lifting theorem, find a completely positive contractive map $\psi\colon C\to A$
satisfying the following conditions:
\be\item $\|[\psi(a),u]\|<\ep$;
\item $\|\psi(\gamma_g(a))-\alpha_g(\psi(a))\|<\ep$;
\item $|\|\psi(a)-\psi(\gamma_g(a))\|-1|<\ep$.\ee
We have
\begin{align*} \frac{2}{3}&=1-\ep<\|\psi(a)-\psi(\gamma_g(a))\| \leq \ep+\|\psi(a)-\alpha_g(\psi(a))\|\\
&= \ep+\|\psi(a)-u\psi(a)u^*\| \leq 2\ep=\frac{2}{3},\end{align*}
which is a contradiction. This contradiction implies that $\alpha_g$ is not inner, thus
showing that $\alpha$ is pointwise outer. \end{proof}

In the case of commuting towers, the converse to the theorem above fails quite drastically, and there are many examples of compact
group actions that are pointwise outer and have infinite Rokhlin dimension with commuting towers. See Example
\ref{eg: gauge action Ot}, where it is shown that the gauge action on $\Ot$ has infinite Rokhlin dimension with
commuting towers, and see Example \ref{eg: commuting matters} for an example where the acting group is $\Z_2$.
The second one has finite Rokhlin dimension with non-commuting towers (in fact, Rokhlin dimension 1). We do not
know whether the Rokhlin dimension of the gauge action on $\Ot$ is finite. It is known, however, that all of its
restrictions to finite subgroups of $\T$ have Rokhlin dimension with non commuting towers equal to 1.  \\
\indent On the other hand, we do not know exactly how badly the converse to the theorem above fails in the case of
non-commuting towers, although we know it does not hold in full generality.

\begin{eg}\label{eg: pointwise outer not Rdim} The action $\alpha$ of $\Z_2$ on $S^1$ given
by conjugation has two fixed points, so it is not free, and hence $\dimRok(\alpha)=\I$. On the other hand, $\alpha$ is
certainly pointwise outer since it is not trivial.\end{eg}

\subsection{A rigidity result}

Using the fact that compact group actions with finite Rokhlin dimension with commuting towers are totally
$K$-free by Corollary \ref{cor: totKfree}, we are able to obtain a certain $K$-theoretical obstruction for
a \uca\ to admit such an action. As a consequence of this $K$-theoretical obstruction, we confirm a conjecture
of Phillips.

\begin{thm}\label{K-thy restrictions for K-freeness} Let $G$ be a compact Lie group, let
$A$ be a \ca, and let $\alpha\colon G\to\Aut(A)$ be a continuous action. Assume that one and
only one of either $K_0(A)$ or $K_1(A)$ vanishes. If $\alpha$ is totally $K$-free, then
$G$ is finite.\end{thm}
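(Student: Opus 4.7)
The strategy is to reduce to the case $G=\T$ via a maximal torus argument and then exploit the Pimsner--Voiculescu exact sequence for the dual $\Z$-action together with the local nilpotence on $K$-theory coming from $K$-freeness.

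First, since $G$ is an infinite compact Lie group, its identity component $G_0$ has positive dimension, so the maximal torus of $G_0$ contains a closed subgroup $H\cong\T$. As every closed subgroup of $H$ is a closed subgroup of $G$, the restriction $\alpha|_H$ remains totally $K$-free by definition. We may therefore replace $\alpha$ by $\alpha|_H$ and assume from the outset that $G=\T$.

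Next, consider the dual action $\widehat{\alpha}\colon\Z\cong\widehat\T\to\Aut(A\rtimes_\alpha\T)$. Takai duality gives $(A\rtimes_\alpha\T)\rtimes_{\widehat\alpha}\Z\cong A\otimes\K$, so the Pimsner--Voiculescu sequence applied to $\widehat\alpha$ reads
\[
K_0(A\rtimes\T)\xrightarrow{1-\phi_0}K_0(A\rtimes\T)\to K_0(A)\to K_1(A\rtimes\T)\xrightarrow{1-\phi_1}K_1(A\rtimes\T)\to K_1(A)\to K_0(A\rtimes\T),
\]
where $\phi_i=K_i(\widehat\alpha_1)$. Julg's theorem (Theorem~\ref{Julg}) identifies $K_i(A\rtimes\T)$ with $K_i^\T(A)$ as $R(\T)$-modules, and under this identification $1-\phi_i$ is the action of the canonical generator $1-\tau$ of the augmentation ideal $I_\T\subset R(\T)\cong\Z[\widehat\T]$. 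Since $\alpha$ is $K$-free (taking the ideal $I=A$ in the definition of total $K$-freeness), $K_i^\T(A)$ has locally discrete $K$-theory, which amounts to saying that each element is annihilated by some power of $1-\tau$; equivalently, $1-\phi_i$ acts locally nilpotently on $K_i(A\rtimes\T)$.

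To finish, suppose for contradiction that one of $K_0(A),K_1(A)$ vanishes while the other does not. By symmetry we may assume $K_1(A)=0$ and aim to show $K_0(A)=0$. Exactness of the PV sequence at the first $K_0(A\rtimes\T)$ forces $1-\phi_0$ to be injective there; combined with local nilpotence this yields $K_0(A\rtimes\T)=0$. The sequence then collapses to a short exact sequence
\[
0\to K_0(A)\to K_1(A\rtimes\T)\xrightarrow{1-\phi_1}K_1(A\rtimes\T)\to 0,
\]
exhibiting $K_0(A)\cong\ker(1-\phi_1)$, with $1-\phi_1$ both surjective and locally nilpotent on $K_1(A\rtimes\T)$. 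The hard step is concluding that $K_1(A\rtimes\T)=0$: for an abstract $R(\T)$-module, a locally nilpotent surjective endomorphism can have nontrivial kernel (for instance, the shift on $\bigoplus_{i\geq 0}\Z$), so one must invoke the full strength of total $K$-freeness, not merely $K$-freeness. The plan is to apply the analogous arguments to each restriction $\alpha|_{\Z_n}$, which are $K$-free by hypothesis, and to compare the resulting structures via the restriction maps $R(\T)\to R(\Z_n)$ and the compatible equivariant $K$-theory groups; this should propagate the vanishing from all finite cyclic subgroups upward to force $K_1(A\rtimes\T)=0$. This gives $K_0(A)=0$, contradicting the hypothesis that exactly one of $K_0(A),K_1(A)$ vanishes, and so $G$ must have been finite to begin with.
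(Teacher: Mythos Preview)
Your overall strategy—reduce to $G=\T$ via a maximal torus, run the Pimsner--Voiculescu sequence for the dual $\Z$-action, and exploit nilpotence of $1-\widehat{\alpha}_*$—is exactly the paper's, and your deduction that $K_0(A\rtimes_\alpha\T)=0$ from injectivity plus local nilpotence is fine.

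The gap is precisely the step you label ``hard'' and then do not actually carry out. Your proposed detour through the restrictions $\alpha|_{\Z_n}$ is left as a plan: you do not explain how comparing $K_*^{\Z_n}(A)$ along the maps $R(\T)\to R(\Z_n)$ would propagate vanishing back up to $K_1(A\rtimes_\alpha\T)$, and there is no obvious mechanism for this. As written, the argument is incomplete.

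The paper closes this step differently and with no subgroup bookkeeping. It invokes \emph{discrete} $K$-theory rather than merely locally discrete $K$-theory: there is a single $n\in\N$ with $I_\T^{\,n}\cdot K_*^\T(A)=0$, i.e.\ $(1-\widehat{\alpha}_*)^n=0$ globally on $K_*(A\rtimes_\alpha\T)$. In your short exact sequence the map $1-\widehat{\alpha}_1$ is surjective, so $(1-\widehat{\alpha}_1)^n$ is surjective as well; but $(1-\widehat{\alpha}_1)^n=0$, which forces $K_1(A\rtimes_\alpha\T)=0$ and hence $K_0(A)=0$. Your shift example on $\bigoplus_{i\geq 0}\Z$ is exactly the obstruction to running this with only \emph{local} nilpotence, and it shows why the uniform $n$ is the point. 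So the repair is not to pass to finite cyclic subgroups, but to use global nilpotence from the start; once you have that, the last step is a one-liner.
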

\begin{proof} We will show the result assuming that $K_1(A)=0$; the corresponding proof for
the case $K_0(A)=0$ is analogous. \\
\indent By restricting to the connected component of the identity in $G$, and recalling that
$K$-freeness passes to subgroups, we can assume that $G$ is connected. Assume that $G$ is not
the trivial group. By further restricting to any copy of the circle inside a maximal torus,
we may assume that $G=\T$. Having discrete $K$-theory, there exists $n\in\N$ such that
$I_\T^n\cdot  K_\ast^\T(A)=0$. Equivalently,
$$\ker((\id_{K_\ast(A\rtimes_\alpha\T)}-\widehat{\alpha}_\ast)^n)=K_\ast(A\rtimes_\alpha\T).$$
Using that $K_1(A)=0$, it follows from
the Pimsner-Voiculescu exact sequence associated to $\alpha$ (see Subsection 10.6 in \cite{blackadar K-theory}),
\beqa\xymatrix{K_0(A\rtimes_\alpha\T)\ar[rr]^-{1-K_0(\widehat{\alpha})}&&K_0(A\rtimes_\alpha\T)\ar[rr]&&K_0(A)\ar[d]\\
K_1(A)\ar[u]&&K_1(A\rtimes_\alpha\T)\ar[ll]&&K_1(A\rtimes_\alpha\T)\ar[ll]^-{1-K_1(\widehat{\alpha})},}\eeqa
that the map $\id_{K_0(A\rtimes_\alpha\T)}-\widehat{\alpha}_0$ is injective. This implies that
$K_0(A\rtimes_\alpha\T)=0$ and the remaining potentially non-zero terms in the Pimsner-Voiculescu
exact sequence yield the short exact sequence
$$0\to K_0(A)\to K_1(A\rtimes_\alpha\T)\to K_1(A\rtimes_\alpha\T)\to 0,$$
where the last map is $\id_{K_1(A\rtimes_\alpha\T)}-\widehat{\alpha}_1$. Being surjective, every
power of it is surjective, and hence the identity
$$\ker((\id_{K_1(A\rtimes_\alpha\T)}-\widehat{\alpha}_1)^n)=K_1(A\rtimes_\alpha\T)$$
forces $K_1(A\rtimes_\alpha\T)=0$. In this case,
it must be $K_0(A)=0$ as well, which contradicts the fact that $K_0(A)$ is not zero.\end{proof}

Recall that an AF-action is an action on an AF-algebra obtained as a direct limit of actions on finite
dimensional \ca s. It was shown in \cite{blackadar symm of CAR alg} that not every action on an AF-algebra is
an AF-action, even when the group is $\Z_2$.

\begin{rem} \label{rem: conjecture}
Conjecture 9.4.9 in \cite{phillips equivariant k-theory book} says that there does not
 exist a totally $K$-free AF-action of a non-trivial connected compact Lie group on an AF-algebra.
Theorem \ref{K-thy restrictions for K-freeness} above confirms this conjecture of Phillips, for a
much larger class of \ca s, and without assuming that the action is specified by the way it is
constructed. \end{rem}

\begin{cor}\label{cor: no actions with finite Rdim on AF} No non-finite compact Lie group admits an action with finite Rokhlin dimension with commuting towers on a
\ca\ with exactly one vanishing $K$-group. In particular, there are no such actions on AF-algebras, AI-algebras, the Cuntz algebras $\mathcal{O}_n$ for $n\in\{3,\ldots,\I\}$, or the
Jiang-Su algebra $\mathcal{Z}$. \end{cor}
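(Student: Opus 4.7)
The plan is to chain together two results already proved in the excerpt. First, Corollary \ref{cor: totKfree} turns finite Rokhlin dimension with commuting towers into total $K$-freeness for any compact Lie group action. Second, Theorem \ref{K-thy restrictions for K-freeness} then forces the acting compact Lie group $G$ to be finite whenever exactly one of $K_0(A)$ and $K_1(A)$ vanishes. Combining these two, the first assertion is an immediate two-line consequence.

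For the ``in particular'' clause, I would just verify that each of the listed classes of \ca s satisfies the $K$-theoretic hypothesis of the general statement. An \AFa\ has $K_1=0$, and in the unital case $K_0$ contains the class of the unit, hence is nonzero; an \AIa\ behaves in the same way, with $K_1=0$ inherited from the interval building blocks $C([0,1])\otimes F$ with $F$ finite dimensional. For the Cuntz algebras, one has $K_1(\mathcal{O}_n)=0$ for all $n$, while $K_0(\mathcal{O}_n)=\Z/(n-1)\Z$ for $n<\I$ (nonzero as soon as $n\geq 3$) and $K_0(\mathcal{O}_\I)=\Z$; this is precisely why $\mathcal{O}_2$ is absent from the list, since both of its $K$-groups vanish and the hypothesis of Theorem \ref{K-thy restrictions for K-freeness} fails. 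Finally, the Jiang--Su algebra satisfies $K_0(\mathcal{Z})=\Z$ and $K_1(\mathcal{Z})=0$.

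I do not expect any substantive obstacle: all the real work is already done in Theorem \ref{K-thy restrictions for K-freeness}, which was established through a Pimsner--Voiculescu computation together with a reduction to the circle case via a maximal torus. The corollary is essentially a clean repackaging of that theorem for the main classes of examples, using Corollary \ref{cor: totKfree} to replace the abstract hypothesis of total $K$-freeness by the more concrete hypothesis of finite Rokhlin dimension with commuting towers.
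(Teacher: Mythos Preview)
Your proposal is correct and matches the paper's intended argument: the corollary is stated without proof in the paper, as it is meant to follow immediately from Corollary~\ref{cor: totKfree} and Theorem~\ref{K-thy restrictions for K-freeness}, together with the standard $K$-theory computations for the listed examples that you spell out. Your observation that $\mathcal{O}_2$ is excluded precisely because both of its $K$-groups vanish is also exactly the point.
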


We make some comments about what happens for finite groups. Many AF-algebras (although not all of them) as well as all Cuntz
algebras $\mathcal{O}_n$ with $n< \I$, admit finite group actions with finite Rokhlin dimension.
In fact, they even admit actions of some finite groups with the \Rp\ (although there are severe
restrictions on the cardinality of the group in each case). On the other hand, Theorem 4.7
in \cite{HP} asserts that $\OI$ and $\mathcal{Z}$ do not admit \emph{any} finite group
action with finite Rokhlin dimension.\\
\ \\
\indent We specialize to AF-algebras now, since a little more can be said in this case. Recall
that an action $\alpha$ of
a locally compact group $G$ on a \uca\ is said to be \emph{inner} if there exists a continuous
group homomorphism $u\colon G\to\U(A)$ such that $\alpha_g=\Ad(u(g))$ for all $g\in G$.

\begin{df}\label{df: loc repr}
An AF-action $\alpha$ of a locally compact group $G$ on a \uca\ $A$ is said to be
\emph{locally representable} if there exists a sequence $(A_n)_{n\in\N}$ of unital finite dimensional
subalgebras of $A$ such that
\bi\item $\bigcup\limits_{n\in\N} A_n$ is dense in $A$
\item $\alpha_g(A_n)\subseteq A_n$ for all $g\in G$ and all $n$ in $\N$.
\item $\alpha|_{A_n}$ is inner for all $n$ in $\N$. \ei
\end{df}

Product type actions on UHF-algebras are examples of locally representable actions. Such actions have been
classified in terms of their equivariant $K$-theory by Handelman and Rossmann in \cite{handelman rossmann}.\\
\indent Using Theorem \ref{K-thy restrictions for K-freeness} and a result from \cite{phillips equivariant k-theory book},
we are able to describe all locally representable actions $\alpha$ of
a compact Lie group $G$ on an AF-algebra with $\cdimRok(\alpha)<\I$: the group $G$ must be finite, and all such actions are
conjugate to a specific model action $\mu^G$, so in particular they all have the Rokhlin property.
The model action $\mu^G\colon G\to \Aut(M_{|G|^\infty})$ is the infinite tensor product of copies of the left regular representation.
(We identify $M_{|G|}$ with $\K(\ell^2(G))$ in the usual way.) It is well known that $\mu^G$
(and any tensor product of it with any other action) has the Rokhlin property; see \cite{izumi finite group actions I}.

\begin{cor}\label{cor: loc rep AF-action} Let $G$ be a compact Lie group, let $A$ be a unital AF-algebra, and let $\alpha
\colon G\to\Aut(A)$ be a locally representable AF-action. Then \tfae:
\be\item $\alpha$ has the \Rp;
\item $\alpha$ has finite \Rdim\ with commuting towers;
\item $\alpha$ is totally $K$-free;
\item $\alpha$ has discrete $K$-theory. \ee
Moreover, if any of the above holds, then $G$ must be finite and there is an equivariant isomorphism 
$$(A,\alpha)\cong (A\otimes M_{|G|^\I},\id_A\otimes \mu^G).$$
In particular, $\alpha$ absorbs $\mu^G$ tensorially. \end{cor}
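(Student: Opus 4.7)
We prove the cycle $(1) \Rightarrow (2) \Rightarrow (3) \Rightarrow (4) \Rightarrow (1)$ and then the moreover clause. The implication $(1) \Rightarrow (2)$ is Remark \ref{rem: Rp and Rdim}, and $(2) \Rightarrow (3)$ is Corollary \ref{cor: totKfree}. For $(3) \Rightarrow (4)$, total $K$-freeness applied to $H = G$ and $I = A$ gives locally discrete $K$-theory; in the presence of the finite-generation afforded by the locally representable structure (each inner action on the finite-dimensional $A_n$ yields $K_\ast^G(A_n) \cong K_\ast(A_n) \otimes_{\Z} R(G)$, a finitely generated free $R(G)$-module), this is equivalent to discrete $K$-theory by Proposition 4.1.3 of \cite{phillips equivariant k-theory book}, which gives $(4)$ after passing to the colimit.

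The core of the argument is $(4) \Rightarrow (1)$, and we first argue that $G$ must be finite. For a locally representable AF-action, the inductive limit description of the equivariant $K$-theory implies that discrete $K$-theory propagates to every restriction $\alpha|_H$: the connecting maps are compatible with the change of scalars along $R(G) \to R(H)$ (which sends $I_G$ into $I_H$), and each $K_\ast^H(A_n)$ remains a finitely generated free $R(H)$-module. Hence $\alpha$ is totally $K$-free. Since $A$ is AF we have $K_1(A) = 0$, so Theorem \ref{K-thy restrictions for K-freeness} forces $G$ to be finite. For finite $G$, the Rokhlin property follows from the classification of locally representable finite-group AF-actions by their equivariant $K$-theoretic data (Handelman--Rossmann \cite{handelman rossmann}): the vanishing of some power of $I_G$ on $K_\ast^G(A)$ translates, via the Bratteli diagram of the defining inductive system, into the existence at some finite stage of $G$-invariant systems of matrix units whose diagonal projections form Rokhlin towers, whence the Rokhlin property of $\alpha$ follows.

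For the moreover clause, $G$ has been shown to be finite and $\alpha$ has the Rokhlin property. Both the equivariant isomorphism $(A, \alpha) \cong (A \otimes M_{|G|^\infty}, \id_A \otimes \mu^G)$ and the tensorial absorption of $\mu^G$ now follow from a standard absorption argument for finite-group Rokhlin actions: the Rokhlin property supplies a $G$-equivariant unital embedding of $(M_{|G|^\infty}, \mu^G)$ into $A_{\infty,\alpha} \cap A'$, and a two-sided Elliott intertwining argument yields the desired conjugacy. The main obstacle lies in step $(4) \Rightarrow (1)$ in the finite-group case: translating the purely algebraic condition of discrete $K$-theory into the geometric existence of Rokhlin towers, which relies on careful use of the locally representable inductive-limit structure, or equivalently on the Handelman--Rossmann classification.
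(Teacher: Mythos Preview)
Your cycle $(1)\Rightarrow(2)\Rightarrow(3)$ matches the paper, but the equivalence $(3)\Leftrightarrow(4)$ and the structural isomorphism are handled differently in the paper, and your route has a genuine gap in the moreover clause.

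For $(3)\Leftrightarrow(4)$, the paper simply invokes Theorem~9.2.4 of \cite{phillips equivariant k-theory book}, which establishes this equivalence directly for locally representable AF-actions. Your $(3)\Rightarrow(4)$ argument is incomplete: locally discrete $K$-theory together with finite generation of each $K_\ast^G(A_n)$ does not obviously yield a \emph{uniform} power of $I_G$ annihilating the colimit $K_\ast^G(A)$. Likewise, your $(4)\Rightarrow(3)$ step (propagation of discrete $K$-theory to restrictions) is sketchy: the map $R(G)\to R(H)$ sends $I_G$ into $I_H$, but $K_\ast^H(A)$ is not just $K_\ast^G(A)$ viewed over $R(H)$, so the annihilation does not transfer formally. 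These are exactly the subtleties that Theorem~9.2.4 handles, and the paper wisely cites it rather than reproving it.

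The real gap is in the moreover clause. Your absorption argument via an equivariant embedding of $(M_{|G|^\infty},\mu^G)$ into $A_{\infty,\alpha}\cap A'$ and Elliott intertwining yields only
\[
(A,\alpha)\cong (A\otimes M_{|G|^\infty},\alpha\otimes\mu^G),
\]
which is the ``In particular'' statement. It does \emph{not} give the stronger conclusion
\[
(A,\alpha)\cong (A\otimes M_{|G|^\infty},\id_A\otimes\mu^G),
\]
where the action on the first tensor factor is trivial. This latter isomorphism is a genuine structural statement about locally representable AF-actions and cannot be obtained from the Rokhlin property alone; it requires the Handelman--Rossmann classification (equivalently, the implication from condition~(2) to condition~(5) in Theorem~9.2.4 of \cite{phillips equivariant k-theory book}). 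The paper obtains this isomorphism first, deduces the Rokhlin property from it (closing the loop $(3)\Rightarrow(1)$), and then gets tensorial absorption as a consequence of $\mu^G\cong\mu^G\otimes\mu^G$. Your order of deductions is inverted and the stronger isomorphism is left unproved.
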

\begin{proof} The implications (1) $\Rightarrow$ (2) $\Rightarrow$ (3) are true
in general. The equivalence between (3) and (4) follows from Theorem 9.2.4 of
\cite{phillips equivariant k-theory book}. In particular, any of the conditions (1) through (4) implies that $\alpha$ is
totally $K$-free, so $G$ must be finite by Theorem \ref{K-thy restrictions for K-freeness}. Now, the fact that the
second condition implies the fifth in Theorem 9.2.4 of \cite{phillips equivariant k-theory book}, which in turn follows
from the classification results in \cite{handelman rossmann}, shows that (3) implies the existence of
an equivariant isomorphism
$$(A,\alpha)\cong (A\otimes M_{|G|^\I},\id_A\otimes \mu^G).$$
We conclude that $\alpha$ has the Rokhlin property, so (3) implies (1).\\
\indent The final claim follows from the fact that $\mu^G$ absorbs itself tensorially.
\end{proof}

We close this section by summarizing the known implications between the notions we have studied in this section.

\begin{thm}\label{thm: summary}
Let $G$ be a compact Lie group, let $A$ be a \uca\ and let $\alpha\colon G\to\Aut(A)$ be a continuous action.
Consider the following conditions for the action $\alpha$:
\be
\item[$(a)$] Rokhlin property.
\item[$(b)$] Finite Rokhlin dimension with commuting towers, $\cdimRok(\alpha)<\I$.
\item[$(c)$] $X$-Rokhlin property for some free $G$-space $X$.
\item[$(d)$] Finite Rokhlin dimension, $\dimRok(\alpha)<\I$.
\item[$(e)$] Pointwise outerness.
\item[$(f)$] Discrete $K$-theory.
\item[$(g)$] Total $K$-freeness.
\ee

We have the following implications, where a theorem or corollary is referenced when it proves the implication in question:

\beqa\xymatrix{ &&& (c) &&& (d) \ar@{=>}[rrr]^-{\mbox{Theorem \ref{pointwise outer}}} &&& (e) \\
 &&&&&&&&& \\
(a)\ar@{=>}[rrr]^-{\mbox{Remark \ref{rem: Rp and Rdim}}} &&&
(b)\ar@{<=>}[uu]|>>>>>>>{\mbox{\mbox{Theorem \ref{strongly free and finite Rdim} \ \ \ }}}\ar@{=>}[uurrr]
\ar@{=>}[rrrrr]^-{\mbox{Corollary \ref{cor: discrete K-thy}}}\ar@{=>}[ddrrr]|{\mbox{Corollary \ref{cor: totKfree}}} &&&&& (f) & \\
 &&&&&&&&& \\
&&& &&& (g). &&& }\eeqa

None the above arrows can be reversed in full generality, and presumably there are no other implications between the stated
conditions. In the diagram below, a dotted arrow means that the implication does not hold in general, and in
each case a counterexample is referenced:

\beqa\xymatrix{ &&& &&& (d)\ar@{-->}[dddd]|<<<<<<<<<<{ \ \ \ \ \ \ \ \ \ \ \mbox{Example \ref{eg: commuting matters}} }
\ar@{-->}[ddlll]|{\mbox{Example \ref{eg: commuting matters} \ \ \ \ \ \ \ \ }} &&& (e)\ar@{-->}[lll]_-{\mbox{Example \ref{eg: pointwise outer not Rdim}}} \\
 &&&&&&&&& \\
(a)&&&
(b)\ar@{-->}[lll]_-{\mbox{Example \ref{eg: Rdim not Rp}}} &&&&&& (f)\ar@{-->}[llllll]_-{ \ \ \ \ \ \ \ \ \ \ \ \ \ \ \ \ \ \ \ \ \ \ \ \ \ \
\mbox{Example \ref{eg: discr K-thy not Rdim}}} \\
 &&&&&&&&& \\
&&& &&& (g)\ar@{-->}[uulll]|{\mbox{Example \ref{eg: tot K-free not Rdim}} \ \ \ \ \ \ \ \ }\ar@{-->}[uurrr]|{ \ \ \ \ \ \ \ \ \ \ \ \ \ \ \ \ \ \ \ \ \mbox{Example 4.1.7 in \cite{phillips equivariant k-theory book}}}. &&& }\eeqa

Finally, some of the arrows in the first diagram can be reversed in special situations:
\be\item If $A$ is commutative, then conditions $(b), (c), (d), (f)$ and $(g)$ are all equivalent to each other, and equivalent to freeness of the action
on the maximal ideal space, by Theorem \ref{strongly free and finite Rdim} and Atiyah-Segal completion theorem (see, for example, Theorem 1.1.1
in \cite{phillips equivariant k-theory book}). Condition $(e)$ is not equivalent to the others by Example \ref{eg: pointwise outer not Rdim}, and
neither is condition $(a)$ by Example \ref{eg: Rdim not Rp}.
\item If $A$ is an AF-algebra and $\alpha$ is a locally representable AF-action (see Definition \ref{df: loc repr}), then conditions $(a), (b),
(c), (f)$ and $(g)$ are equivalent by Corollary \ref{cor: loc rep AF-action}.
\item If $A$ is a Kirchberg algebra and $G=\Z_2$ (and possibly also if $G$ is any finite group), then $(e)$ and $(f)$ are
equivalent by Theorem 2.3 in \cite{BEMSW}. \ee
\end{thm}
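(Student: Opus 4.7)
My plan for this summary theorem is to verify each arrow in both diagrams and each special-case equivalence by collating results proved earlier, since the statement is essentially a compilation.

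First I would address the seven implications in the first diagram. The implication $(a) \Rightarrow (b)$ follows from Remark \ref{rem: Rp and Rdim}, since a single tower trivially has commuting ranges. The equivalence $(b) \Leftrightarrow (c)$ is exactly Theorem \ref{strongly free and finite Rdim}. The implications $(b) \Rightarrow (d)$ and $(c) \Rightarrow (d)$ are immediate from the definitions, since commuting ranges impose strictly more structure. The implication $(d) \Rightarrow (e)$ is Theorem \ref{pointwise outer}, whose proof explicitly does not require commuting towers. Finally, $(b) \Rightarrow (f)$ is Corollary \ref{cor: discrete K-thy} and $(b) \Rightarrow (g)$ is Corollary \ref{cor: totKfree}.

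Next I would record the non-implications in the second diagram by checking the properties of each referenced example. Example \ref{eg: Rdim not Rp} gives $(b) \not\Rightarrow (a)$, since the restriction of the translation action on $C(\T)$ has Rokhlin dimension $1$ but the Rokhlin property would demand projections that are not present in $C(S^1)$. Example \ref{eg: commuting matters} (Izumi's $\Z_2$-action on $\mathcal{O}_2$) simultaneously witnesses $(d) \not\Rightarrow (b)$ and $(d) \not\Rightarrow (g)$: it has Rokhlin dimension $1$ without commuting towers, but the dual action on $K_0(\mathcal{O}_2 \rtimes \Z_2) \cong \Z[1/2]$ is multiplication by $-1$, so no power of $I_{\Z_2}$ annihilates the equivariant $K$-theory, ruling out finite commuting-tower Rokhlin dimension by Corollary \ref{cor: discrete K-thy} and total $K$-freeness by Corollary \ref{cor: totKfree}. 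Example \ref{eg: pointwise outer not Rdim} gives $(e) \not\Rightarrow (d)$, since the conjugation action of $\Z_2$ on $S^1$ is pointwise outer but has fixed points, contradicting finite Rokhlin dimension by Theorem \ref{free action on spaces}. Example \ref{eg: discr K-thy not Rdim} (Phillips' $\Z_4$-action on the CAR algebra) gives $(f) \not\Rightarrow (b)$, since it has discrete $K$-theory but is inner on the index-two subgroup and hence not pointwise outer. Example \ref{eg: tot K-free not Rdim} (the trivial $\Z_2$-action on $\mathcal{O}_2$) gives $(g) \not\Rightarrow (b)$, while Example 4.1.7 in \cite{phillips equivariant k-theory book} gives $(g) \not\Rightarrow (f)$.

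Finally, I would handle the three special-case statements. In case (1), where $A$ is commutative, Theorem \ref{free action on spaces} identifies $(b)$ and $(d)$ with freeness of the induced action on the maximal ideal space; the Atiyah-Segal completion theorem identifies freeness with $(f)$ and $(g)$ for compact Lie group actions on compact spaces; and the non-equivalences with $(a)$ and $(e)$ are witnessed by Examples \ref{eg: Rdim not Rp} and \ref{eg: pointwise outer not Rdim} respectively. Case (2) is precisely the content of Corollary \ref{cor: loc rep AF-action}. Case (3) is an application of Theorem 2.3 in \cite{BEMSW}, which supplies the converse $(e) \Rightarrow (d)$ in the Kirchberg algebra setting with $G = \Z_2$. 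The only genuine obstacle is checking that each cited example supports all of the properties ascribed to it, but those verifications are already carried out in the respective examples, and the summary theorem merely consolidates the record.
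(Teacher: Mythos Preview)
Your collation is essentially what the paper does---the theorem has no separate proof beyond the references embedded in its statement---and your walk-through of the implications, counterexamples, and special cases tracks the paper's own organization. Two small points are worth flagging.

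First, your justification that Example~\ref{eg: commuting matters} witnesses $(d)\not\Rightarrow(g)$ invokes Corollary~\ref{cor: totKfree}, but that corollary says $(b)\Rightarrow(g)$; its contrapositive does not let you deduce failure of total $K$-freeness from failure of discrete $K$-theory. What actually rules out $(g)$ here is that, since $\mathcal{O}_2$ is simple and $\Z_2$ has no proper nontrivial subgroups, total $K$-freeness reduces to locally discrete $K$-theory of the action itself, and one checks directly that $K_0^{\Z_2}(\mathcal{O}_2)\cong\Z[\tfrac12]$ (with the generator of $R(\Z_2)$ acting by $-1$) does not localize to zero at the prime $(t+1)$. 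The paper, like you, simply cites the example without spelling this out, so the gap is shared; but the citation you give does not bridge it.

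Second, in special case~(3) you render the conclusion as $(e)\Rightarrow(d)$, whereas the paper's statement reads ``$(e)$ and $(f)$ are equivalent.'' Your version is what Theorem~2.3 of \cite{BEMSW} actually proves (pointwise outer $\Z_2$-actions on Kirchberg algebras have Rokhlin dimension at most one), and indeed Example~\ref{eg: commuting matters} is itself a pointwise outer $\Z_2$-action on a Kirchberg algebra \emph{without} discrete $K$-theory, so the paper's $(e)\Leftrightarrow(f)$ as written cannot be what is meant. Your reading is the mathematically sound one, but you should note that it departs from the letter of the stated theorem.
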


\section{Outlook and open problems}

In this last section, we give some indication of possible directions for future work, and raise some natural questions related
to our findings. \\
\indent Although some of our results, particularly in Section 4, assume that the acting group is a Lie group, this is
probably not needed everywhere. Our first suggested problem is then:

\begin{pbm} Extend some of the results in this paper to actions of not necessarily finite-dimensional compact groups.\end{pbm}

We point out that the assumption that $G$ be a Lie group in Corollary 4.3 in \cite{HP} is necessary, since it relies on
Atiyah-Segal completion Theorem (see \cite{AS}, and see Theorem 1.1.1 in \cite{phillips equivariant k-theory book}),
for which one needs the representation ring $R(G)$ to be finitely generated. We suspect that Corollary \ref{cor: discrete K-thy}
is not true in general for arbitrary compact groups (it probably already fails for actions on compact spaces), but it may be the case
that all compact group actions with finite Rokhlin dimension with commuting towers have \emph{locally} discrete $K$-theory.\\
\indent Somewhat related, we ask:

\begin{qst}\label{qst: X-Rp} 
Is finite Rokhlin dimension with commuting towers equivalent to the $X$-Rokhlin property 
for \emph{arbitrary} compact groups?\end{qst}

Maybe one should start with the commutative case:
 
\begin{qst}\label{qst: freeness}
Is finite Rokhlin dimension for actions on commutative \ca s, equivalent to freeness of the induced action on the maximal ideal space for \emph{arbitrary} compact groups?\end{qst}

For compact Lie groups, the answer is yes in both cases; see Theorem \ref{strongly free and finite Rdim} and
Theorem \ref{free action on spaces}. It was crucial in the
proofs of said theorems that free actions of compact Lie groups have local cross-sections. We suspect that the answer
to Question \ref{qst: X-Rp} and Question \ref{qst: freeness} is `no', and it is possible that the action of $G=\prod\limits_{n\in\N}\Z_n$ on
$X=\prod\limits_{n\in\N}S^1$ by coordinate-wise rotation is a counterexample (to both questions). Such action has the $X$-Rokhlin property and is free, but the quotient map $X\to X/G$ is known not to have local cross-sections, 
since $X$ is not locally homeomorphic to $X/G\times G$. We have not checked, however, whether this action
has finite Rokhlin dimension.\\
\ \\
\indent The results in \cite{BEMSW} show that for $\Z_2$-actions on Kirchberg algebras, pointwise outerness implies Rokhlin
dimension at most 1 with noncommuting towers. It is conceivable that a similar result holds for a larger class of finite groups,
and presumably all of them.

\begin{qst} Is pointwise outerness equivalent to finite Rokhlin dimension (with noncommuting towers) for finite group actions
on Kirchberg algebras? \end{qst}

If the question above has an affirmative answer, as the results in \cite{BEMSW} suggest, one may try to prove the corresponding result for simple unital separable nuclear
\ca s with tracial rank zero, where presumably some additional assumptions will be needed.\\
\indent Alternatively,

\begin{pbm} Find obstructions (not necessarily $K$-theoretical) for having an action of a finite (or compact) group with finite
Rokhlin dimension with noncommuting towers. \end{pbm}

The results in Section 4 of \cite{gardella classification on Otwo} suggest the following:

\begin{cnj} Let $G$ be a compact Lie group and let $\alpha\colon G\to\Aut(\mathcal{O}_2)$ be an action with finite Rokhlin dimension with commuting towers. Then $\alpha$ has the Rokhlin property. \end{cnj}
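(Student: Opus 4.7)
The plan is to deduce the Rokhlin property from equivariant tensorial absorption against a model Rokhlin action. By the classification of actions on $\mathcal{O}_2$ developed in \cite{gardella classification on Otwo}, every compact Lie group $G$ admits an action $\beta\colon G\to\Aut(\mathcal{O}_2)$ with the Rokhlin property. If one can establish an equivariant isomorphism $(\mathcal{O}_2,\alpha)\cong(\mathcal{O}_2,\alpha)\otimes(\mathcal{O}_2,\beta)$, then the $d=0$ case of part (1) of Theorem \ref{thm: permanence properties} forces $\dimRok(\alpha)=0$, i.e. $\alpha$ has the Rokhlin property. The strategy is thus to show that $\alpha$ equivariantly absorbs $\beta$.

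To reach this isomorphism, I would first invoke Theorem \ref{strongly free and finite Rdim} to extract, from the hypothesis $\cdimRok(\alpha)<\I$, a finite-dimensional compact free $G$-space $X$ together with a unital equivariant embedding $\varphi\colon C(X)\to \mathcal{O}_{2,\I,\alpha}\cap \mathcal{O}_2'$. Next, I would upgrade $\varphi$ to a unital equivariant embedding
\[
\Phi\colon (\mathcal{O}_2,\beta)\to (\mathcal{O}_{2,\I,\alpha}\cap \mathcal{O}_2',\alpha_\I),
\]
using the $C(X)$-skeleton provided by $\varphi$ together with Kirchberg's $\mathcal{O}_2$-embedding and absorption technology in an equivariant form. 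Once such a central equivariant embedding is available, an equivariant intertwining argument of Elliott--Kirchberg--Phillips type, of the sort developed in \cite{gardella classification on Otwo}, converts it into the desired isomorphism $(\mathcal{O}_2,\alpha)\cong(\mathcal{O}_2,\alpha)\otimes(\mathcal{O}_2,\beta)$.

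The main obstacle is the construction of $\Phi$. Since $(\mathcal{O}_2,\beta)$ can be realized as a suitable infinite equivariant tensor product on which $G$ acts freely, the natural route is to use $\varphi(C(X))$ as a reservoir of $G$-invariant central elements on which a sequence of approximately equivariant building blocks (with $G$-action matching $\beta$) can be assembled inside the relative commutant of $\varphi(C(X))$, and then to pass to a limit. The delicate point is to maintain equivariance up to unitary cocycles at every stage, and to identify at the limit the resulting $G$-action on $\mathcal{O}_2$ with $\beta$ up to cocycle conjugacy; here the freeness of the $G$-action on $X$ is crucial, as it prevents equivariant obstructions from persisting in the limit. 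For finite $G$ a very similar strategy is available via Izumi's methods (and Theorem 2.3 of \cite{BEMSW}); the essential content of the conjecture is that the $X$-Rokhlin viewpoint of Theorem \ref{strongly free and finite Rdim} is sufficiently strong to carry this procedure out for arbitrary compact Lie groups, and that no $K$-theoretic residue can obstruct absorption since $K_\ast(\mathcal{O}_2)=0$.
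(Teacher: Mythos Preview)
The statement you are addressing is labeled as a \emph{Conjecture} in the paper and appears in Section~5 (``Outlook and open problems''); the paper offers no proof, only the remark that the results of \cite{gardella classification on Otwo} suggest it. There is therefore no ``paper's own proof'' to compare against.

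Your proposal is a reasonable strategic outline, and indeed the absorption picture $(\mathcal{O}_2,\alpha)\cong(\mathcal{O}_2,\alpha)\otimes(\mathcal{O}_2,\beta)$ is the natural target. However, what you have written is not a proof but a restatement of the conjecture in a different form. You correctly identify the construction of the unital equivariant embedding $\Phi\colon(\mathcal{O}_2,\beta)\to(\mathcal{O}_{2,\I,\alpha}\cap\mathcal{O}_2',\alpha_\I)$ as ``the main obstacle'', and then describe in broad terms what one would like to do, finishing with the sentence ``the essential content of the conjecture is that the $X$-Rokhlin viewpoint \ldots\ is sufficiently strong to carry this procedure out''. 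That sentence concedes that the crux remains unproved. The phrases ``Kirchberg's $\mathcal{O}_2$-embedding and absorption technology in an equivariant form'' and ``an equivariant intertwining argument \ldots\ of the sort developed in \cite{gardella classification on Otwo}'' are placeholders, not arguments: no theorem in the cited sources produces an equivariant central embedding of a Rokhlin $G$-$\mathcal{O}_2$ into the fixed-point-continuous central sequence algebra of an arbitrary action with $\cdimRok<\infty$, and this is precisely the missing ingredient. The finite-group case you invoke via \cite{BEMSW} concerns $\Z_2$ and does not supply the general Lie-group step.

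In short: there is no gap relative to the paper, because the paper proves nothing here; but your proposal does not close the conjecture either, and you should present it as a strategy rather than a proof.
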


Example \ref{eg: commuting matters} shows that the corresponding statement for noncommuting towers is in general false.\\
\indent Based on Corollary \ref{cor: discrete K-thy} and the comments and examples after it, we ask:

\begin{qst} If $\alpha\colon G\to\Aut(A)$ is an action of a compact Lie group on a unital \ca\ $A$\ and $\cdimRok(\alpha)<\I$,
does one have $$\min\left\{n\colon I_G^n\cdot K_\ast^G(A)=0\right\}\leq \cdimRok(\alpha)+1 ,$$
or any other relationship between these quantities?\end{qst}

Example \ref{eg: min does not determin dim} shows that one cannot in general expect equality to hold.\\
\ \\
\indent Finally, the following problem is likely to be challenging:

\begin{pbm} Can actions with finite Rokhlin dimension with commuting towers on unital Kirchberg algebras satisfying
the UCT be classified, in a way similar to what was done in \cite{izumi finite group actions II} for finite group actions with the
Rokhlin property, or in \cite{gardella Kirchberg 1} and \cite{gardella Kirchberg 2} for circle actions with the Rokhlin property?\end{pbm}

Some of these questions will be addressed in \cite{GHS}.

\end{document}